\newtheorem{theorem}{Theorem}[section]
\newtheorem{lemma}[theorem]{Lemma}
\newtheorem{thm}{Theorem}
\theoremstyle{definition}
\newtheorem{definition}[theorem]{Definition}
\newtheorem{example}[thm]{Example}
\theoremstyle{remark}
\newtheorem{remark}[theorem]{Remark}
\numberwithin{equation}{section} \theoremstyle{plain}
\newtheorem{corollary}[theorem]{Corollary}
\newtheorem{proposition}[theorem]{Proposition}
\newcommand{\df}{\mathrm{d}}
\newcommand{\Sph}{\mathbb{S}}
\begin{document}
\title[Warped product Einstein manifolds]{On the classification of warped product Einstein
metrics}
\author{Chenxu He}
\address{209 S 33rd St\\
Dept. of Math, University of Pennsylvania\\
Philadelphia, PA 19104.}
\curraddr{14 E. Packer Ave\\
Dept. of Math, Lehigh University \\
Christmas-Saucon Hall\\
Bethlehem, PA, 18015}
\email{he.chenxu@lehigh.edu}
\urladdr{\url{http://sites.google.com/site/hechenxu/}}
\author{Peter Petersen}
\address{520 Portola Plaza\\
Dept of Math UCLA\\
Los Angeles, CA 90095}
\email{petersen@math.ucla.edu}
\urladdr{\url{http://www.math.ucla.edu/~petersen}}
\thanks{The second author was supported in part by NSF-DMS grant 1006677}
\author{William Wylie}
\address{209 S 33rd St \\
Dept. of Math, University of Pennsylvania\\
Philadelphia, PA 19104.}
\email{wylie@math.upenn.edu}
\urladdr{\url{http://www.math.upenn.edu/~wylie}}
\thanks{The third author was supported in part by NSF-DMS grant 0905527 }
\date{}
\subjclass[2000]{53B20, 53C30}

\begin{abstract}
In this paper we take the perspective introduced by Case-Shu-Wei of
studying warped product Einstein metrics through the equation for
the Ricci curvature of the base space. They call this equation on
the base the $m$-Quasi Einstein equation, but we will also call it
the $(\lambda,n+m)$-Einstein equation. In this paper we extend the
work of Case-Shu-Wei and some earlier work of Kim-Kim to allow the
base to have non-empty boundary. This is a natural case to consider
since a manifold without boundary often occurs as a warped product
over a manifold with boundary, and in this case we get some interesting
new canonical examples. We also derive some new formulas involving
curvatures which are analogous to those for the gradient Ricci solitons.
As an application, we characterize warped product Einstein metrics
when the base is locally conformally flat.
\end{abstract}

\maketitle

\section{Introduction}

In this paper a $\left( \lambda ,n+m\right) $-Einstein manifold $(M^n,g,w)$
is a complete Riemannian manifold, possibly with boundary, and a smooth
function $w $ on $M$ which satisfies
\begin{eqnarray}
\mathrm{Hess} w &=& \frac{w}{m} ( \mathrm{Ric} - \lambda g)  \notag \\
w &>&0 \text{ on int}(M)  \label{eqnlambdaEinstein} \\
w &=& 0 \text{ on } \partial M.  \notag
\end{eqnarray}
When $m=1$ we make the additional assumption that $\Delta w = -\lambda w$.

These metrics are also called \emph{$m$-Quasi Einstein} metrics in \cite{CSW}. Our motivation for studying this equation is that, when $m$ is an integer, $(\lambda, n+m)$-Einstein metrics are exactly those $n$-dimensional manifolds which are the base of an $n+m$ dimensional Einstein warped product.

\begin{proposition}
\label{propwarpedproduct} \label{warping} Let $m> 1$ be an integer. Then
there is a smooth $(n+m)$ dimensional warped product Einstein metric $g_E$
of the form
\begin{eqnarray*}
g_E = g + w^2 g_{F^m}
\end{eqnarray*}
if and only if $(M,g,w)$ is a $\left (\lambda, n+m\right)$-Einstein manifold.
\end{proposition}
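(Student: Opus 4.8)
The plan is to reduce everything to the interior $U=\mathrm{int}(M)$, where $w>0$ and $g_E=g+w^{2}g_{F^m}$ is a genuine Riemannian metric on $U\times F$, and to compare the two sides of the Einstein equation $\Ric_{g_E}=\lambda g_E$ block by block. The classical formulas for the curvature of a warped product give, for $X,Y$ tangent to the base and $V,W$ tangent to the fiber,
\begin{align*}
\Ric_{g_E}(X,Y)&=\Ric_{g}(X,Y)-\tfrac{m}{w}\Hess w(X,Y),\\
\Ric_{g_E}(X,V)&=0,\\
\Ric_{g_E}(V,W)&=\Ric_{g_{F^m}}(V,W)-\left(w\,\Delta w+(m-1)|\nabla w|^{2}\right)g_{F^m}(V,W),
\end{align*}
where $\Delta=\Tr\Hess$ and base quantities are pulled back to $U\times F$. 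Thus $\Ric_{g_E}=\lambda g_E$ is equivalent to three statements: the mixed block holds automatically; the base block is precisely the Hessian equation $\Hess w=\tfrac{w}{m}(\Ric_{g}-\lambda g)$ of \eqref{eqnlambdaEinstein}; and the fiber block reads $\Ric_{g_{F^m}}=\mu\,g_{F^m}$, where $\mu:=w\,\Delta w+(m-1)|\nabla w|^{2}+\lambda w^{2}$ is a priori a function on $U$. The ``only if'' direction is then immediate: an Einstein warped product forces the base block, hence the Hessian equation on $U$, which extends to all of $M$ by continuity; nondegeneracy of $g_E$ forces $w$ to have no zeros on the interior (so $w>0$ after a choice of sign), and $w=0$ on $\partial M$ since the fibers $\{p\}\times F$ over $\partial M$ are exactly where the metric degenerates.

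For the ``if'' direction the content is to produce the fiber $F$ and to check smoothness. The key point is the following lemma, which I would establish first: on any $(\lambda,n+m)$-Einstein manifold the function $\mu=w\,\Delta w+(m-1)|\nabla w|^{2}+\lambda w^{2}$ is constant. I would prove this by showing $\nabla\mu\equiv0$. Differentiating gives $\nabla\mu=(\Delta w)\nabla w+w\,\nabla\Delta w+2(m-1)\Hess w(\nabla w,\cdot)+2\lambda w\,\nabla w$. One then substitutes, using: the trace of \eqref{eqnlambdaEinstein}, namely $\Delta w=\tfrac{w}{m}(\scal-n\lambda)$; equation \eqref{eqnlambdaEinstein} itself contracted with $\nabla w$, namely $\Hess w(\nabla w,\cdot)=\tfrac{w}{m}\bigl(\Ric(\nabla w,\cdot)-\lambda\,\nabla w\bigr)$; and the divergence of \eqref{eqnlambdaEinstein}, which combined with the contracted second Bianchi identity $\operatorname{div}\Ric=\tfrac12\nabla\scal$ and the commutation identity $\operatorname{div}\Hess w=\nabla\Delta w+\Ric(\nabla w,\cdot)$ yields an expression for $\nabla\Delta w$ (equivalently for $\nabla\scal$) in terms of $\scal$, $\Ric(\nabla w,\cdot)$, and $\nabla w$ only. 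Feeding all of this into $\nabla\mu$, the terms involving $\scal$, those involving $\Ric(\nabla w,\cdot)$, and those involving $\nabla w$ each cancel separately, giving $\nabla\mu=0$. This bookkeeping with the Bianchi identities is the only real computation; it is short but must be done carefully, so I regard it as the main technical obstacle on the analytic side.

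Granting the lemma, $\mu$ is a constant, and I would then take $F^{m}$ to be a complete simply connected space form with $\Ric_{g_{F^m}}=\mu\,g_{F^m}$ --- the round sphere $\Sph^{m}$ of radius $\sqrt{(m-1)/\mu}$ when $\mu>0$, Euclidean space $\Real^{m}$ when $\mu=0$, and hyperbolic $m$-space when $\mu<0$. Reading the block computation in reverse then shows that $g_E=g+w^{2}g_{F^m}$ satisfies $\Ric_{g_E}=\lambda g_E$ on $U\times F$, so it only remains to see that $g_E$ completes to a smooth (and complete) metric over $\partial M$. Over $U$ this is clear since $w>0$; across $\partial M$ one collapses the fibers $\{p\}\times F$ for $p\in\partial M$ and checks that the resulting metric is smooth, using that $w=0$ and $\Hess w=0$ on $\partial M$ (both forced by \eqref{eqnlambdaEinstein}) and that the constant $\mu=(m-1)|\nabla w|^{2}$ on $\partial M$ matches the normal derivative of $w$ to the radius of the round-sphere fiber, via the standard smooth-closing criterion for warped products near a collapsing fiber (equivalently, $w$ extended oddly across $\partial M$ has the correct Taylor behavior). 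Since this boundary regularity is precisely the new feature beyond the boundaryless work of Case--Shu--Wei and Kim--Kim, I expect it --- rather than the interior computation --- to be the most delicate part of the argument.
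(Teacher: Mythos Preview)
Your approach is essentially the paper's: block-decompose $\Ric_{g_E}$, identify the base block with the Hessian equation, prove $\mu$ is constant (the paper simply cites Kim--Kim; you sketch the Bianchi computation), choose an Einstein fiber with $\Ric_{g_F}=\mu g_F$, and verify smooth closing at $\partial M$ via the identity $\mu=(m-1)|\nabla w|^{2}$ there.

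The one step the paper isolates and you gloss over is that $|\nabla w|\neq 0$ on $\partial M$. The paper proves this separately (Proposition~\ref{regval}) by an ODE argument along a geodesic normal to $\partial M$: the Hessian equation becomes $h''=\tfrac{1}{m}\Theta(t)h$ with $h(0)=0$, so $h'(0)=0$ would force $h\equiv 0$, contradicting $w>0$ in the interior. This is exactly what gives $\mu>0$ (hence $F=\Sph^m$) whenever $\partial M\neq\emptyset$. Without it your trichotomy on the sign of $\mu$ does not by itself rule out $\mu\leq 0$ in the presence of boundary, and then neither the Euclidean nor the hyperbolic fiber can close up smoothly; your boundary paragraph implicitly assumes the sphere case but never justifies it. Once you insert this short ODE step, your argument and the paper's coincide.
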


\begin{remark}
The $m=1$ case is discussed in \cite{Corvino} and the case where $M$ has no
boundary was proved in \cite{KimKim}.
\end{remark}

A simple example of a $(\lambda, n+m)$-Einstein metric is when $w$ is constant. Then $\mathrm{Ric} = \lambda g$ and $\partial M = \emptyset$, and we call the space a $\lambda$-Einstein manifold. Note that a $\lambda$-Einstein manifold is $(\lambda, n+m)$- Einstein for all $m\geq 1$ and the
warped product is a Riemannian product. In this case we say the space is a \emph{trivial} $(\lambda, n+m)$-Einstein manifold.

By varying the parameter $m$, the $(\lambda, n+m)$-Einstein equation formally interpolates between two well known equations. $(\lambda, n+1)$-Einstein metrics are more commonly called \emph{static metrics}. Static metrics have been studied extensively for their connections to scalar
curvature, the positive mass theorem, and general relativity. See for example \cite{Anderson}, \cite{Corvino}, \cite{AndersonKhuri1}, \cite{AndersonKhuri2}. Note that, from the relativity perspective, it is natural to assume in addition that the metric is asymptotically flat see e.g \cite{Israel}, \cite{Robinson}, \cite{BuntingMasood}, however we will not consider that assumption in this paper. The more general question of Einstein metrics that are submersion metrics on a fixed bundle over a fixed Riemannian manifold has also been studied extensively in the physics
literature where they are called Kaluza-Klein metrics. The equations can be found in Chapter 9 of \cite{Besse}, also see \cite{Betounes}, \cite{CGS}, and \cite{OverduinWesson}. For many other related results about warped product metrics also see \cite{DobarroUenal2010} and the references therein.

On the other hand, if we define $f$ in the interior of $M$ by $e^{-f/m} = w$, then the $(\lambda, n+m)$-Einstein equation becomes
\begin{eqnarray*}
\mathrm{Ric}_f^m = \mathrm{Ric} + \mathrm{Hess } f - \frac{\mathrm{d}
f\otimes \mathrm{d} f}{m} = \lambda g,
\end{eqnarray*}
$\mathrm{Ric}_f^m$ is sometimes called the \emph{$m$-Bakry Emery tensor}. Lower bounds on this tensor are related to various comparison theorems for the measure $e^{-f} \df \mathrm{vol}_g$, see for example Part II of \cite{Villani}, and \cite{Bayle}, \cite{Morgan}, \cite{WeiWylie}. From these
comparison theorems, the $(\lambda, n+m)$-Einstein equation is the natural Einstein condition of having constant $m$-Bakry-Emery Ricci tensor. Taking $m \longrightarrow \infty$, one also obtains the gradient Ricci soliton equation
\begin{eqnarray*}
\mathrm{Ric} + \mathrm{Hess }f = \lambda g.
\end{eqnarray*}
We could then also call a gradient Ricci soliton a $(\lambda, \infty)$-Einstein manifold. Ricci solitons have also been studied because of their connection to Ricci flow (See for example \cite{Chow}, Chapter 1).

In this paper we develop some new equations for $(\lambda, n+m)$-Einstein
metrics which are analogous to formulas for gradient Ricci solitons.
Interestingly, this analogy with gradient Ricci solitons works very
well when $m>1$, but seems to break down when $m=1$. For a different
connection between Ricci solitons and static metrics see \cite{AkbarWoolgar}.

As mentioned above, when $\partial M = \emptyset$, $(\lambda, n+m)$-Einstein metrics have been studied in \cite{CSW} and \cite{KimKim}. In this paper these results will be extended to the case where $\partial M \neq \emptyset$. For example, we have the following extension of Theorem 1 in \cite{KimKim}.

\begin{theorem}
\label{thmKimKim} \label{compact} If $(M,g,w)$ is a compact $(\lambda, n+m)$-Einstein manifold with $\lambda \leq 0$ then it is trivial.
\end{theorem}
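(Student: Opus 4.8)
The plan is to integrate the trace of the fundamental equation over $M$ and use the boundary conditions to force $w$ to be constant. Taking the trace of $\Hess w = \frac{w}{m}(\Ric - \lambda g)$ gives $\lap w = \frac{w}{m}(\scal - n\lambda)$. To use this I first need control on $\scal$. A standard computation for $(\lambda,n+m)$-Einstein metrics — which one derives by taking the divergence of the fundamental equation and commuting derivatives, exactly as in \cite{CSW} — yields an identity of the form $w\,\df\scal = (\text{something})\cdot \df w$, or more usefully the Bochner-type relation
\begin{equation*}
w\lap w + (m-1)|\nabla w|^2 = \frac{w^2}{m}(\scal - n\lambda) + (m-1)|\nabla w|^2,
\end{equation*}
but the cleaner route is to recall (again from \cite{CSW}, and provable directly) that $\scal + \frac{m}{w}\lap w = $ something that, combined with the auxiliary conserved quantity $\mu = \lambda w^2 + (m-1)|\nabla w|^2 + \frac{w}{m}(\ldots)$, is constant. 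In any case the key pointwise consequence is an equation of the schematic form $\lap w = -\lambda w + (\text{nonpositive or controllable terms})/w$.

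Concretely, the cleanest approach: multiply $\lap w = \frac{w}{m}(\scal - n\lambda)$ by $w$ and integrate; or better, use that the function $v = w^{?}$ satisfies a maximum-principle-friendly equation. But the most robust plan avoiding case analysis is the following integral argument. From the fundamental equation, $w\,\Ric = m\,\Hess w + \lambda w\, g$. Contract with $g$ and integrate against the right weight: consider $\int_M \left(w\lap w\right)$. By the divergence theorem, $\int_M w\lap w = -\int_M |\nabla w|^2 + \int_{\partial M} w\,\partial_\nu w = -\int_M|\nabla w|^2$, since $w=0$ on $\partial M$. On the other hand $\int_M w\lap w = \frac{1}{m}\int_M w^2(\scal - n\lambda)$. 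So $\frac1m\int_M w^2(\scal - n\lambda) = -\int_M|\nabla w|^2 \leq 0$. Now I need to show the left side is $\geq 0$ when $\lambda \leq 0$, which requires a lower bound $\scal \geq n\lambda$. That bound should follow from another integrated identity: multiplying the trace equation suitably and using that $\int_M \lap(\text{something}) = \int_{\partial M}(\ldots)$ with the boundary terms having a sign, or invoking the known fact (Case--Shu--Wei) that on a $(\lambda,n+m)$-Einstein manifold $\scal \geq n\lambda$ pointwise with equality characterizing triviality — for $\lambda \le 0$ this is exactly the sort of elliptic-comparison statement that the $m$-Bakry--Émery formulation supplies. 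Granting $\scal \geq n\lambda$, we get $\int_M|\nabla w|^2 \leq 0$, hence $w$ is constant, hence (since $w=0$ on $\partial M$ would force $w\equiv 0$, which is excluded) $\partial M = \emptyset$ and $\Ric = \lambda g$: the manifold is trivial.

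The main obstacle is establishing the scalar curvature lower bound $\scal \geq n\lambda$ (equivalently, pinning down the sign in whatever Bochner/maximum-principle identity replaces it) together with handling the boundary terms correctly — a priori $\partial_\nu w$ on $\partial M$ is nonzero, and one must verify that every boundary integral that arises either vanishes because $w=0$ there or has a favorable sign; the $m=1$ case will need the extra hypothesis $\lap w = -\lambda w$ fed in separately, since the generic argument degenerates there. I would organize the proof by first recording the auxiliary curvature identities for $(\lambda,n+m)$-Einstein metrics (the analogues of the soliton identities promised in the introduction), then deploying the maximum principle on the interior to get $\scal \geq n\lambda$, and finally running the integration-by-parts argument above to conclude $\nabla w \equiv 0$.
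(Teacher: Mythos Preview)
Your concrete route --- integrate $w\Delta w$ to obtain
\[
\frac{1}{m}\int_M w^2(\mathrm{scal}-n\lambda)\,\df\mathrm{vol} = -\int_M |\nabla w|^2\,\df\mathrm{vol},
\]
and then close the argument with a pointwise bound $\mathrm{scal}\geq n\lambda$ --- has a genuine gap at exactly the place you flag. The bound $\mathrm{scal}\geq n\lambda$ is \emph{not} a known fact from Case--Shu--Wei in the generality you need. What is available (here and in \cite{CSW}) is $\mathrm{tr}(P)\geq 0$, i.e.\ $\mathrm{scal}\geq \frac{n(n-1)}{n+m-1}\lambda$, and the maximum-principle proof of that inequality uses that $\mathrm{tr}(P)<0$ forces $\lambda-\mathrm{scal}/n>0$. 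One checks the identity
\[
(m-1)\,\mathrm{tr}(P) + n(m+n-1)\Bigl(\lambda-\tfrac{\mathrm{scal}}{n}\Bigr) = nm\lambda,
\]
so this implication holds for $\lambda\geq 0$ but fails for $\lambda<0$. Thus the very tool you want to invoke to get $\mathrm{scal}\geq n\lambda$ is unavailable precisely in the case $\lambda<0$ that you are trying to rule out. (And for compact $\lambda>0$ examples the paper later shows $\int_M(\mathrm{scal}-n\lambda)\,\df\mu_m<0$, so the pointwise inequality is genuinely false in that regime --- there is no universal $\mathrm{scal}\geq n\lambda$.)

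The paper bypasses scalar curvature entirely. Kim--Kim's identity $\mu = w\Delta w + (m-1)|\nabla w|^2 + \lambda w^2$ is rewritten as an eigenvalue equation for the weighted Laplacian $L_{m-2}(u)=\Delta u + (m-2)w^{-1}g(\nabla u,\nabla w)$:
\[
L_{m-2}(\phi)=-2\lambda\phi, \qquad \phi = \begin{cases} w^2-\mu/\lambda & \lambda\neq 0,\\ w^2 & \lambda=0,\end{cases}
\]
where for $\lambda=0$ one first shows $\mu=0$ on a compact $M$. Integrating $\phi\cdot L_{m-2}(\phi)$ against $w^{m-2}\df\mathrm{vol}$ and checking that the boundary term $w^{m-2}\phi\nabla\phi = 2w^{m-1}\phi\nabla w$ vanishes on $\partial M$ (trivially for $m>1$; using $\mu=0$ hence $\phi=w^2$ for $m=1$) yields
\[
\int_M |\nabla\phi|^2\,w^{m-2}\,\df\mathrm{vol} = 2\lambda \int_M \phi^2\,w^{m-2}\,\df\mathrm{vol}.
\]
This forces $\lambda>0$ unless $\nabla\phi\equiv 0$, i.e.\ $w$ is constant. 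Your aside ``use that $v=w^{?}$ satisfies a maximum-principle-friendly equation'' is in fact pointing at this; the missing ingredients are the specific choice $\phi=w^2-\mu/\lambda$, the weight $w^{m-2}$, and the observation that $\mu$ (not $\mathrm{scal}$) is the constant that makes the boundary term cooperate.
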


\begin{remark}
Another interesting result from \cite{CSW} is a classification of K\"{a}hler $(\lambda, n+m)$-Einstein metrics for finite $m$. Their arguments also give a classification when $\partial M \neq \emptyset$, see Remark \ref{remKah} below. Other extensions of results from \cite{CSW} are discussed in section 5.
\end{remark}

The main part of this paper is to introduce new formulas for $(\lambda, n+m)$-Einstein metrics. As an application, we obtain a classification of complete locally conformally flat $(\lambda, n+m)$-Einstein manifolds with $m>1$. The proof is motivated by the corresponding result for gradient Ricci solitons proven independently in \cite{CaoChen} and \cite{CatinoMantegazza}.

\begin{theorem}
\label{thmharmonic-Weyl} Let $m>1$ and suppose that $(M,g)$ is complete,
simply connected, and has harmonic Weyl tensor and $W(\nabla w,\cdot ,\cdot
,\nabla w)=0$, then $(M,g,w)$ is a non-trivial $(\lambda, n+m)$-Einstein
metric if and only if it is of the form
\begin{eqnarray*}
g &=& \mathrm{d}t^2 + \psi^2(t)g_L \\
w &=& w(t),
\end{eqnarray*}
where $g_L$ is an Einstein metric. Moreover, if $\lambda \geq 0$ then $(L,g_{L})$ has non-negative Ricci curvature, and if it is Ricci flat, then $\psi$ is a constant, i.e., $(M,g)$ is a Riemannian product.
\end{theorem}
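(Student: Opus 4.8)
The plan is to prove the substantive direction: under the stated hypotheses, a non-trivial $(\lambda,n+m)$-Einstein structure must have the warped form $g=\mathrm{d}t^{2}+\psi^{2}(t)g_{L}$, $w=w(t)$, with $g_{L}$ Einstein. (The reverse direction, together with the fact that such metrics satisfy $W(\nabla w,\cdot,\cdot,\nabla w)=0$ and have harmonic Weyl tensor, is a direct verification once one writes down the ODE system for $(\psi,w)$ forced by the $(\lambda,n+m)$-Einstein equation.) Since non-triviality means $w$ is non-constant, $U=\{\nabla w\neq 0\}$ is a non-empty open set; I would first produce the warped structure on $U$ and then extend it to all of $M$ using completeness and simple connectivity.

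The starting point is the curvature identities for $(\lambda,n+m)$-Einstein metrics developed earlier in the paper. Differentiating $\Hess w=\tfrac{w}{m}(\Ric-\lambda g)$ once more, commuting covariant derivatives by the Ricci identity applied to $\mathrm{d}w$, and eliminating $\nabla\scal$ via the scalar-curvature formula, produces an identity of the shape
\[
\tfrac{w}{m}\,C_{ijk}=-\,W_{ijkl}\nabla^{l}w+A_{ijk},
\]
where $C$ is the Cotton tensor of $g$ and $A_{ijk}$ is an explicit tensor, linear in $\nabla w$, with coefficients built from $\Ric$, $\scal$ and $g$ (schematically of type $R_{jk}\nabla_{i}w-R_{ik}\nabla_{j}w$ and $g_{jk}\nabla_{i}w-g_{ik}\nabla_{j}w$). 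By the contracted second Bianchi identity, the harmonic Weyl hypothesis gives $C\equiv 0$ (in low dimensions this vanishing is precisely the local conformal flatness that is in play), so $W_{ijkl}\nabla^{l}w=A_{ijk}$ on all of $M$.

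Now I would bring in the hypothesis $W(\nabla w,\cdot,\cdot,\nabla w)=0$. Contracting $W_{ijkl}\nabla^{l}w=A_{ijk}$ against $\nabla^{i}w$ kills the left-hand side, because the Weyl symmetries move the two copies of $\nabla w$ into the first and fourth slots. The surviving relation reads, schematically, $|\nabla w|^{2}R_{jk}=(\text{terms of type }(\Ric(\nabla w))_{j}\nabla_{k}w,\ \nabla_{j}w\,\nabla_{k}w,\ g_{jk}\Ric(\nabla w,\nabla w))$; its left side is symmetric while the only piece on the right that is not manifestly symmetric is $(\Ric(\nabla w))_{j}\nabla_{k}w$, so symmetrizing and contracting once more with $\nabla^{j}w$ forces $\Ric(\nabla w)=\mu\,\nabla w$ for a function $\mu$, and feeding this back yields
\[
\Ric=\mu\,\frac{\mathrm{d}w\otimes\mathrm{d}w}{|\nabla w|^{2}}+a\Big(g-\frac{\mathrm{d}w\otimes\mathrm{d}w}{|\nabla w|^{2}}\Big)\qquad\text{on }U
\]
for functions $\mu,a$. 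Moreover $\nabla|\nabla w|^{2}=2\Hess w(\nabla w,\cdot)=\tfrac{2w}{m}(\mu-\lambda)\,\nabla w$ is parallel to $\nabla w$, so $|\nabla w|$ — hence also $\mu$ and $a$ — is constant on each connected component of a level set of $w$.

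Finally comes the warped-product recognition and the globalization. On a component of $U$ put $\mathrm{d}t=|\nabla w|^{-1}\mathrm{d}w$; then $\partial_{t}=\nabla w/|\nabla w|$ is a unit geodesic field, and the level hypersurfaces of $w$ have second fundamental form $\Hess w|_{T\Sigma}/|\nabla w|=\tfrac{w(a-\lambda)}{m|\nabla w|}\,g|_{T\Sigma}$, so they are totally umbilic with umbilicity factor constant along each leaf. By the classical splitting criterion this gives $g=\mathrm{d}t^{2}+\psi(t)^{2}g_{L}$ with $w=w(t)$; substituting back into $\Hess w=\tfrac{w}{m}(\Ric-\lambda g)$ and reading off the fibre component shows $\Ric_{g_{L}}$ is a $t$-independent multiple of $g_{L}$, so $g_{L}$ is Einstein, say $\Ric_{g_{L}}=\rho\,g_{L}$. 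Completeness and simple connectivity then upgrade this to a global warped product $M=I\times_{\psi}L$, with $I$ an interval, half-line, $\mathbb{R}$, or $\mathbb{R}$ with one or two endpoints adjoined where $w$ is critical and the umbilic foliation closes up smoothly — exactly as in the locally conformally flat gradient Ricci soliton arguments of \cite{CaoChen} and \cite{CatinoMantegazza}. The remaining assertions follow from the ODE system for $(\psi,w)$: the fibre equation is (schematically) $\rho=a\psi^{2}+\psi\psi''+(\dim L-1)(\psi')^{2}$, and combining it with the equation for $w$, completeness, and $\lambda\geq 0$ forces $\rho\geq 0$; when $\rho=0$ the $\psi$-equation is linear, so $\psi''\equiv 0$, and keeping $\psi>0$ on a complete base forces $\psi$ constant, i.e.\ $(M,g)$ is a Riemannian product. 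The main obstacle is the bookkeeping behind the displayed Cotton-tensor identity — pinning down its coefficients and the role of $m$, and then squeezing the pointwise Ricci structure out of that contracted identity — together with the global extension of the warped structure across the critical points of $w$, which is exactly where completeness and simple connectivity are essential.
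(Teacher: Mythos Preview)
Your local argument for the warped structure is sound in outline and follows the Cotton-tensor route of \cite{Catinoetc} rather than the paper's own $P$- and $Q$-tensor machinery. The paper instead defines an algebraic curvature tensor $Q$ satisfying $\mathrm{div}(w^{m+1}Q)=0$, decomposes it as $Q=W+\tfrac{2(n+m-2)}{m(n-2)}P\odot g-\tfrac{n+m-2}{m(n-1)(n-2)}\Tr(P)\,g\odot g$, and combines the harmonic-Weyl identity for $Q(\cdot,\cdot,\cdot,\nabla w)$ with $W(\nabla w,\cdot,\cdot,\nabla w)=0$ to read off the two-eigenvalue structure of $P$ directly; your contraction of $W_{ijkl}\nabla^{l}w=A_{ijk}$ against $\nabla^{i}w$ achieves the same end. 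Either route yields the local warped form, and the paper's globalization (patching along geodesics, invoking analyticity to exclude open sets where $\mathrm{d}w=0$ or where the two Schouten eigenvalues coincide) is more detailed than your sketch but in the same spirit.

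The genuine gap is in your treatment of the sign of the Einstein constant of $L$ when $\lambda\ge 0$. The claim that ``combining [the fibre equation] with the equation for $w$, completeness, and $\lambda\ge 0$ forces $\rho\ge 0$'' is not justified by ODE manipulation alone, and the assertion that ``when $\rho=0$ the $\psi$-equation is linear, so $\psi''\equiv 0$'' is incorrect: with $\rho=0$ and $\lambda=0$ the fibre equation reads $m\tfrac{w'}{w}\tfrac{\psi'}{\psi}=-\tfrac{\psi''}{\psi}-(n-2)\bigl(\tfrac{\psi'}{\psi}\bigr)^{2}$, which is neither linear nor forces $\psi''=0$. The paper's argument here is substantially different and uses comparison geometry. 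For $\rho<0$ one replaces $L$ by a hyperbolic space of the same Einstein constant (legitimate since only the constant enters the ODE system) and applies the Bakry--Qian weighted volume estimate for $\Ric_{f}^{m}\ge 0$, which bounds the growth of $w^{m}\,\df\mathrm{vol}_{g}$ by a polynomial of degree at most $n+m$; this contradicts the exponential volume growth coming from the hyperbolic factor. For $\rho=0$ with $\partial M=\emptyset$ one replaces $L$ by $\Real^{n-1}$ and invokes the Fang--Li--Zhang splitting theorem for the $m$-Bakry--\'Emery tensor to conclude $\psi$ is constant; the boundary case requires a separate phase-plane analysis of the system for $x=\psi'/\psi$, $y=w'/w$ showing that no trajectory other than $x\equiv 0$ satisfies the required boundary asymptotics. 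None of these ingredients is supplied by the ODE linearity you appeal to.
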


\begin{remark}
This global result is obtained by applying a similar local characterization. If one removes the complete and simply connected assumption, we still get a characterization of $(M,g,w)$ around certain points in $M$, see Theorem \ref{ThmLocal}. If one only assumes completeness, then we can apply the theorem to the universal cover. As quotients of the universal cover must preserve $w$, the possible non-simply connected spaces are quite restricted. One advantage of having the arguments be local is that it is then also possible to obtain a classification in the more general case where $M$ is a Riemannian orbifold. In fact it follows that, unless $\psi$ is positive everywhere, than the orbifold is a finite quotient of a $(\lambda,n+m)$-Einstein metric on the
sphere, disc or Euclidean space.
\end{remark}

\begin{remark}
Even when $\lambda >0$ there are interesting examples of rotationally
symmetric $(\lambda, n+m)$-Einstein metrics, see \cite{Bohm1}. This is in
sharp contrast to the gradient Ricci soliton case where there are no
unexpected examples \cite{Kotschwar}.
\end{remark}

\begin{remark}
In dimension three harmonic Weyl tensor is equivalent to local conformal flatness. This assumption cannot be weakened when $n=3$ as there are local examples which are not warped products (see Example \ref{exa:3DnonCF}). However, when $n>3$, harmonic Weyl tensor is a weaker condition
since it is equivalent to divergence free Weyl tensor while local conformal flatness is equivalent to vanishing Weyl tensor. For simple examples of $(\lambda ,n+m)$-Einstein metrics which have divergence free Weyl tensor and zero radial Weyl tensor but are not locally conformally flat consider the examples listed in table \ref{TablekappaEinstein} where N and F are Einstein
metrics which do not have constant sectional curvature. Such examples exist for $n\geq 5$.
\end{remark}

\begin{remark}
When $n=1$ and $2$ a classification of $(\lambda ,n+m)$-Einstein manifolds can be found in \cite{Besse}, we will discuss these examples in section 3 and Appendix A. See \cite{Seshadri} for the case where $n=3$, $m=1$, and the metrics have symmetry.
\end{remark}

\begin{remark}
The local characterization of locally conformally flat $(\lambda, n+m)$
Einstein metrics has also been obtained independently by Catino, Mantegazza,
Mazzieri, and Rimoldi in a recent paper \cite{Catinoetc}. They posted their
preprint on the arxiv in October 2010, as we were finalizing this
manuscript. Both their work and ours has been motivated by the corresponding
results for gradient Ricci solitons. Whereas they give a different proof of
the result that also gives a new and interesting approach in the Ricci
soliton case, our approach was to set up a system of formulas on $(\lambda,
n+m)$-Einstein metrics which are analogous to the formulas for gradient
Ricci solitons. These calculations use two tensors $Q$ and $P$ which we
introduce in section 6. We will give other applications of these
calculations in \cite{HPWvirtual} and \cite{HPWrigidity}.
\end{remark}

In this paper we also modify Case-Shu-Wei's classification of $(\lambda,
n+m) $-Einstein manifolds which are also $\kappa$-Einstein for $\kappa \neq
\lambda$, see Proposition \ref{propkappaEinstein}. In \cite{HPWvirtual} we
consider solutions to the $(\lambda, n+m)$-Einstein equation where $w$ can
change sign. In \cite{HPWrigidity} we consider $(\lambda, n+m)$-Einstein
manifolds with constant scalar curvature and classify such manifolds under
certain additional curvature conditions.

\smallskip

The paper is organized as follows. In section 2, we study the properties of
the boundary and prove Proposition \ref{propwarpedproduct}. In section 3, we
discuss some examples including the classification in lower dimensions. In
sections 4 and 5 we discuss the modifications of the results in \cite{KimKim}
and \cite{CSW} to allow $\partial M \neq \emptyset.$ In section 6, we
develop some new formulas for $(\lambda, n+m)$-Einstein manifolds. In
section 7, we apply these formulas to prove Theorem \ref{thmharmonic-Weyl}.
Finally, in the appendix we have also included a sketch of the classification
on $(\lambda, m+2)$-Einstein manifolds which is also outlined in \cite{Besse}.

\medskip

\textbf{Acknowledgment:} The authors would like to thank Esther Cabezas
Rivas, Huai-Dong Cao, Jeffrey Case, Dan Knopf, Guofang Wei, and Wolfgang
Ziller for enlightening conversations and helpful suggestions which helped
us with our work.

\section{Properties of the boundary and the warping construction}

In this section we collect some simple facts about the behavior of $w$ near
the boundary of a $\left( \lambda ,m+n\right) $-Einstein manifold. When $m=1$
all of these results can be found in section 2 in \cite{Corvino}. The proofs
when $m>1$ are similar but we include them for completeness. We then apply
these facts about $\partial M$ to prove Proposition \ref{propwarpedproduct}.
Throughout this section we will let $(M,g,w)$ be a non-trivial $(\lambda,
n+m)$-Einstein manifold, i.e., $w$ is not a constant function.

The following formula is proven by Kim-Kim using a local calculation
involving the Bianchi identities.

\begin{proposition}
\cite[Proposition 5]{KimKim} \label{propmu} There is a constant $\mu $ such
that
\begin{equation*}
\mu =w\Delta w+(m-1)|\nabla w|^{2}+\lambda w^{2}.
\end{equation*}
\end{proposition}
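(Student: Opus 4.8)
The plan is to show that the function $\mu := w\Delta w+(m-1)|\nabla w|^{2}+\lambda w^{2}$ has vanishing gradient, so that it is constant on the connected manifold $M$. First I would compute $\nabla \mu$ directly. Differentiating gives
$$\nabla \mu = (\Delta w)\nabla w + w\,\nabla(\Delta w) + (m-1)\nabla|\nabla w|^{2} + 2\lambda w\,\nabla w.$$
To handle $\nabla(\Delta w)$ and $\nabla|\nabla w|^{2} = 2\,\Hess w(\nabla w,\cdot)$, I would use the defining equation $\Hess w = \frac{w}{m}(\Ric - \lambda g)$. Taking the trace of this equation yields $\Delta w = \frac{w}{m}(\scal - n\lambda)$, and then the commuted Bochner-type identity (or a direct divergence computation) expresses $\nabla(\Delta w)$ in terms of $\Ric(\nabla w,\cdot)$, $\nabla \scal$, and lower-order terms. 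The key input is the contracted second Bianchi identity $\mathrm{div}\,\Ric = \tfrac12 \nabla\scal$, which is how Kim-Kim presumably "use the Bianchi identities." I would also take the divergence of $\Hess w = \frac{w}{m}(\Ric-\lambda g)$ and use $\mathrm{div}(\Hess w) = \nabla(\Delta w) + \Ric(\nabla w,\cdot)$ to get a second relation among the same quantities.

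The main obstacle I expect is the careful bookkeeping that makes all the curvature terms ($\scal$, $\nabla\scal$, $\Ric(\nabla w,\cdot)$) cancel. The strategy is: from $w\cdot\mathrm{div}(\Hess w)$ expand using the Ricci identity to produce $w\nabla(\Delta w) + w\Ric(\nabla w,\cdot)$; on the other hand $\mathrm{div}(w\cdot\text{stuff})$ of the right-hand side $\frac{w}{m}(\Ric-\lambda g)$ produces $\frac{1}{m}\bigl(\Ric(\nabla w,\cdot) - \lambda\nabla w\bigr) + \frac{w}{m}\bigl(\tfrac12\nabla\scal\bigr)$ after using the contracted Bianchi identity. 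Matching these and then substituting into $\nabla\mu$, together with the substitution $\Hess w(\nabla w,\cdot) = \frac{w}{m}(\Ric(\nabla w,\cdot) - \lambda\nabla w)$ for the $(m-1)\nabla|\nabla w|^2$ term and $\Delta w = \frac{w}{m}(\scal-n\lambda)$ for the first term, the terms involving $\scal$ and $\Ric(\nabla w,\cdot)$ should all drop out, leaving $\nabla\mu = 0$.

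Once $\nabla\mu \equiv 0$ on the connected manifold $M$ (note $\mathrm{int}(M)$ is connected and dense, and $\mu$ extends continuously to $\partial M$), we conclude $\mu$ is a constant. One should double-check the borderline case $m=1$: there the extra hypothesis $\Delta w = -\lambda w$ is imposed, and indeed in that case $\mu = w(-\lambda w) + 0 + \lambda w^2 = 0$ identically, which is consistent. Since the excerpt attributes this to Kim-Kim's local Bianchi computation and the equation is purely local in $\mathrm{int}(M)$, the proof is just this computation carried out on the interior; the behavior at $\partial M$ is irrelevant to the constancy and only used to identify the structure there later.
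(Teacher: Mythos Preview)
Your proposal is correct and matches what the paper describes: the paper does not actually give a proof of this proposition but simply cites Kim--Kim, noting that it is ``proven by Kim-Kim using a local calculation involving the Bianchi identities.'' Your outline---take the divergence of $\Hess w = \tfrac{w}{m}(\Ric - \lambda g)$, use the Ricci commutation $\mathrm{div}(\Hess w) = \nabla(\Delta w) + \Ric(\nabla w,\cdot)$ and the contracted Bianchi identity $\mathrm{div}\,\Ric = \tfrac{1}{2}\nabla\scal$, then substitute into $\nabla\mu$ and watch the curvature terms cancel---is precisely that local Bianchi computation.
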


\begin{remark}
The constant $\mu$ is the Ricci curvature of the fiber $F$ of the warped
product Einstein metric over $M$. When $m=1$, the extra condition $\Delta w
= - \lambda w$ is equivalent to $\mu=0$, which is necessary for the
existence of a one dimensional $F$.
\end{remark}

\begin{remark}
By tracing the $(\lambda, n+m)$-Einstein equation, we have
\begin{equation*}
\Delta w = \frac{w}{m} (\mathrm{scal} - n \lambda),
\end{equation*}
and then we can re-write the equation for $\mu$ as
\begin{eqnarray}
\mu &=& k w^2 + (m-1) |\nabla w|^2  \label{eqnmu} \\
k &=& \frac{ \mathrm{scal}+(m-n)\lambda}{m}.  \notag
\end{eqnarray}
\end{remark}

\begin{remark}
There is a similar identity on gradient Ricci soltions,
\begin{equation*}
\mathrm{scal}+|\nabla f|^{2}-2\lambda f=\mbox{const}.
\end{equation*}
\end{remark}

The first fact about $\partial M$ we are after is the following proposition.

\begin{proposition}
\label{regval} $|\nabla w| \neq 0$ on $\partial M$.
\end{proposition}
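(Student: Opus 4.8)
The plan is to argue by contradiction: suppose $p \in \partial M$ with $\nabla w(p) = 0$. Since $w = 0$ on $\partial M$ and $w > 0$ in the interior, $\partial M$ is a level set of $w$, so $\nabla w$ is automatically normal to $\partial M$; the hypothesis is that it vanishes at $p$. I would first exploit Proposition \ref{propmu}: evaluating $\mu = w\Delta w + (m-1)|\nabla w|^2 + \lambda w^2$ at $p$ gives $\mu = 0$, so in fact $w\Delta w + (m-1)|\nabla w|^2 + \lambda w^2 \equiv 0$ on all of $M$. In particular, along $\partial M$ where $w = 0$, this forces $|\nabla w|^2 \equiv 0$ on $\partial M$ (when $m > 1$), i.e. $\nabla w$ vanishes identically along the boundary — so it is enough to derive a contradiction from a single such point, and conversely a single bad point propagates. (For the $m = 1$ case one uses $\mu = 0$ directly from the extra assumption $\Delta w = -\lambda w$, and $w\Delta w + \lambda w^2 = 0$ gives no information at $p$, so one must argue slightly differently — see below.)

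The heart of the argument is a second-order analysis at $p$. Restrict the $(\lambda, n+m)$-Einstein equation $\Hess w = \frac{w}{m}(\Ric - \lambda g)$: at $p$ we have $w(p) = 0$, hence $\Hess w (p) = 0$. Combined with $\nabla w(p) = 0$, the function $w$ has a critical point at $p$ with vanishing Hessian. The idea is to run an ODE/Taylor-expansion argument: consider a unit-speed geodesic $\gamma(t)$ emanating from $p$ into the interior of $M$ (say, in the inward normal direction), and set $\varphi(t) = w(\gamma(t))$. Then $\varphi(0) = 0$, $\varphi'(0) = \langle \nabla w, \gamma'\rangle = 0$, and $\varphi''(0) = \Hess w(\gamma', \gamma') = 0$. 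Differentiating the trace of the Einstein equation, $\Delta w = \frac{w}{m}(\scal - n\lambda)$, and restricting to $\gamma$, one sees that $\varphi$ satisfies a second-order ODE of the form $\varphi'' = (\text{bounded function})\cdot \varphi$ along $\gamma$ (more precisely the full equation $\Hess w = \frac{w}{m}(\Ric - \lambda g)$ shows every second derivative of $w$ is $w$ times something bounded near $p$). By the uniqueness theorem for linear ODEs with the initial data $\varphi(0) = \varphi'(0) = 0$, we conclude $\varphi \equiv 0$ along $\gamma$, hence $w \equiv 0$ on a neighborhood, contradicting $w > 0$ in the interior and non-triviality. I expect this ODE-uniqueness step — setting it up cleanly so that the coefficient is genuinely bounded near $p$ despite $w$ appearing in a denominator nowhere, and handling it along all directions (or a suitable family of curves) rather than one geodesic — to be the main technical obstacle.

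An alternative, perhaps cleaner route avoiding ODEs: use $\mu = 0$ together with the formula \eqref{eqnmu}, $\mu = kw^2 + (m-1)|\nabla w|^2$ where $k = \frac{\scal + (m-n)\lambda}{m}$. From $\mu = 0$ we get $(m-1)|\nabla w|^2 = -kw^2$, so $|\nabla w|^2 / w^2 = -k/(m-1)$ wherever $w \neq 0$; as $x \to p$ along the interior this would force $k \to -\infty$ unless $|\nabla w|$ vanishes to the same order as $w$. Since $\scal$ is continuous (hence $k$ bounded) near $p$, we get $|\nabla w| \le C|w|$ near $p$; feeding this into the gradient flow of $w$ (or a Gronwall estimate along integral curves of $\nabla w$) shows $w$ cannot leave $0$, the desired contradiction. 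I would present the contradiction argument in whichever of these two forms is shortest given what has already been established, and I would treat $m > 1$ and $m = 1$ in parallel, noting that in the $m=1$ case the condition $\Delta w = -\lambda w$ is exactly what makes $\mu = 0$ and keeps $k$ (equivalently $\scal$) controlled so that the same estimate $|\nabla w| \le C|w|$ goes through.
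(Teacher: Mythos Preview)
Your ODE approach along a geodesic is correct and is exactly the paper's proof, but you are overcomplicating it. The preliminary discussion of $\mu$ is unnecessary (indeed, in the paper the implication runs the other way: $\mu>0$ on $\partial M$ is deduced \emph{from} this proposition as Corollary~\ref{cormuboundary}), and there is no real technical obstacle in the ODE step. Along any unit-speed geodesic $\gamma$ from $p$ into the interior, the function $h(t)=w(\gamma(t))$ satisfies
\[
h''(t)=\Hess w(\gamma',\gamma')=\tfrac{1}{m}\bigl(\Ric(\gamma',\gamma')-\lambda\bigr)\,h(t)
\]
directly from the $(\lambda,n+m)$-Einstein equation; the coefficient is smooth with no $w$ in any denominator, so uniqueness for this linear second-order ODE with $h(0)=h'(0)=0$ gives $h\equiv 0$ immediately, contradicting $w>0$ in the interior. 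A single geodesic suffices---there is no need to handle ``all directions,'' to separate the $m=1$ case, or to pursue the alternative Gronwall route via $|\nabla w|^2/w^2$.
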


\begin{proof}
Let $x_0 \in \partial M$ and let $\gamma(t)$ be a unit speed geodesic emanating from $x_0$ such that $\gamma^{\prime}(0) \perp \partial M$. Let $h(t) = w(\gamma(t))$ and $\Theta(t) = \mathrm{Ric}(\gamma^{\prime}(t), \gamma^{\prime}(t)) - \lambda$. Then the equation for $w$ becomes a linear second order ODE along $\gamma$ for $h$:
\begin{eqnarray*}
h^{\prime\prime}(t) &=& \mathrm{Hess} w (\gamma^{\prime}(t),
\gamma^{\prime}(t)) \\
&=& \frac{1}{m} \Theta(t) h(t). \\
h(0) &=& 0 \\
h^{\prime}(0) &=& g( \nabla w, \gamma^{\prime})_{x_0}.
\end{eqnarray*}
Therefore, If $\nabla w(x_0) = 0$, then $h^{\prime}(0 )=0$ and so $h = 0$
along all of $\gamma$. Since $\gamma(t) \in \mathrm{int}(M)$ for $0 < t<
\varepsilon$, this is a contradiction.
\end{proof}

This also gives us the following.

\begin{proposition}
\label{propboundary} The boundary $\partial M$ is totally geodesic and $|\nabla w|$ is constant on the connected components of $\partial M$.
\end{proposition}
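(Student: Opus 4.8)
The plan is to exploit the fact, just proved in Proposition \ref{regval}, that $\partial M$ is a regular level set of $w$: since $w\equiv 0$ on $\partial M$, $w>0$ in the interior, and $|\nabla w|\neq 0$ on $\partial M$, the vector field $\nu=\nabla w/|\nabla w|$ is a well-defined unit normal along $\partial M$. The single observation that drives both conclusions is that at a point $p\in\partial M$ we have $w(p)=0$, so the defining equation $\mathrm{Hess}\,w=\frac{w}{m}(\mathrm{Ric}-\lambda g)$ forces $\mathrm{Hess}\,w=0$ at every point of $\partial M$, as a symmetric bilinear form on $T_pM$.

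For the totally geodesic statement I would take vector fields $X,Y$ tangent to $\partial M$ along $\partial M$. Since $w$ vanishes identically on $\partial M$, the function $Yw$ vanishes along $\partial M$, so its derivative in the direction $X$ (which is tangent to $\partial M$) vanishes as well; hence at points of $\partial M$,
\[
\mathrm{Hess}\,w(X,Y)=X(Yw)-(\nabla_XY)w=-g(\nabla_XY,\nabla w)=-|\nabla w|\,g(\nabla_XY,\nu).
\]
Combining this with $\mathrm{Hess}\,w=0$ on $\partial M$ and $|\nabla w|\neq 0$ yields $g(\nabla_XY,\nu)=0$, i.e.\ the second fundamental form of $\partial M$ vanishes.

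For the constancy of $|\nabla w|$ I would differentiate $|\nabla w|^2$ in directions tangent to $\partial M$: for $X$ tangent to $\partial M$, $X(|\nabla w|^2)=2g(\nabla_X\nabla w,\nabla w)=2\,\mathrm{Hess}\,w(X,\nabla w)$, which again vanishes on $\partial M$ because $\mathrm{Hess}\,w=0$ there. Thus $|\nabla w|$ is locally constant on $\partial M$, hence constant on each connected component. (When $m>1$ this also follows, and in fact with constancy on all of $\partial M$, directly from Proposition \ref{propmu}, since setting $w=0$ there gives $\mu=(m-1)|\nabla w|^2$.) I do not anticipate a genuine obstacle: the only points requiring care are that $\mathrm{Hess}\,w$ at a boundary point depends only on the $2$-jet of $w$ there, so it is legitimate to compute the tensor using fields $X,Y$ chosen tangent to $\partial M$ along $\partial M$, and that one must keep track of the sign convention in the second fundamental form — neither of which affects the vanishing conclusions.
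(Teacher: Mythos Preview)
Your proof is correct and follows essentially the same approach as the paper: both observe that $\mathrm{Hess}\,w=0$ on $\partial M$ because $w=0$ there, deduce that the second fundamental form (which is proportional to the restriction of $\mathrm{Hess}\,w$) vanishes, and compute $D_X|\nabla w|^2=2\,\mathrm{Hess}\,w(X,\nabla w)=0$ for $X$ tangent to $\partial M$. You simply spell out in more detail the identification of $\mathrm{Hess}\,w(X,Y)$ with $-|\nabla w|\,g(\nabla_XY,\nu)$ that the paper leaves implicit.
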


\begin{proof}
The equation
\begin{equation*}
\mathrm{Hess}w=\frac{w}{m}(\mathrm{Ric}-\lambda g)
\end{equation*}
shows that $\mathrm{Hess}w=0$ on $\partial M.$ This shows that $\partial M$ is totally geodesic as the second fundamental form is proportional to $\left( \mathrm{Hess}w\right) |_{\partial M}.$ It also shows that $|\nabla w|^2$ is locally constant along $\partial M$ since
\begin{eqnarray*}
D_{X} |\nabla w|^2 = \mathrm{Hess }w(X, \nabla w).
\end{eqnarray*}
\end{proof}

When $m>1$ we now get something slightly different than in the $m=1$ case.

\begin{corollary}
\label{cormuboundary} If $m>1$ then $|\nabla w|^2$ is globally constant on $\partial M$ and, moreover, if $\partial M \neq \emptyset$ then $\mu > 0$.
\end{corollary}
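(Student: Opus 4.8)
The plan is to combine the global identity for $\mu$ from Proposition \ref{propmu} with the regularity statement of Proposition \ref{regval}; no new computation is really needed. Recall that Proposition \ref{propmu} produces a \emph{constant} $\mu$ with
\begin{equation*}
\mu = w\Delta w + (m-1)|\nabla w|^2 + \lambda w^2
\end{equation*}
holding at every point of $M$. First I would simply evaluate this identity along $\partial M$: since $w \equiv 0$ there by the defining system \eqref{eqnlambdaEinstein}, the terms $w\Delta w$ and $\lambda w^2$ drop out, leaving
\begin{equation*}
\mu = (m-1)|\nabla w|^2 \qquad \text{on } \partial M.
\end{equation*}

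Now the hypothesis $m>1$ enters twice. Since $m-1 \neq 0$ we may solve for $|\nabla w|^2 = \mu/(m-1)$, and because $\mu$ is a single global constant this forces $|\nabla w|^2$ to take the same value on every connected component of $\partial M$ — upgrading the local constancy of Proposition \ref{propboundary} to global constancy. For the second assertion, if $\partial M \neq \emptyset$ choose any $x_0 \in \partial M$; Proposition \ref{regval} gives $|\nabla w|(x_0) \neq 0$, hence $\mu = (m-1)|\nabla w|^2(x_0) > 0$.

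There is essentially no obstacle: the corollary is a formal consequence of the two preceding propositions. The only point worth flagging is why the argument is special to $m>1$. When $m=1$ the coefficient $m-1$ collapses and the boundary identity degenerates to $\mu = 0$; this is consistent with — in fact equivalent to — the extra condition $\Delta w = -\lambda w$ imposed in that case (needed for a one-dimensional fiber), and then $|\nabla w|$ need not be globally constant on $\partial M$. So the statement as phrased, with both the global constancy and the strict positivity of $\mu$, genuinely requires $m>1$.
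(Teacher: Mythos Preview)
Your proof is correct and follows exactly the paper's approach: evaluate the global identity for $\mu$ on $\partial M$ where $w=0$ to get $\mu=(m-1)|\nabla w|^2$, then read off both conclusions. Your explicit invocation of Proposition~\ref{regval} for the strict positivity and your remark on the $m=1$ degeneration are a bit more detailed than the paper's terse version, but the argument is the same.
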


\begin{proof}
On $\partial M$, (\ref{eqnmu}) becomes
\begin{equation}  \label{boundarymu}
\mu =(m-1)|\nabla w|^{2}
\end{equation}
so $\mu>0$ and $|\nabla w|^2$ is determined by $m$ and $\mu$.
\end{proof}

\smallskip

Now we prove Proposition \ref{propwarpedproduct}.

\begin{proof}[Proof of Proposition \protect\ref{propwarpedproduct}]
Let $g_{E}$ be the warped product Riemannian metric
\begin{eqnarray*}
g_{E} &=&g+w^{2}g_{F} \\
\mathrm{Ric}_{g_{F}} &=&\mu g_{F},
\end{eqnarray*}
where $F$ is an $m$-dimensional Einstein metric. The calculations in either \cite{Besse} or \cite{KimKim} show that $\mathrm{Ric}_{g_{E}}=\lambda g_{E}$. If $\partial M=\emptyset$ we then have that $g_{E}$ is smooth Einstein metric on the topological product $M\times F$. If $\partial M\neq \emptyset$, then $g_{E}$ is a metric on
\begin{equation*}
E=(M\times F)/\sim
\end{equation*}
where $(x,p)\sim (x,p^{\prime })$ if $x\in \partial M$. Note that near $\partial M,$ the topology of the space is $\partial M\times F^{m}.$ Note that Corollary \ref{cormuboundary} implies that $\mu >0$ when $m>1$ so we can take $F=\mathbb{S}^{m}$, which is necessary for $E$ to be a smooth
manifold. We only have to show that $g_{E}$ is a smooth metric on $E$, normalize so that $\mu =m-1$, then we have
\begin{equation*}
kw^{2}+(m-1)|\nabla w|^{2}=m-1.
\end{equation*}
The above equation shows that $\left\vert \nabla w\right\vert =1$ on $\partial M$ and this guarantees that we obtain a smooth metric. To see this write $g=\frac{1}{\left\vert \nabla w\right\vert ^{2}}\mathrm{d}w^{2}+g_{w}$ near $\partial M$ so that
\begin{eqnarray*}
g_{E} &=&\frac{\mathrm{d}w^{2}}{\left\vert \nabla w\right\vert ^{2}}+g_{w}+w^{2}g_{F} \\
&=&\mathrm{d}w^{2}+g_{w}+w^{2}g_{F}+O\left( 1-\left\vert \nabla w\right\vert
^{2}\right) \mathrm{d}w^{2}.
\end{eqnarray*}
Here $\mathrm{d}w^{2}+g_{w}+w^{2}g_{F}$ defines a smooth metric and the last term $O\left( 1-\left\vert \nabla w\right\vert ^{2}\right) \mathrm{d}w^{2}$ vanishes at $\partial M$ showing $g_{E}$ defines a smooth metric.
\end{proof}

Finally we show that the metric as well as $w$ have to be real analytic.

\begin{proposition}
Let $\left( M,g,w\right) $ be $\left( \lambda ,m+n\right) $-Einstein. Then $g $ and $w$ are real analytic in harmonic coordinates on $\mathrm{int}M.$
\end{proposition}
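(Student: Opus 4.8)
The plan is to invoke the standard regularity theory for Einstein metrics in harmonic coordinates, via the warped product construction of Proposition~\ref{propwarpedproduct}, and then transfer analyticity back down to the base. First I would recall that an Einstein metric is real analytic in harmonic coordinates: this is a classical result (DeTurck--Kazdan) which follows from the fact that the Einstein equation $\mathrm{Ric}(g)=\lambda g$, when written in harmonic coordinates, becomes an elliptic system with real-analytic dependence on the unknowns (the leading term is $-\tfrac12 g^{kl}\partial_k\partial_l g_{ij}$ plus lower-order terms polynomial in $g$, $g^{-1}$, $\partial g$), so elliptic analytic regularity applies. Hence the strategy is to realize $g$ as (a piece of) an Einstein metric and read off analyticity of $g$ and then of $w$.

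The key steps, in order, are as follows. Let $x_0\in\mathrm{int}(M)$. If $w$ is constant then $g$ is $\lambda$-Einstein and the DeTurck--Kazdan theorem directly gives that $g$ is real analytic in harmonic coordinates, so assume $(M,g,w)$ is non-trivial. Pick an integer $m'>1$; by the normalization in the proof of Proposition~\ref{propwarpedproduct} (or simply by rescaling the fiber) we may form the warped product $g_E=g+w^2 g_{\mathbb{S}^{m'}}$, which by Proposition~\ref{propwarpedproduct} is a smooth Einstein metric on a neighborhood of $\{x_0\}\times\mathbb{S}^{m'}$ in $E$ — note $x_0$ is an interior point so $w(x_0)>0$ and there is no boundary issue locally. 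By DeTurck--Kazdan, $g_E$ is real analytic in harmonic coordinates on $E$ near this fiber. Now restrict attention to the totally geodesic slice $M\times\{p\}$: one checks that harmonic coordinates on $(M,g)$ near $x_0$ can be completed to harmonic coordinates on $(E,g_E)$ near $(x_0,p)$ (for instance, use the $m'$ coordinate functions coming from an isometric embedding-type chart on a round sphere factor together with a base chart, and verify $\Delta_{g_E}$ of the base coordinate functions vanishes because of the product-type structure of the Laplacian transverse to the fiber; alternatively solve the Dirichlet problem $\Delta_{g_E}u=0$ with boundary data independent of the fiber variable, which by uniqueness produces fiber-independent solutions restricting to harmonic functions on the base). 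Restricting the real-analytic coordinate expression of $g_E$ to $M\times\{p\}$ shows that $g$ is real analytic in harmonic coordinates near $x_0$. Finally, $w$ is recovered from $g_E$ as (a constant multiple of) $\sqrt{g_E(\partial_\theta,\partial_\theta)}$ for a Killing field $\partial_\theta$ on the sphere factor, or more invariantly $w^2$ equals a component of the real-analytic tensor $g_E$ in these coordinates; hence $w$ is real analytic as well. (Since $x_0$ was arbitrary, this proves analyticity on all of $\mathrm{int}(M)$.)

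The main obstacle is the bookkeeping in the middle step: verifying that one can genuinely extend a harmonic coordinate chart on the base to a harmonic coordinate chart on the warped product whose restriction to the base slice is the original chart, so that the analytic regularity of $g_E$ descends cleanly to $g$. The cleanest route is probably the uniqueness argument for the Dirichlet problem: solving $\Delta_{g_E}u=0$ with fiber-independent boundary data forces $u$ to be fiber-independent (because $g_E$ has an isometric $SO(m'+1)$-action on the fiber and averaging over this action preserves both the equation and the boundary data, while the solution is unique), and a fiber-independent $g_E$-harmonic function is precisely a $g$-harmonic function on the base — this uses that $\Delta_{g_E}u = \Delta_g u + m'\,w^{-1}g(\nabla w,\nabla u)$ plus the fiber Laplacian, so on fiber-independent functions it differs from $\Delta_g$ only by a first-order term with smooth coefficients, whose kernel still defines a valid coordinate system near any point. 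Everything else — smoothness of $g_E$ near the interior fiber, the DeTurck--Kazdan analyticity statement, and the algebraic recovery of $w$ from $g_E$ — is either already established in the excerpt or entirely standard. One can avoid even this step at the cost of a direct argument: write the $(\lambda,n+m)$-Einstein system together with the identity of Proposition~\ref{propmu} as an elliptic system for the pair $(g,w)$ in harmonic coordinates on the base and apply analytic elliptic regularity directly; but the warped-product reduction is shorter and reuses Proposition~\ref{propwarpedproduct} as advertised.
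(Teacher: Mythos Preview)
Your main approach via the warped product has a genuine gap: Proposition~\ref{propwarpedproduct} requires $m$ to be an integer, since $m$ is the dimension of the fiber, while the analyticity statement is asserted for arbitrary real $m\geq 1$. You write ``pick an integer $m'>1$,'' but the warped product $g+w^2 g_{F^{m'}}$ is Einstein only when $(M,g,w)$ satisfies the $(\lambda,n+m')$-equation, i.e.\ only when $m'=m$. So for non-integer $m$ (and for $m=1$) the reduction to DeTurck--Kazdan is unavailable and the argument does not go through.

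Even when $m$ is an integer, the harmonic-coordinate extension step is not right as written. For a fiber-independent function $u$ one has $\Delta_{g_E}u=\Delta_g u+m\,w^{-1}g(\nabla w,\nabla u)$, so base $g$-harmonic coordinates are \emph{not} $g_E$-harmonic, contrary to your first claim. Your fallback --- use fiber-independent $g_E$-harmonic functions as base coordinates --- does give coordinates in which $g$ is real analytic, but these are not $g$-harmonic coordinates; you then still owe the (true, but unstated) lemma that once $g$ is real analytic in \emph{some} smooth chart it is real analytic in $g$-harmonic coordinates, because harmonic functions for a real-analytic metric are themselves real analytic. Also, the ``sphere coordinates'' you propose for the fiber directions are not $g_E$-harmonic either (the restrictions of linear functions to $\mathbb{S}^m$ are Laplace eigenfunctions, not harmonic), so building a full $g_E$-harmonic chart compatible with the fibration needs more care than you indicate.

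The paper's proof is exactly the ``direct argument'' you relegate to a parenthetical: it couples the $(\lambda,n+m)$-equation with Kim--Kim's identity (Proposition~\ref{propmu}) to obtain, in harmonic coordinates on the base, the quasi-linear system
\[
-\frac{w}{2m}g^{rs}\partial_r\partial_s g_{ij}-\partial_i\partial_j w+\cdots=0,\qquad
-w\,g^{rs}\partial_r\partial_s w+\cdots=0,
\]
which is elliptic where $w>0$ and has real-analytic dependence on $(g,w,\partial g,\partial w)$; analytic elliptic regularity then gives the result for all $m$. That route is both shorter and covers the full range of $m$; your warped-product detour, even once patched, would only recover the integer case.
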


\begin{proof}
We proceed as in \cite[Theorem 5.26]{Besse} using the two equations
\begin{eqnarray*}
\frac{w}{m}\mathrm{Ric}-\mathrm{Hess}w-\frac{w}{m}\lambda g &=&0, \\
w\Delta w+\left( m-1\right) \left\vert \nabla w\right\vert ^{2}+\lambda
w^{2}-\mu &=&0
\end{eqnarray*}
In harmonic coordinates this looks like the quasi-linear system
\begin{eqnarray*}
-\frac{w}{2m}\sum g^{rs}\frac{\partial ^{2}g_{ij}}{\partial x^{r}\partial x^{s}}-\frac{\partial ^{2}w}{\partial x^{i}\partial x^{j}}+\text{\textrm{lower order terms}} &=&0 \\
-w\sum g^{rs}\frac{\partial ^{2}w}{\partial x^{r}\partial x^{s}}+\text{\textrm{lower order terms}} &=&0
\end{eqnarray*}
We note this is elliptic as long as $w>0.$ In addition the whole system is of the form $F\left(g, w, \partial g,\partial w,\partial ^{2}g,\partial ^{2}w\right) =0$ where $F$ is real analytic. The claim then follows.
\end{proof}

\section{Examples}

In this section we review the classification of one and two dimensional $(\lambda, n+m)$-Einstein metrics which are stated in \cite{Besse}. We also give the characterization of non-trivial $(\lambda, n+m)$-Einstein manifolds which are also Einstein. We will often reduce to this characterization when proving the later results. Finally we reference some interesting examples in
higher dimensions that can be found in the literature.

We begin with the simplest case, the one dimensional examples.

\begin{example}[One dimensional examples -- See 9.109 in \protect\cite{Besse}]
\label{exampleoneD}
Suppose $M$ is one dimensional, then the $(\lambda, 1+m)$-Einstein equation is
\begin{eqnarray*}
w^{\prime\prime}= - k w , \quad \mbox{where} \quad k = \frac{\lambda}{m}.
\end{eqnarray*}
Then, up to re-parametrization of $t$, $w$ must be one of the following examples, here the metric is $g = \mathrm{d} t^2$ and $C$ is an arbitrary positive constant.

\begin{table}[!h]
\begin{center}
\vspace{.2 in} {\ \offinterlineskip
\tabskip=0pt
\halign{
\vrule height2.75ex depth1.25ex width 0.6pt #\tabskip=1em &
\hfil # \hfill &\vrule #& \hfill  #  \hfill &\vrule#& \hfill  # \hfill &\vrule #& \hfill # \hfill \vrule width 0.6pt \tabskip=0pt\cr
\noalign{\hrule height 0.6pt}
&  &&$\lambda >0 $ && $\lambda=0$&& $\lambda <0 $ \cr
\noalign{\hrule}
& $\mu>0$ && $M=[-\frac{\pi}{2\sqrt{k}}, \frac{\pi}{2\sqrt{k}}]$ &&$ M= [0, \infty) $ && $M=[0, \infty)$  \cr
& && $w(t) = C \mathrm{cos(\sqrt{k}t)}$ && $w(t) = Ct$ && $w(t) = C\sinh(\sqrt{-k}t)$  \cr
\noalign{\hrule}
& $\mu=0$ && None  &&$ M=\mathbb{R} $ && $M=\mathbb{R}$  \cr
& &&   && $w(t) = C$ && $w(t) = Ce^{\sqrt{-k}t}$  \cr
\noalign{\hrule}
& $\mu <0 $ && None  && None  && $M= \mathbb{R}$  \cr
& &&   &&  && $w(t) = C \cosh(\sqrt{-k}t) $  \cr
\noalign{\hrule height 0.6pt}
}}
\par
\vspace{.2 in}
\end{center}
\caption{One dimensional $(\protect\lambda, m+n)$-Einstein manifolds.}
\label{table1D}
\end{table}
\end{example}

\begin{remark}
When we construct the warped product metrics $g_E$ over the 1-dimensional examples we obtain various ways to write the constant curvature spaces as warped products over one-dimensional bases. The entry in the middle of Table \ref{table1D} is the trivial solution and the other five are non-trivial.
\end{remark}

More simple examples arise from classifying $(\lambda, n+m)$-Einstein manifolds which are also $\rho$-Einstein for some constant $\rho \neq \lambda $. This is an extension of Proposition 4.2 in \cite{CSW} to manifolds with boundary.

\begin{proposition}
\label{propkappaEinstein} Suppose that $(M^n,g,w)$ with $n \geq 2$ is a non-trivial $(\lambda, m+n)$-Einstein manifold which is also $\rho$-Einstein, then it is isometric to one of the examples in Table \ref{TablekappaEinstein}, where $\bar{\kappa} = \frac{\lambda - \rho}{m}$.
\end{proposition}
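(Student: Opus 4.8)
The plan is to exploit the fact that a $\rho$-Einstein metric has $\Ric = \rho g$, so that the $(\lambda,m+n)$-Einstein equation collapses to a single scalar equation for $w$. First I would substitute $\Ric = \rho g$ into $\Hess w = \frac{w}{m}(\Ric - \lambda g)$ to get $\Hess w = -\bar\kappa\, w\, g$ with $\bar\kappa = \frac{\lambda - \rho}{m}$. A Hessian proportional to the metric with a nonconstant potential is the classical Obata/Tashiro-type situation, so the next step is to recall the rigidity consequences of an equation of the form $\Hess w = \phi(w)\, g$: away from the critical points of $w$, the level sets of $w$ are umbilic, $|\nabla w|^2$ is a function of $w$ alone, and the metric is a warped product $g = \df t^2 + \psi^2(t) g_L$ over an interval, where $t$ is (up to reparametrization) arc length along the gradient flow of $w$ and $w = w(t)$ with $w'' = -\bar\kappa w$. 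Here I would also use Proposition \ref{propmu} (equivalently \eqref{eqnmu}) to pin down $|\nabla w|^2$ as a specific function of $w$: from $\mu = k w^2 + (m-1)|\nabla w|^2$ with $k = \frac{\scal + (m-n)\lambda}{m} = \frac{n\rho + (m-n)\lambda}{m}$, one gets $|\nabla w|^2 = \frac{\mu - k w^2}{m-1}$.

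Next I would organize the classification by the sign of $\bar\kappa$ (equivalently the sign of $\lambda - \rho$) and by the sign of $\mu$, exactly as in the one-dimensional Example \ref{exampleoneD}, because the ODE $w'' = -\bar\kappa w$ together with the first integral $|\nabla w|^2 = \frac{\mu - k w^2}{m-1}$ forces $w$ to be (a constant multiple of) $\cos$, $\sin$, $t$, $\sinh$, $\cosh$, or an exponential of $\sqrt{\pm\bar\kappa}\,t$, and correspondingly forces the range of $t$ and the behavior at the endpoints. The key point is that where $|\nabla w| = 0$ but $w \neq 0$, smoothness of the metric forces a smooth closing-up of the level sets to a point, giving a rotationally symmetric metric on a disc, sphere, or Euclidean space over the base $(L,g_L)$; where $w = 0$ we are at $\partial M$ (using Proposition \ref{regval}, $|\nabla w| \neq 0$ there, so $\partial M$ is a smooth totally geodesic hypersurface by Proposition \ref{propboundary}); and Corollary \ref{cormuboundary} shows $\partial M \neq \emptyset$ forces $\mu > 0$. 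In each case the cross-section $(L, g_L)$ inherits an Einstein condition from the fact that $\Ric_g$ is a (constant) multiple of $g$ — the O'Neill/warped-product Ricci formulas then read off $\Ric_{g_L} = c\, g_L$ for an explicit $c$ determined by $\psi$, $\bar\kappa$, and $\rho$ — and matching these constants produces precisely the finite list in Table \ref{TablekappaEinstein}.

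The main obstacle I anticipate is not the local ODE analysis, which is essentially Example \ref{exampleoneD} with a parameter, but rather the careful bookkeeping of the \emph{global} structure: one must argue that a nontrivial solution has $\nabla w \neq 0$ on a dense open set, identify all of $M$ (not just a warped-product neighborhood) by following the gradient flow to its maximal extent, check that the only places the flow can terminate are a critical point of $w$ (forcing a smooth pole, hence a constraint $\psi(0) = 0$, $\psi'(0) = 1$ on the warping function and $g_L$ a round sphere of the correct radius) or the boundary $\partial M$, and rule out incomplete or singular behavior using the completeness hypothesis. A secondary subtlety is the case $n = 2$ versus $n \geq 3$: when $n = 2$ the cross-section $L$ is one-dimensional so the ``Einstein'' condition on $g_L$ is vacuous and one must instead appeal directly to the two-dimensional classification (which the excerpt defers to \cite{Besse}); I would handle $n \geq 3$ by the warped-product Ricci computation above and treat $n = 2$ by the explicit two-dimensional list. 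Finally I would verify that each entry of Table \ref{TablekappaEinstein} is genuinely $\rho$-Einstein with the claimed $\rho$, closing the ``if and only if''.
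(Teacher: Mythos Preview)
Your proposal is correct and follows essentially the same approach as the paper: reduce the $(\lambda,m+n)$-Einstein equation under $\Ric=\rho g$ to $\Hess w=-\bar\kappa\,w\,g$, then invoke the classical Brinkmann/Tashiro/Obata structure theorem to obtain a warped product with $w''=-\bar\kappa w$, and classify case-by-case. The paper's version is terser---it observes directly that $\mathcal{L}_{\nabla w}g=-2\bar\kappa w g$ forces $g=\df t^2+(w')^2 g_S$, so the warping function is $w'$ itself (making the smoothness condition at a critical point immediately force $g_S$ to be a round sphere), and then cites \cite{Brinkmann}, \cite{Besse}, \cite{CheegerColding} for the remaining details you spell out.
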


\begin{table}[!h]
\begin{center}
\vspace{.2 in}
\par
{\ \offinterlineskip
\tabskip=0pt
\halign{
\vrule height2.75ex depth1.25ex width 0.6pt #\tabskip=0.6em &
\hfil # \hfill &\vrule #& \hfill  #  \hfill &\vrule#& \hfill  # \hfill &\vrule #& \hfill # \hfill \vrule width 0.6pt \tabskip=0pt\cr
\noalign{\hrule height 0.6pt}
&  &&$\lambda>0 $ && $\lambda=0$&& $\lambda<0 $ \cr
\noalign{\hrule}
& $\mu > 0 $ && $\mathbb{D}^{n}$ &&$ [0, \infty) \times F$ &&   $[0, \infty) \times N$  \cr
& && $g = \df t^{2}+ \sqrt{\bar{k}} \sin ^{2}(\sqrt{\bar{k}} t) g_{\Sph^{n-1}} $ && $g =\df t^{2}+ g_{F} $ && $g =\df t^{2}+ \sqrt{-\bar{k}} \cosh ^{2}(\sqrt{-\bar{k}} t) g_{N}$  \cr
& && $w(t) = C \cos(\sqrt{\bar{k}}t)$ && $w(t) = C t$ && $w(t) = C \sinh(\sqrt{-\bar{k}}t)$  \cr
\noalign{\hrule}
& $\mu=0$ && None  && None && $(-\infty , \infty ) \times F$  \cr
& &&   &&   && $g = \df t^{2}+e^{2\sqrt{-\bar{k}}t}g_{F} $  \cr
& &&   && && $w(t) = C e^{\sqrt{-\bar{k}}t}$  \cr
\noalign{\hrule}
& $\mu < 0 $ && None  && None  && $\mathbb{H}^n$  \cr
& &&   &&  && $g = \df t^2 +  \sqrt{-\bar{k}} \sinh^2(\sqrt{-\bar{k}} t) g_{\Sph^{n-1}}$  \cr
& &&   &&  && $w(t) = C\cosh(\sqrt{-\bar{k}}t) $  \cr
\noalign{\hrule height 0.6pt}
}}
\par
\vspace{.2 in}
\par
\smallskip
\end{center}
\caption{Non-trivial $(\protect\lambda,m+n)$-Einstein manifolds that are also Einstein. When $m=1$, $\protect\mu=0$. When $m>1$, the sign of $\protect \mu$ is given in the left hand side of the table. Here $\mathbb{S}^{n-1}$ is a round sphere, $F$ is Ricci flat, $N$ is an Einstein metric with negative Ricci curvature, and $C$ is an arbitrary positive constant}
\label{TablekappaEinstein}
\end{table}

\begin{proof}
Suppose that $\mathrm{Ric} = \rho g$ and let $\bar{k}=\frac{\lambda -\rho }{m}$. Then we have
\begin{equation*}
\mathrm{Hess}w=-\bar{k}wg.
\end{equation*}

If $\bar{k}=0$ then we have $\mathrm{Hess} w = 0$, so if $w$ is non-constant then it must be a multiple of a distance function and the metric must split along $w$, so we obtain the product metric in the $\lambda = 0$, $\mu>0$ entry in the table.

On the other hand, if $\bar{k} \neq 0$, then $w$ is a strictly convex or concave function on the interior of $M$ and therefore it can have at most one isolated critical point. Now the fact that $\mathcal{L}_{\nabla w}g=2\mathrm{Hess}w=-2\bar{k}wg$, tells us that $w = w(t)$ and we have
\begin{eqnarray*}
g &=& \mathrm{d} t^{2}+(w^{\prime }(t))^{2}g_{S} \\
w^{\prime \prime }(t) &=&-\bar{k}w.
\end{eqnarray*}
where $t$ is the distance to the critical point (if it exists), or is the distance to a level set if $\nabla w$ never vanishes. When there is a critical point at $t=0$, $g_S$ must be the round sphere to obtain a smooth metric, if there is no critical point, then $g_S$ is the metric of a level set of $w$. The result follows easily from these equations. For more details see, \cite{Brinkmann}, \cite{Besse}, \cite{PWclassification}, or \cite{CheegerColding}.
\end{proof}

\begin{remark}
Note that when $\lambda$ and $\mu$ have the same sign, then the space is
either trivial or is the simply connected space of constant curvature. In
the other examples, if we let $N$ or $F$ have constant curvature, then one
also gets constant curvature spaces, but if $N$ or $F$ do not have constant
curvature we get examples which are Einstein but do not have constant
curvature.
\end{remark}

\begin{remark}
In both of the tables above there are empty spaces when $\lambda \geq 0$ and
$\mu\leq 0$. It turns out that there are no non-trivial examples in these
cases in general. See Corollary \ref{TableUTriangular} in the next section.
\end{remark}

\begin{remark}
It is also important to notice here that the Einstein constant, $\rho $ of
the metrics in Table \ref{TablekappaEinstein} is $\rho =(n-1)\bar{k}$. Since
$\bar{k}=\frac{\lambda -\rho }{m}$, this shows that $\rho $ is determined by
$\lambda ,n,$ and $m$ via the formula
\begin{equation}\label{rholambda}
(m+n-1)\rho =(n-1)\lambda .
\end{equation}
\end{remark}

\begin{remark}
It is also interesting to consider the behavior of the examples in Table \ref{TablekappaEinstein} as $m \rightarrow \infty$. That is, fix $\lambda$ and $n $ and let $m \rightarrow \infty$. From (\ref{rholambda}), we see that $\rho \rightarrow 0$ and so $\bar{k} \rightarrow 0$ as well. We first consider the $\lambda > 0, \mu>0$ case. In this case we can see that the diameter of the hemispheres are expanding and the metric is becoming flat. The Riemannian measure on $g_E$ is
\begin{eqnarray*}
\df \mathrm{vol}_{E} = w^m \df \mathrm{vol}_M \otimes \df \mathrm{vol}_F
\end{eqnarray*}

So we have the natural measure on $M$, $w^m \df \mathrm{vol}_g$ associated to
the $(\lambda, n+m)$-Einstein metric. In our case this becomes,
\begin{eqnarray*}
\lim_{m\rightarrow \infty} w^m(t) &=&\lim_{m\rightarrow \infty} \cos ^m (
\sqrt{\bar{k}} t ) \\
& = & \lim_{m\rightarrow \infty}\left( 1 - \frac{\lambda - \rho}{m} \frac{
t^2}{2} + \cdots \right)^m \\
&=&\lim_{m\rightarrow \infty} \left( 1 - \frac{\lambda t^2}{2m} \right)^m \\
&=& e^{-\frac{\lambda}{2} t^2}.
\end{eqnarray*}
Therefore we can see that this family of examples converge to the shrinking Gaussian on $\mathbb{R}^n$. For this reason one could call the $(\lambda, n+m)$-metric on the hemisphere the \emph{elliptic Gaussian}. By the same argument we can also see that the constant curvature $(\lambda, n+m)$-Einstein metrics with $\lambda <0$ and $\mu<0$ will converge to the expanding Gaussian $e^{-\frac{\lambda}{2} t^2}$ with $\lambda<0$.

On the other hand, in the cases where $\lambda$ and $\mu$ do not have the same sign, there is no convergence. For example, if $w = e^{\sqrt{-\bar{k}}t} $, then
\begin{equation*}
w^m = e^{\sqrt{ m(\rho - \lambda)}t}
\end{equation*}
so the function $w^m$ degenerates as $m \rightarrow \infty$, going to zero if $t<0$, staying constant at $t=0$, and blowing up if $t>0$. The other examples behave similarly.
\end{remark}

We now turn our attention to the classification of surface $(\lambda ,2+m)$-Einstein metrics, which is stated in \cite{Besse}. First we have the various examples with constant curvature from Table \ref{TablekappaEinstein}. It is straight forward to see from the analysis in \cite{Besse}(also see
Appendix A) that these are the only examples with $m=1$ or $\partial M\neq \emptyset$. Theorem 1.2 in \cite{CSW} also shows that there are no non-trivial compact examples with $\partial M\neq \emptyset$. When $M$ is non-compact, there are a few examples with non-constant curvature and we
discuss some interesting ones.

\begin{example}[Generalized Schwarzschild metric]
\label{exampleschwarzschild}

Let $w$ be the unique positive solution on $[0,\infty )$ to
\begin{equation*}
(w^{\prime })^{2}=1-w^{1-m}\quad \mbox{with}\quad w(0)=1,\quad w^{\prime
}\geq 0.
\end{equation*}
Then
\begin{eqnarray*}
g &=& \mathrm{d}t^2 + (w^{\prime}(t))^2 \mathrm{d} \theta ^2 \\
w &=&w(t)
\end{eqnarray*}
is the unique $(0,2+m)$-Einstein metric with non-constant curvature(see \cite{Besse}, Example 9.118(a)). This is a rotationally symmetric metric on $\mathbb{R}^2$ with $\mu>0$. If we set $w=r$ we can write the metric $g_{E}$ as
\begin{eqnarray*}
g_E = \frac{1}{1-r^{1-m}} \mathrm{d} r^2 + \left( 1 - r^{1-m} \right)
\mathrm{d} u^2 + r^2 g_{\mathbb{S}^m}
\end{eqnarray*}
which, when $m=2$, is the usual way to write the Schwarzschild metric on $E = \mathbb{R}^2 \times \mathbb{S}^2$.

In \cite{Case2} it is shown that as $m \rightarrow \infty$ these metrics converge to Hamilton's cigar, which is the unique rotationally symmetric steady gradient Ricci soliton, so one could also call calls these metrics $m$-cigars.
\end{example}

\begin{remark}
From the viewpoint of general relativity it is more natural to view the
Schwarzschild as a static metric. To see this just reverse the roles of $w$
and $w^{\prime}$, then
\begin{eqnarray*}
\overline{M} &=& [0, \infty) \times \mathbb{S}^{n-1} \\
\bar{g} &=& \mathrm{d} t^2 + w^2(t) g_{\mathbb{S}^{n-1}} \\
\bar {w}& = &w^{\prime}
\end{eqnarray*}
is a static metric which also has the generalized Schwarchild metric as its
total space $g_E$.
\end{remark}

\begin{example}
When $\lambda < 0$ and $m>1$ there are also two additional families of examples of $(\lambda, 2+m)$-Einstein metrics which do not have constant curvature. The first are translation invariant metrics in an axis (see \cite{Besse} 9.118 (c)). These examples all must have $\mu<0$. If we quotient these examples in the axis of symmetry we also obtain examples on the cylinder $\mathbb{R} \times \mathbb{S}^1$. On the other hand, Example 9.118 (d) in \cite{Besse} gives rotationally symmetric examples with $\lambda <0$. These examples can have $\mu$ positive, zero, or negative.
\end{example}

\smallskip

\begin{remark}
\label{remKah} As we mention in the introduction, the classification of K\"{a}hler $(\lambda, n+m)$-Einstein metrics in \cite{CSW} goes through to the case where we allow boundary. In particular, the arguments in Theorem 1.3 in \cite{CSW} show that the universal cover must split as the product of a $\lambda$-Einstein metric and a two dimensional solution. In particular
this implies that a compact K\"{a}hler $(\lambda, n+m)$-Einstein metric must either be trivial or the product of a $\lambda$-Einstein metric with the elliptic Gaussian.
\end{remark}

In higher dimensions there are other interesting constructions of $(\lambda, n+m)$-Einstein metrics.

\begin{example}
B\"{o}hm \cite{Bohm1} has constructed non-trivial rotationally symmetric $(\lambda, n+m)$-Einstein metrics on $\mathbb{S}^n$ and $\mathbb{D}^n$ for $n=3, 4, 5, 6, 7.$

Examples with $\lambda \leq 0$ are also constructed in \cite{Bohm2}. It is also proven that for each $m$, there is a unique rotationally symmetric $(0,n+m)$-Einstein metric on $\mathbb{R}^n$. The other examples are not locally conformally flat.

Other examples are constructed by L\"{u}, Page, and Pope in \cite{LvPagePope}. For $m \geq 2$ they construct non-trivial cohomogeneity one $(\lambda, n+m)$-Einstein metrics on some $\mathbb{S}^2$ and $\mathbb{R}^2$-bundles over K\"{a}hler Einstein metrics. These examples have $\mu>0$ and the $\mathbb{S}^2 $-bundles have $\lambda>0$ while the $\mathbb{R}^2$ bundles have $\lambda =0$. The lowest dimension for this construction is four and it is the one on the nontrivial $\mathbb{S}^2$-bundle over $\mathbb{CP}^1$, i.e. $\mathbb{CP}^2 \sharp \overline{\mathbb{CP} ^2}$ where $\overline{\mathbb{CP}^2}$ has the opposite orientation. These examples are also not locally conformally flat.
\end{example}

Finally we show that in dimension 3 there are local solutions to the $\left(\lambda,m+n\right)$-Einstein equations which are not locally conformally flat.
\begin{example}
\label{exa:3DnonCF}
Consider a doubly warped product metric
\begin{equation*}
g = \mathrm{d}r^{2}+\phi^{2}\mathrm{d}\theta_{1}^{2}+\psi^{2}\mathrm{d}\theta_{2}^{2}
\end{equation*}
then the equations
\begin{equation*}
\frac{m}{w}\mathrm{Hess}w = \mathrm{Ric}-\lambda I
\end{equation*}
become
\begin{eqnarray*}
m\frac{w^{\prime\prime}}{w} & = & -\frac{\phi^{\prime\prime}}{\phi}-\frac{\psi^{\prime\prime}}{\psi}-\lambda\\
m\frac{w^{\prime}\phi^{\prime}}{w\phi} & = & -\frac{\phi^{\prime\prime}}{\phi}-\frac{\phi^{\prime}\psi^{\prime}}{\phi\psi}-\lambda\\
m\frac{w^{\prime}\psi^{\prime}}{w\psi} & = & -\frac{\psi^{\prime\prime}}{\psi}-\frac{\phi^{\prime}\psi^{\prime}}{\phi\psi}-\lambda
\end{eqnarray*}
These can be solved near $r=0$ using suitable initial conditions. For example when $\phi(0)=\psi(0)>0$, $\phi^{\prime}(0)\ne\psi^{\prime}(0)$, $w(0)>0$, and $w^{\prime}(0)=0$ we obtain a local solution to the $(\lambda,3+m)$-equations that is not locally conformally flat.
\end{example}

\medskip

\section{Compact $(\protect\lambda, m+n)$ Einstein spaces}

In this section we discuss the proof Theorem \ref{compact} and some other
general facts about compact $(\lambda ,m+n)$ Einstein manifolds.

We begin with some notation. For $a \in \mathbb{R}$ we consider the measure $\df \mu_a = w^{a}\df \mathrm{vol}_{g}$ on $M.$ There is a naturally defined Laplacian associated to the measure
\begin{eqnarray*}
L_a(u) &=&w^{-a}\mathrm{div}\left( w^{a}\nabla u\right) \\
&=&\Delta u+w^{-a}g\left( \nabla u,\nabla w^{a}\right) \\
&=&\Delta u+aw^{-1}g\left( \nabla u,\nabla w\right),
\end{eqnarray*}
which is self-adjoint when $M$ is closed. When $M$ has boundary we also have that $w=0$ on $\partial M.$ This means that $\df \mu_a$ is finite as long as $a>-1.$ Note that $L_a(u)$ is only defined on $\mathrm{int}(M)$ and can blow at the boundary unless $u$ satisfies the weighted condition:
\begin{equation*}
\left\vert \nabla u\right\vert \leq Cw.
\end{equation*}
In any case, the divergence theorem gives us the following lemma.

\begin{lemma}
\label{IntByParts}On a compact $(\lambda, m+n)$-Einstein manifold, if $u, v$
are functions such that
\begin{equation}  \label{boundaryterm}
\lim_{x \rightarrow \partial M}v w^a g(\nabla u, \nabla w) = 0
\end{equation}
then
\begin{equation*}
\int_{M} v L_a(u) \df \mu_a =-\int_{M}g\left( \nabla v,\nabla u\right) \df\mu_a.
\end{equation*}
\end{lemma}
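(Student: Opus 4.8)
The plan is to apply the divergence theorem on $\mathrm{int}(M)$ to the vector field $X = v w^a \nabla u$ and carefully control the boundary term. First I would note the pointwise identity $\mathrm{div}(v w^a \nabla u) = v\, \mathrm{div}(w^a \nabla u) + w^a g(\nabla v, \nabla u) = v w^a L_a(u) + w^a g(\nabla v, \nabla u)$, valid on $\mathrm{int}(M)$ where $w>0$. Integrating this against $\df\mathrm{vol}_g$ over $\mathrm{int}(M)$ and using $w^a \df\mathrm{vol}_g = \df\mu_a$ gives
\begin{equation*}
\int_M v L_a(u)\, \df\mu_a + \int_M g(\nabla v, \nabla u)\, \df\mu_a = \int_M \mathrm{div}(v w^a \nabla u)\, \df\mathrm{vol}_g,
\end{equation*}
so the lemma reduces to showing the right-hand side vanishes.

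Next I would justify the integrations and the vanishing of the boundary term. Since $M$ is compact, $\mathrm{int}(M)$ has finite volume, and by Proposition \ref{regval} together with Proposition \ref{propboundary} the function $w$ behaves like a smooth distance function near $\partial M$ with $|\nabla w|$ bounded below; in particular $w^a$ is integrable precisely when $a > -1$, but here we only need that the relevant integrands are integrable, which follows from hypothesis \eqref{boundaryterm} controlling the growth of the relevant quantities near $\partial M$. To evaluate the right-hand side I would exhaust $\mathrm{int}(M)$ by the domains $M_\varepsilon = \{w \geq \varepsilon\}$, whose boundaries $\{w = \varepsilon\}$ are smooth hypersurfaces for small regular values $\varepsilon$ (using Proposition \ref{regval}, $\varepsilon$ small is a regular value). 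On $M_\varepsilon$ the classical divergence theorem gives
\begin{equation*}
\int_{M_\varepsilon} \mathrm{div}(v w^a \nabla u)\, \df\mathrm{vol}_g = \int_{\{w=\varepsilon\}} v w^a g(\nabla u, \nu)\, \df A,
\end{equation*}
where $\nu = -\nabla w/|\nabla w|$ is the outward unit normal to $M_\varepsilon$. The hypothesis \eqref{boundaryterm} says exactly that $v w^a g(\nabla u, \nabla w) \to 0$ as $x \to \partial M$, so, since $|\nabla w|$ is bounded below near $\partial M$, the integrand $v w^a g(\nabla u, \nu)$ tends to $0$ uniformly on $\{w = \varepsilon\}$ as $\varepsilon \to 0$; combined with the fact that the areas of these level sets stay bounded (they converge to the area of $\partial M$, again by the distance-function picture), the boundary integral goes to $0$. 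Letting $\varepsilon \to 0$ and using dominated convergence on the left, i.e. on $M_\varepsilon$, yields the claimed identity.

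The main obstacle is the analysis near $\partial M$: one must ensure that $v L_a(u)$ and $g(\nabla v,\nabla u)$ are genuinely integrable against $\df\mu_a$ over all of $M$ (not just absolutely integrable on each $M_\varepsilon$) so that the monotone/dominated convergence argument applies, and one must confirm that the level set areas $\mathrm{Area}(\{w=\varepsilon\})$ do not blow up as $\varepsilon\to 0$. Both follow from the structure of $w$ near the totally geodesic boundary established in Propositions \ref{regval} and \ref{propboundary} — near $\partial M$ we may write $g = |\nabla w|^{-2}\df w^2 + g_w$ with $|\nabla w|$ smooth and nonzero, so $\{w = \varepsilon\}$ is a smooth family of hypersurfaces converging to $\partial M$ with controlled geometry, and $w \asymp \varepsilon$ there. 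This is the only delicate point; the rest is the routine product rule and divergence theorem computation sketched above.
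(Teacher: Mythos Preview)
Your argument is correct and follows essentially the same route as the paper: exhaust $M$ by the sublevel sets $\{w\geq\varepsilon\}$, apply the divergence theorem to $v w^{a}\nabla u$, and use $\nabla w\neq 0$ on $\partial M$ together with the hypothesis to kill the boundary integral in the limit. The paper's proof is terser---it records the divergence-theorem identity on $U_{\varepsilon}$ and simply says ``the result follows since $\nabla w\neq 0$ on $\partial M$''---while you have spelled out the supporting details (bounded level-set areas, uniform smallness of the boundary integrand) that justify passing to the limit.
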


\begin{proof}
Let $U_{\varepsilon} = \{ x \in M: w(x) \geq \varepsilon \}$. By the
divergence theorem,
\begin{equation*}
\int_{U_{\varepsilon}}vL_a(u) \df \mu_a=-\int_{U_{\varepsilon}}g\left( \nabla
v, \nabla u\right) \df \mu_a -\int_{\partial U_{\varepsilon} }g\left( \frac{\nabla w}{\left\vert \nabla w\right\vert },v\nabla u\right) \df \mu_{a |\partial U_{\varepsilon}}.
\end{equation*}
The result follows since $\nabla w \neq 0$ on $\partial M$.
\end{proof}

\begin{remark}
If $a>0$ and $|\nabla u|$ and $v$ don't blow up at $\partial M$ then the
condition (\ref{boundaryterm}) is always satisfied, but we will also apply
this lemma when $a<0$.
\end{remark}

We encounter two sets of formulas involving $L_a$. The first such example is the following re-interpretation of Kim-Kim's identity as the formula for $L_{m-2}(w^2)$, in the next section we see that $a=m+1$ arises when considering the scalar curvature.

\begin{proposition}
On a $(\lambda, n+m)$-Einstein manifold,
\begin{equation*}
L_{m-2}(w^{2})=-2\lambda w^{2}+2\mu
\end{equation*}
\end{proposition}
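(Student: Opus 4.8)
The plan is to prove this by a direct computation, unwinding the definition of $L_{m-2}$ and then invoking the Kim-Kim identity of Proposition \ref{propmu}. First I would write out
\begin{equation*}
L_{m-2}(w^{2}) = \Delta (w^{2}) + (m-2)w^{-1}g\!\left( \nabla (w^{2}),\nabla w\right),
\end{equation*}
using the formula $L_a(u) = \Delta u + aw^{-1}g(\nabla u,\nabla w)$ recorded just before the statement.

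Next I would expand each piece by the product rule: $\Delta(w^2) = 2w\Delta w + 2|\nabla w|^2$ and $\nabla(w^2) = 2w\nabla w$, so that $w^{-1}g(\nabla(w^2),\nabla w) = 2|\nabla w|^2$. Substituting these gives
\begin{equation*}
L_{m-2}(w^{2}) = 2w\Delta w + 2|\nabla w|^{2} + 2(m-2)|\nabla w|^{2} = 2w\Delta w + 2(m-1)|\nabla w|^{2}.
\end{equation*}
The point worth flagging is that the choice $a = m-2$ is exactly what makes the cross term $2(m-2)|\nabla w|^2$ combine with the $2|\nabla w|^2$ coming from $\Delta(w^2)$ to produce the coefficient $2(m-1)$ that matches the left-hand side of Kim-Kim's identity; no other value of $a$ would give a closed expression in terms of $\mu$.

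Finally, Proposition \ref{propmu} states $\mu = w\Delta w + (m-1)|\nabla w|^{2} + \lambda w^{2}$, hence $w\Delta w + (m-1)|\nabla w|^2 = \mu - \lambda w^2$, and substituting yields $L_{m-2}(w^{2}) = 2(\mu - \lambda w^{2}) = -2\lambda w^{2} + 2\mu$, as claimed. There is no real obstacle here; the only mildly delicate point is that $L_{m-2}(w^2)$ is a priori only defined on $\mathrm{int}(M)$, but since $w^2$ and its derivatives are smooth up to $\partial M$ and the computation is an algebraic identity among smooth functions, the conclusion extends to all of $M$ by continuity.
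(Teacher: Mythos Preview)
Your proof is correct and follows essentially the same approach as the paper: both expand $L_{m-2}(w^2)$ via $\Delta(w^2) = 2w\Delta w + 2|\nabla w|^2$ and $\nabla(w^2) = 2w\nabla w$, then invoke the Kim--Kim identity of Proposition~\ref{propmu}. The only cosmetic difference is the direction of the computation (you start from $L_{m-2}(w^2)$ and reduce to $\mu$, while the paper starts from $\mu$ and builds up to $L_{m-2}(w^2)$), and your remark explaining why $a=m-2$ is the distinguished choice is a nice addition not made explicit in the paper.
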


\begin{proof}
We have
\begin{eqnarray*}
\nabla(w^2) &=& 2 w \nabla w \\
\mathrm{Hess} (w^2) &=& 2 \mathrm{d} w \otimes \mathrm{d} w + 2 w \mathrm{Hess} w \\
\Delta(w^2) &=& 2 |\nabla w|^2 + 2 w \Delta w.
\end{eqnarray*}
So the equation
\begin{eqnarray*}
\mu = w\Delta w + (m-1)|\nabla w|^2 + \lambda w^2
\end{eqnarray*}
can be re-written as
\begin{eqnarray*}
-2\lambda w^{2}+2\mu &=&2w\Delta w+2(m-1)|\nabla w|^{2} \\
&=&\Delta (w^{2})+2(m-2)|\nabla w|^{2} \\
&=&L_{m-2} (w^{2}).
\end{eqnarray*}
\end{proof}

\begin{corollary}
Define the function
\begin{equation}  \label{eqnphi}
\phi :=\left\{
\begin{array}{cc}
w^{2}-\frac{\mu }{\lambda } & \text{ if }\lambda \neq 0, \\
w^{2} & \text{ if }\lambda =0.
\end{array}
\right.
\end{equation}
Then, if $M$ is compact,
\begin{equation*}
L_{m-2}(\phi )=-2\lambda \phi
\end{equation*}
on the interior of $M$.
\end{corollary}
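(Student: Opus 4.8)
The plan is to derive this corollary immediately from the preceding proposition, which states that $L_{m-2}(w^2) = -2\lambda w^2 + 2\mu$ on any $(\lambda,n+m)$-Einstein manifold. First I observe that $L_{m-2}$ is a linear differential operator which annihilates constants: indeed, from the formula $L_a(u) = \Delta u + aw^{-1}g(\nabla u,\nabla w)$ we see $L_a(c) = 0$ for any constant $c$. So in the case $\lambda \neq 0$, since $\phi = w^2 - \frac{\mu}{\lambda}$ differs from $w^2$ by a constant, we get $L_{m-2}(\phi) = L_{m-2}(w^2) = -2\lambda w^2 + 2\mu = -2\lambda\left(w^2 - \frac{\mu}{\lambda}\right) = -2\lambda\phi$. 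In the case $\lambda = 0$ we have $\phi = w^2$ and the proposition directly gives $L_{m-2}(\phi) = L_{m-2}(w^2) = 2\mu$; but wait, this is $2\mu$, not $-2\lambda\phi = 0$, so some care is needed here.

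Looking more carefully, the $\lambda = 0$ case only works if $\mu = 0$. This is where the compactness hypothesis enters, which is the one genuine (though still easy) step: on a compact $(\lambda,n+m)$-Einstein manifold with $\lambda = 0$, one must check $\mu = 0$. I would argue this by integrating the identity $L_{m-2}(w^2) = 2\mu$ against the measure $\df\mu_{m-2} = w^{m-2}\df\mathrm{vol}_g$ and applying Lemma \ref{IntByParts} with $u = v = w^2$, $a = m-2$: the left side becomes $\int_M w^2 L_{m-2}(w^2)\,\df\mu_{m-2} = -\int_M |\nabla(w^2)|^2\,\df\mu_{m-2} \leq 0$, while it also equals $\int_M 2\mu w^2\,\df\mu_{m-2} = 2\mu \int_M w^{m}\,\df\mathrm{vol}_g \geq 0$ (the integral is positive since $m > 1$ forces $m - 2 > -1$, so the measure is finite, and $w^{m}$ is a nonnegative function positive on the interior). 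Hence $\mu \leq 0$. One needs to verify the boundary term (\ref{boundaryterm}) vanishes; with $v = w^2$ this is $\lim_{x\to\partial M} w^{m} g(\nabla(w^2),\nabla w) = \lim 2w^{m+1}|\nabla w|^2 = 0$ since $w = 0$ on $\partial M$. For the reverse inequality $\mu \geq 0$ when $\partial M \neq \emptyset$ and $m>1$, this is already given by Corollary \ref{cormuboundary}; and when $\partial M = \emptyset$ one can cite \cite{KimKim} or argue directly. Actually the cleanest route: from $\mu \leq 0$ and $\mu = \frac{\scal + (m-n)\lambda}{m}w^2 + (m-1)|\nabla w|^2$ with $\lambda = 0$, combined with the sign analysis, one concludes $\mu = 0$.

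Actually, the simplest uniform approach that covers both cases: integrating $L_{m-2}(\phi) = $ (claimed expression) is overkill. The cleanest presentation is to just note for $\lambda \neq 0$ the computation is the one-line constant-shift argument above, and for $\lambda = 0$ it suffices to show $\mu = 0$ on a compact manifold, which follows from the integration-by-parts argument sketched above (showing $\int |\nabla w^2|^2 \df\mu_{m-2} = 2\mu\int w^m \df\mathrm{vol}_g$, and then $\mu > 0$ would force the left side positive while, combined with $\mu = k w^2 + (m-1)|\nabla w|^2$ evaluated appropriately — or simply noting that $w$ nonconstant makes $\int|\nabla w^2|^2 > 0$ hence $\mu > 0$; but then one derives a contradiction with Theorem \ref{compact} which says a compact $\lambda \leq 0$ example is trivial, i.e. $w$ constant). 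In fact, given Theorem \ref{thmKimKim}/\ref{compact} is already available in the excerpt, the $\lambda = 0$ case is trivial: a compact $(0, n+m)$-Einstein manifold is trivial, so $w$ is constant, $\nabla w = 0$, and then $\mu = kw^2 = 0$ forces... hmm, $k = \scal/m$ and $\scal = n\lambda = 0$, so indeed $\mu = 0$, giving $L_{m-2}(\phi) = L_{m-2}(w^2) = 0 = -2\lambda\phi$. So the whole corollary reduces to: the constant-shift linearity observation plus invoking Theorem \ref{compact} in the $\lambda = 0$ case.

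The main obstacle, such as it is, is bookkeeping the $\lambda = 0$ case correctly — recognizing that there $L_{m-2}(w^2) = 2\mu$ does not automatically equal $0$, and that one needs either an integration-by-parts argument or a citation of Theorem \ref{compact} to force $\mu = 0$ under compactness. Everything else is the trivial observation that $L_{m-2}$ kills additive constants. I expect the author's proof to simply say: "Since $L_{m-2}$ annihilates constants, the claim follows from the Proposition when $\lambda \neq 0$; when $\lambda = 0$, Theorem \ref{compact} gives that $M$ is trivial so $w$ and hence $\mu$ vanish appropriately." I would write it the same way, being explicit about the constant-shift step and the handling of $\lambda=0$.
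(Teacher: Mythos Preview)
Your handling of the $\lambda \neq 0$ case is correct and matches the paper exactly: it is the one-line observation that $L_{m-2}$ kills constants.

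The problem is in the $\lambda = 0$ case. Your preferred route---invoking Theorem~\ref{compact} to conclude the space is trivial and hence $\mu = 0$---is \emph{circular}: this corollary is precisely the input used to prove Theorem~\ref{compact} (the proof there integrates $L_{m-2}(\phi) = -2\lambda\phi$ against $\phi\,\df\mu_{m-2}$). So you cannot cite Theorem~\ref{compact} here. Your alternative integration-by-parts argument does correctly yield $\mu \leq 0$, but you leave the reverse inequality in the $\partial M = \emptyset$ case as ``cite \cite{KimKim} or argue directly,'' which is not a proof.

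The paper's argument for $\mu = 0$ when $\lambda = 0$ is more elementary and avoids integration altogether: it is a pointwise max/min argument. Since $M$ is compact and $w > 0$ on the interior, $w$ attains an interior maximum; there $\nabla w = 0$ and $\Delta w \leq 0$, so the identity $\mu = w\Delta w + (m-1)|\nabla w|^2 + \lambda w^2$ gives $\mu = w\Delta w \leq 0$. If $\partial M = \emptyset$, the interior minimum gives $\mu \geq 0$ by the same reasoning; if $\partial M \neq \emptyset$, Corollary~\ref{cormuboundary} already gives $\mu \geq 0$. Either way $\mu = 0$. This is both shorter than your integration approach and logically prior to Theorem~\ref{compact}.
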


\begin{proof}
In the $\lambda \neq 0$ case this is just a way of re-writing the previous proposition. When $\lambda =0$, the previous proposition tells us that
\begin{equation*}
L_{m-2}(\phi )=2\mu.
\end{equation*}
We want to show $\mu =0$. In the case where the boundary is empty, $w$ must have an interior maximum and minimum, which implies $\mu =0$. On the other hand, if $\partial M\neq \emptyset$ by Corollary \ref{cormuboundary} we know that $\mu \geq 0$. Moreover $w$ is a non-negative function which is zero on the boundary and so must have in interior maximum, which implies $\mu \leq 0$.
\end{proof}

\begin{remark}
We have only used compactness in the $\lambda =0$ case, so the formula is true in general for $\lambda \neq 0$. If $\lambda =0$ we only have $L_{m-2}(w)=2\mu $ in general. There are examples with $\lambda =0 $ and $\mu >0$, so compactness is necessary in this case.
\end{remark}

We can now apply Lemma \ref{IntByParts} to prove the extension of Kim-Kim's theorem to manifolds with boundary.

\begin{proof}[Proof of Theorem \protect\ref{compact}]
We wish to apply Lemma \ref{IntByParts} to $u=v = \phi$ and $a=m-2$. In order to do so we must check that $w^{m-2}\phi \nabla \phi $ goes to zero at the boundary. From the definition of $\phi$, (\ref{eqnphi}), we have
\begin{equation*}
w^{m-2}\phi \nabla \phi =2w^{m-1}\phi \nabla w.
\end{equation*}
The right hand side goes to zero if $m>1$. On the other hand, if $m=1,$ then $\mu =0$ so $\phi=0$ on $\partial M$, and so the quantity also goes to zero in this case. Then by Lemma \ref{IntByParts}
\begin{eqnarray*}
\int_{M}|\nabla \phi |^{2}\df \mu_{m-2} &=&-\int_{M}L_{m-2}(\phi)\phi \df \mu_{m-2}
\\
&=&2\lambda \int_{M}\phi ^{2}\df \mu_{m-2}
\end{eqnarray*}
so $\lambda >0.$
\end{proof}

In fact, we point out that the converse of Kim-Kim's theorem is also true.

\begin{theorem}
A non-trivial $\left( \lambda ,m+n\right) $-Einstein manifold is compact if
and only if $\lambda > 0$.
\end{theorem}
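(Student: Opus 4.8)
One direction is already done: Theorem \ref{compact} shows that a compact non-trivial $(\lambda,m+n)$-Einstein manifold must have $\lambda>0$. So the plan is to establish the converse: if $(M,g,w)$ is a complete non-trivial $(\lambda,m+n)$-Einstein manifold with $\lambda>0$, then $M$ is compact. The natural tool is a Myers-type diameter bound, since the $(\lambda,n+m)$-Einstein equation says precisely that the $m$-Bakry--Émery tensor $\Ric_f^m$ equals $\lambda g$ with $\lambda>0$, where $e^{-f/m}=w$ on $\mathrm{int}(M)$. There is a well-known Myers theorem for $\Ric_f^m \geq \lambda g >0$ with finite $m$ (Qian, and Bakry--Qian): it forces the manifold to be compact with diameter bounded by that of the round sphere of the appropriate Ricci lower bound. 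First I would invoke this; the only subtlety is the boundary.

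So the key steps are: (1) Restrict to $\mathrm{int}(M)$, where $f$ is smooth and $\Ric_f^m=\lambda g$ with $\lambda>0$; conclude via the weighted Myers theorem that $\mathrm{int}(M)$, hence $M$, has bounded diameter. (2) Deal with completeness/boundary: since $M$ is complete as a metric space and has finite diameter, it is compact provided it is, say, proper — which completeness plus finite diameter gives by Hopf--Rinow applied to the interior together with the fact that $\partial M$ is a closed totally geodesic hypersurface (Proposition \ref{propboundary}); alternatively, one runs the comparison argument directly on unit-speed geodesics and notes that a geodesic hitting $\partial M$ simply stops, which only helps the diameter bound. (3) For the $m=1$ case, where $f$ is not literally the log of $w$ in the weighted sense but one has the extra equation $\Delta w=-\lambda w$, I would argue separately: $w$ is then an eigenfunction of the Laplacian on $\mathrm{int}(M)$ with eigenvalue $\lambda>0$ and boundary value $0$, but a cleaner route is to note that $(\lambda,n+1)$-Einstein just means $\Ric - \lambda g = \frac{1}{w}\Hess w$ with $\Delta w = -\lambda w$, trace this to get $\scal = n\lambda$, and then $\Ric \geq \lambda g$ fails in general — so instead use that the $m=2$ Bakry--Émery bound or a direct second-variation argument along a minimizing geodesic, using the ODE $h'' = \frac{1}{m}\Theta h$ from the proof of Proposition \ref{regval}, forces finite length.

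Actually the cleanest uniform approach, covering all $m\geq 1$ including the boundary case, is: run the second-variation/index-form argument of Myers along a minimizing geodesic $\gamma$ in $M$, but weighted by $w^m$ (or, for $m=1$, use the test function coming from $w$ itself). Concretely, if $\gamma$ has length $\ell$ and one plugs suitable test vector fields into the weighted index form associated to $L_{m}$ or uses the Bochner formula for $\Ric_f^m$, one gets $\ell \leq \pi\sqrt{(n+m-1)/\lambda}$ exactly as in Qian's theorem, with the boundary causing no trouble because $w>0$ in the interior and the test fields vanish at the endpoints. Hence $\mathrm{diam}(M)<\infty$, and completeness gives compactness. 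The main obstacle I anticipate is purely bookkeeping around the boundary: one must make sure the weighted Bochner argument is applied on $\mathrm{int}(M)$ and that finiteness of the interior diameter upgrades to compactness of $M$ — here Proposition \ref{regval} ($|\nabla w|\neq 0$ on $\partial M$, so $\partial M$ is a genuine hypersurface and $M$ is a manifold-with-boundary in the honest sense) and the real-analyticity/geodesic-completeness already established in Section 2 do the job.
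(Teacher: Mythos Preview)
Your approach is essentially the paper's: invoke Qian's Myers-type theorem for $\Ric_f^m = \lambda g > 0$ on the interior to bound geodesic lengths, hence the diameter. The paper handles the boundary more directly than you do---since $\partial M$ is totally geodesic (Proposition~\ref{propboundary}), any minimal geodesic between two interior points lies entirely in $\mathrm{int}(M)$, so Qian's comparison applies verbatim along it and no separate $m=1$ argument or ``interior-diameter upgrades to compactness'' step is needed.
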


\begin{proof}
When $m$ is an integer this is a consequence of Myers' theorem applied to
the warped product metric on $E$. When $m$ is not an integer, we can prove
this by applying an extension of Myers' theorem due to Qian \cite{Qian}.
Since the boundary is totally geodesic, we see that the minimal geodesic
between two interior points of $M$ is completely contained in the interior
of $M$. In the interior we also have
\begin{eqnarray*}
\mathrm{Ric}_f^m = \lambda g > 0.
\end{eqnarray*}
The arguments in \cite{Qian} then show that the length of the geodesic is
uniformly bounded above, and so the manifold is compact.
\end{proof}

Using these formulas we can also prove that $\mu>0$ when $\lambda>0$ and $m>1 $.

\begin{proposition}
\label{propmuintegral} \label{propcompactmu} Suppose that $m>1$ and $\lambda
>0$, then
\begin{equation*}
\mu =\lambda \frac{\int_{M}\df \mu_{m}}{\int_{M}\df \mu_{m-2}}.
\end{equation*}
In particular $\mu >0$.
\end{proposition}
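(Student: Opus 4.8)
The plan is to integrate the Kim--Kim identity in its $L_a$-form against the measures $\df\mu_m$ and $\df\mu_{m-2}$ and compare. Recall from the proposition preceding this one that on the interior of $M$ we have $L_{m-2}(w^2) = -2\lambda w^2 + 2\mu$. The idea is to integrate this against $\df\mu_{m-2}$ over $M$: the left-hand side is a divergence-type term $w^{-(m-2)}\mathrm{div}(w^{m-2}\nabla(w^2)) = w^{-(m-2)}\mathrm{div}(2w^{m-1}\nabla w)$, so $\int_M L_{m-2}(w^2)\,\df\mu_{m-2} = \int_M \mathrm{div}(2w^{m-1}\nabla w)\,\df\mathrm{vol}_g$. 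Since $m>1$, the vector field $2w^{m-1}\nabla w$ vanishes on $\partial M$ (using that $w=0$ and $|\nabla w|$ is bounded there, by Proposition \ref{propboundary}), so the divergence theorem kills the boundary term and the integral of the left side is zero. This is really just Lemma \ref{IntByParts} applied with $u=w^2$, $v=1$, $a=m-2$, where the boundary condition (\ref{boundaryterm}) is satisfied because $v=1$ and $w^{m-2}g(\nabla w^2,\nabla w) = 2w^{m-1}|\nabla w|^2 \to 0$.

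Thus $0 = \int_M(-2\lambda w^2 + 2\mu)\,\df\mu_{m-2} = -2\lambda\int_M w^2\,\df\mu_{m-2} + 2\mu\int_M \df\mu_{m-2}$. Now observe that $w^2\,\df\mu_{m-2} = w^2 w^{m-2}\,\df\mathrm{vol}_g = w^m\,\df\mathrm{vol}_g = \df\mu_m$. Hence $0 = -2\lambda\int_M \df\mu_m + 2\mu\int_M\df\mu_{m-2}$, which rearranges to the claimed formula
\[
\mu = \lambda\,\frac{\int_M \df\mu_m}{\int_M\df\mu_{m-2}}.
\]
Both measures are finite: $\df\mu_{m-2}$ is finite since $m-2 > -1$ when $m>1$, and $\df\mu_m$ is finite as $m>0$; both are positive since $w>0$ on the interior. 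Therefore with $\lambda>0$ we conclude $\mu>0$.

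The only point requiring care is the vanishing of the boundary term when $m>1$; for $1<m<2$ the exponent $m-2$ is negative so $\df\mu_{m-2}$ is a genuinely singular (though integrable) weight, and one should justify the divergence theorem by the exhaustion argument already used in the proof of Lemma \ref{IntByParts}: integrate over $U_\varepsilon = \{w\ge\varepsilon\}$ and let $\varepsilon\to 0$, noting that the boundary integrand $w^{m-1}|\nabla w|^2$ over $\partial U_\varepsilon$ tends to $0$ because $w^{m-1}\to 0$ and $|\nabla w|$ stays bounded near $\partial M$ (Proposition \ref{regval} and Proposition \ref{propboundary}). I expect this exhaustion estimate to be the only genuine obstacle; everything else is algebraic manipulation of the identity already established.
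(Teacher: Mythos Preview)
Your proof is correct and follows essentially the same route as the paper: integrate the identity $L_{m-2}(w^{2})=-2\lambda w^{2}+2\mu$ against $\df\mu_{m-2}$, use Lemma~\ref{IntByParts} with $v=1$ and the observation that $w^{m-2}\nabla(w^{2})=2w^{m-1}\nabla w\to 0$ at $\partial M$ when $m>1$, and then rewrite $w^{2}\,\df\mu_{m-2}=\df\mu_{m}$. The paper phrases this via $\phi=w^{2}-\mu/\lambda$ and $L_{m-2}(\phi)=-2\lambda\phi$, but since $\nabla\phi=\nabla(w^{2})$ the boundary check and the integration are identical.
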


\begin{proof}
We have
\begin{equation*}
w^{m-2}\nabla \phi =2w^{m-1}\nabla w
\end{equation*}
which goes to zero at $\partial M$ when $m>1$, so Lemma \ref{IntByParts}
implies
\begin{equation*}
-2\lambda \int_{M}\phi \df\mu_{m-2}=\int_{M}L_{m-2}(\phi )\df\mu_{m-2}=0.
\end{equation*}
From the definition of $\phi$ (\ref{eqnphi}), we have the formula for $\mu$.
\end{proof}

This is also the final step in obtaining the following fact which was referred to in the previous section. The proof appeals to the result in the $\partial M = \emptyset$ case in \cite{Case1}.

\begin{corollary}
\label{TableUTriangular} The only $(\lambda, n+m)$-Einstein metrics with $m>1 $, $\lambda \geq 0$, and $\mu \leq 0$, are the trivial ones with $\lambda = \mu = 0$.
\end{corollary}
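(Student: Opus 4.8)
The plan is to separate the two possibilities $\lambda>0$ and $\lambda=0$ allowed by the hypothesis $\lambda\geq 0$, dispose of the first entirely, and reduce the boundaryless part of the second to the result of Case. \textbf{The case $\lambda>0$.} I would first check that under these hypotheses there are no examples at all. If $(M,g,w)$ were non-trivial, then by the compactness criterion proved above (a non-trivial $(\lambda,n+m)$-Einstein manifold with $\lambda>0$ is compact) Proposition~\ref{propcompactmu} applies and gives
\[
\mu = \lambda\,\frac{\int_M \df\mu_m}{\int_M \df\mu_{m-2}} > 0,
\]
contradicting $\mu\leq 0$. If instead $(M,g,w)$ were trivial, then $w$ is a positive constant and Proposition~\ref{propmu} gives $\mu=\lambda w^2>0$, again a contradiction. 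Hence the hypotheses already force $\lambda=0$.

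\textbf{The case $\lambda=0$.} Here I would first rule out boundary: if $\partial M\neq\emptyset$, Corollary~\ref{cormuboundary} gives $\mu=(m-1)|\nabla w|^2>0$ on $\partial M$, contradicting $\mu\leq 0$. So $\partial M=\emptyset$ and $(M,g,w)$ is a complete boundaryless $(0,n+m)$-Einstein manifold with $\mu\leq 0$, and now the $\partial M=\emptyset$ case of the statement, i.e.\ Case's nonexistence theorem~\cite{Case1}, applies and shows $(M,g,w)$ is trivial; then $w$ is constant, $\Ric=0$, and $\mu=\lambda w^2=0$. (If one also knew $M$ compact, \cite{Case1} could be avoided: $w$ attains an interior maximum and minimum, where $\nabla w=0$, so $L_{m-2}(w^2)=\Delta(w^2)+2(m-2)|\nabla w|^2$ reduces to $\Delta(w^2)$ there, which is $\leq 0$ at the maximum and $\geq 0$ at the minimum; since $L_{m-2}(w^2)=2\mu$ when $\lambda=0$ this forces $\mu=0$, after which $L_{m-2}(w^2)\equiv 0$ and Lemma~\ref{IntByParts} with $u=v=w^2$ yields $\int_M|\nabla(w^2)|^2\df\mu_{m-2}=0$, so $w$ is constant.)

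The only genuinely non-elementary ingredient is the non-compact $\lambda=0$, $\partial M=\emptyset$ case: the compactness criterion shows that a non-trivial example here must be non-compact, so neither a maximum principle at an interior extremum nor integration by parts over all of $M$ is directly available, and this is precisely where one must appeal to \cite{Case1}. Everything else is bookkeeping with the identity for $\mu$ and the facts already established in this section.
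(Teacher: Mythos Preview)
Your proof is correct and follows essentially the same route as the paper: for $\lambda>0$ invoke Proposition~\ref{propcompactmu} (after noting compactness) to get $\mu>0$, and for $\lambda=0$ use Corollary~\ref{cormuboundary} to force $\partial M=\emptyset$ and then appeal to \cite{Case1}. Your explicit split into trivial versus non-trivial in the $\lambda>0$ case and the parenthetical on the compact $\lambda=0$ case are extra care beyond what the paper writes, but the core argument is identical.
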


\begin{proof}
We have just seen that if $\lambda >0$ then $\mu>0$, so we only need to consider the $\lambda = 0$ case. Corollary \ref{cormuboundary} tells us that since $\mu\leq 0$, $\partial M = \emptyset$. Therefore, we are in position to apply the main theorem in \cite{Case1} which says exactly that if $\lambda = 0$, $\partial M \neq \emptyset$, and $\mu \leq 0$, then the space is trivial.
\end{proof}

Returning to the compact case, the formula in Proposition \ref{propmuintegral} also gives us the following.

\begin{corollary}
If $M$ is compact and $m>1$, then
\begin{equation*}
\int_{M}(\mathrm{scal}-n\lambda )\df\mu_{m}=-m(m-1)\int_{M}|\nabla
w|^{2}\df\mu_{m-2}.
\end{equation*}
In particular,
\begin{equation*}
\int_{M}(\mathrm{scal}-n\lambda )\df\mu_{m}\leq 0
\end{equation*}
and is zero if and only if $w$ is constant.
\end{corollary}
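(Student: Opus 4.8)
The plan is to rewrite the left-hand side using the traced $(\lambda,n+m)$-Einstein equation and then integrate by parts once. Tracing $\Hess w=\frac{w}{m}(\Ric-\lambda g)$ gives $\Delta w=\frac{w}{m}(\scal-n\lambda)$, as already recorded, so on $\mathrm{int}(M)$ we have the pointwise identity $(\scal-n\lambda)\,w^{m}=m\,w^{m-1}\Delta w$. Since $\df\mu_{m}=w^{m}\df\mathrm{vol}_{g}$ and both sides of this identity extend continuously to all of $M$, this yields
\begin{equation*}
\int_{M}(\scal-n\lambda)\,\df\mu_{m}=m\int_{M}w^{m-1}\Delta w\,\df\mathrm{vol}_{g}.
\end{equation*}

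Next I would integrate by parts in the right-hand integral. Because $m>1$, the function $w^{m-1}$ is continuous on $M$ and vanishes on $\partial M$, so the boundary term in Green's identity drops out; concretely one runs the same exhaustion argument by $U_{\varepsilon}=\{w\geq\varepsilon\}$ used in the proof of Lemma \ref{IntByParts}, applying the divergence theorem to $w^{m-1}\nabla w$ on $U_{\varepsilon}$ and noting that the boundary contribution over $\partial U_{\varepsilon}$ equals $\varepsilon^{m-1}\int_{\partial U_{\varepsilon}}\langle\nabla w,\nu\rangle=\varepsilon^{m-1}\int_{U_{\varepsilon}}\Delta w\to 0$ as $\varepsilon\to 0$. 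Using $\nabla(w^{m-1})=(m-1)w^{m-2}\nabla w$ then gives
\begin{equation*}
\int_{M}w^{m-1}\Delta w\,\df\mathrm{vol}_{g}=-(m-1)\int_{M}w^{m-2}|\nabla w|^{2}\,\df\mathrm{vol}_{g}=-(m-1)\int_{M}|\nabla w|^{2}\,\df\mu_{m-2},
\end{equation*}
and combining with the previous display yields the stated formula. (Alternatively the same identity falls out of Proposition \ref{propmuintegral} together with formula (\ref{eqnmu}): substituting $\mu=kw^{2}+(m-1)|\nabla w|^{2}$ into $\mu\int_{M}\df\mu_{m-2}=\lambda\int_{M}\df\mu_{m}$ and using $k=\frac{\scal+(m-n)\lambda}{m}$ rearranges to the claim; but the direct computation above avoids the detour, and in particular makes no use of the sign of $\lambda$.)

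Finally, for the ``in particular'' statement, since $m>1$ we have $m(m-1)>0$ and the integrand $|\nabla w|^{2}w^{m-2}$ is nonnegative, so $\int_{M}(\scal-n\lambda)\,\df\mu_{m}\leq 0$; equality forces $|\nabla w|^{2}w^{m-2}\equiv 0$, and since $w>0$ on $\mathrm{int}(M)$ this means $\nabla w\equiv 0$ there, i.e.\ $w$ is constant. The only point that needs care is the legitimacy of the integration by parts when $1<m<2$, where $w^{m-2}$ blows up at $\partial M$; this is exactly the place where the hypothesis $m>1$ is used, since it is equivalent to $m-2>-1$, which makes $w^{m-2}$ (and hence $|\nabla w|^{2}w^{m-2}$, as $|\nabla w|$ is bounded near the boundary by Proposition \ref{regval} and compactness) integrable, so the $U_{\varepsilon}$-exhaustion converges just as in Lemma \ref{IntByParts}. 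I expect this boundary estimate to be the only mild obstacle; the algebra is routine.
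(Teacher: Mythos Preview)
Your proof is correct. Your primary argument is a more direct route than the paper's: you use the traced equation $\Delta w=\tfrac{w}{m}(\scal-n\lambda)$ to write the left-hand side as $m\int_M w^{m-1}\Delta w\,\df\mathrm{vol}_g$ and then integrate by parts once, handling the boundary via the $U_\varepsilon$-exhaustion and the observation that the boundary contribution is $\varepsilon^{m-1}\int_{U_\varepsilon}\Delta w\to 0$. The paper instead passes through the constant $\mu$: it combines the identity $\mu=w\Delta w+(m-1)|\nabla w|^2+\lambda w^2$ with the integrated relation $\mu\int_M\df\mu_{m-2}=\lambda\int_M\df\mu_m$ from Proposition~\ref{propmuintegral} to obtain $\int_M\bigl(w\Delta w+(m-1)|\nabla w|^2\bigr)\df\mu_{m-2}=0$, and then substitutes $\Delta w=\tfrac{w}{m}(\scal-n\lambda)$. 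This is precisely the ``alternative'' you sketch in parentheses. Your direct computation has the advantage of not invoking Proposition~\ref{propmuintegral} (whose statement carries the hypothesis $\lambda>0$, though its underlying integral identity does not really need it), and it makes transparent that the result is nothing more than one integration by parts. The paper's route, on the other hand, ties the identity back to the structural constant $\mu$ and to the eigenfunction-type equation $L_{m-2}(\phi)=-2\lambda\phi$ developed earlier in the section.
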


\begin{proof}
From Proposition \ref{propmu}, we have
\begin{equation*}
\mu =w\Delta w+(m-1)|\nabla w|^{2}+\lambda w^{2}.
\end{equation*}
Then Proposition \ref{propmuintegral} implies that
\begin{equation*}
\int_{M}(w\Delta w+(m-1)|\nabla w|^{2})\df\mu_{m-2}=0.
\end{equation*}
We also have
\begin{equation*}
\Delta w=\frac{w}{m}(\mathrm{scal}-n\lambda ),
\end{equation*}
so
\begin{equation*}
-m(m-1)\int_{M}|\nabla w|^{2}\df\mu_{m-2} = \int_{M}(\mathrm{scal}-n\lambda
)\df\mu_{m}
\end{equation*}
which shows the desired identity.
\end{proof}

\medskip

\section{The Laplacian of the scalar curvature and applications}

In this section we review the formula from \cite{CSW} for the Laplacian of
the scalar curvature. This formula is similar to the formula for gradient
Ricci solitons and we fix notation which emphasizes this similarity and will
lead us to the formulas in the next section. In this section we also verify
that the applications of the formula from \cite{CSW} extend to the boundary
case.

First we recall the formulas for the scalar curvature of a gradient Ricci
soliton.

\begin{proposition}
Let $(M,g,f)$ be a gradient Ricci soliton
\begin{eqnarray*}
\mathrm{Ric} + \mathrm{Hess} f = \lambda g
\end{eqnarray*}
then
\begin{eqnarray*}
\frac{1}{2} \nabla \mathrm{scal} &=& \mathrm{Ric}(\nabla f) \\
\frac{1}{2} \left( \Delta - D_{\nabla f} \right)(\mathrm{scal}) &=& \lambda
\mathrm{scal} - |\mathrm{Ric}|^2
\end{eqnarray*}
\end{proposition}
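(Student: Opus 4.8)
The plan is to derive both identities from the gradient Ricci soliton equation by standard Bochner-type manipulations, exactly as one does for Einstein metrics but keeping track of the extra Hessian term. First I would establish the gradient identity. Start from the contracted second Bianchi identity $\mathrm{div}\,\mathrm{Ric} = \frac{1}{2}\nabla\mathrm{scal}$. Taking the divergence of the soliton equation $\mathrm{Ric} = \lambda g - \mathrm{Hess}f$ gives $\frac{1}{2}\nabla\mathrm{scal} = -\mathrm{div}(\mathrm{Hess}f)$, and the commutation formula $\mathrm{div}(\mathrm{Hess}f) = \nabla\Delta f + \mathrm{Ric}(\nabla f)$ (the Bochner/Ricci-identity for interchanging derivatives) together with the trace of the soliton equation $\Delta f = n\lambda - \mathrm{scal}$ yields $\frac{1}{2}\nabla\mathrm{scal} = -\nabla(n\lambda - \mathrm{scal}) - \mathrm{Ric}(\nabla f) = \nabla\mathrm{scal} - \mathrm{Ric}(\nabla f)$, hence $\frac{1}{2}\nabla\mathrm{scal} = \mathrm{Ric}(\nabla f)$ after rearranging.

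For the second identity I would take the divergence of the first. Writing $\frac{1}{2}\Delta\mathrm{scal} = \mathrm{div}(\mathrm{Ric}(\nabla f)) = \langle\mathrm{div}\,\mathrm{Ric}, \nabla f\rangle + \langle\mathrm{Ric}, \mathrm{Hess}f\rangle$, the first term is $\frac{1}{2}\langle\nabla\mathrm{scal}, \nabla f\rangle = \frac{1}{2}D_{\nabla f}\mathrm{scal}$ by contracted Bianchi again, and the second term, using $\mathrm{Hess}f = \lambda g - \mathrm{Ric}$, is $\lambda\,\mathrm{scal} - |\mathrm{Ric}|^2$. Combining gives $\frac{1}{2}(\Delta - D_{\nabla f})(\mathrm{scal}) = \lambda\,\mathrm{scal} - |\mathrm{Ric}|^2$, as claimed.

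There is essentially no obstacle here — these are classical identities and the computation is a few lines once one has the contracted Bianchi identity and the Ricci commutation formula for $\mathrm{div}(\mathrm{Hess}f)$. The only point requiring mild care is sign bookkeeping in the commutation formula and correctly tracing the soliton equation; I would present the calculation in a coordinate-free way to minimize the chance of error, or alternatively in a local orthonormal frame computing $\sum_i \nabla_i R_{ij}$ directly. Since the paper's purpose in stating this proposition is to set up an analogy with the $(\lambda, n+m)$-Einstein formulas derived later, I would keep the proof short and emphasize the structural parallel (the drift Laplacian $\Delta - D_{\nabla f}$ replacing $\Delta$) rather than dwelling on the computation.
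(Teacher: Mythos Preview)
Your proof is correct and is exactly the standard derivation of these classical soliton identities. Note that the paper does not actually prove this proposition: it is stated at the start of Section~5 as a recalled fact (``First we recall the formulas for the scalar curvature of a gradient Ricci soliton'') to motivate the analogous $(\lambda,n+m)$-Einstein formulas, so there is no paper proof to compare against; your argument via contracted Bianchi and the Ricci commutation formula for $\mathrm{div}(\mathrm{Hess}f)$ is the expected one.
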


For a $(\lambda, m+n)$-Einstein manifold, the scalar curvature is constant
when $m=1$. When $m>1$ we define
\begin{eqnarray*}
\rho (x)&=&\frac{1}{m-1}((n-1)\lambda -\mathrm{scal}) \\
P &=& \mathrm{Ric} - \rho g.
\end{eqnarray*}

The next proposition lists formulas (3.12) and (3.13) of \cite{CSW} in terms
of $\rho$ and $P$.

\begin{proposition}[\protect\cite{CSW}]
Let $(M,g,w)$ be a $(\lambda,n+m)$-Einstein manifold, then
\begin{eqnarray*}
\frac{w}{2}\nabla \rho &=&P(\nabla w) \\
\frac{1}{2}L_{m+1}(\mathrm{scal}) &=&(\lambda -\rho )\mathrm{tr}(P)-|P|^{2}.
\end{eqnarray*}
\end{proposition}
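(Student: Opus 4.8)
The plan is to derive both formulas directly from the $(\lambda,n+m)$-Einstein equation by the same computations that produce the scalar curvature formulas for gradient Ricci solitons, tracking the extra $w$-factors that arise because $\mathrm{Hess}\,w = \tfrac{w}{m}(\mathrm{Ric}-\lambda g)$ rather than $\mathrm{Hess}\,f = -(\mathrm{Ric}-\lambda g)$. First I would recall the contracted second Bianchi identity $\mathrm{div}\,\mathrm{Ric} = \tfrac{1}{2}\nabla\mathrm{scal}$ and the commutation identity relating $\mathrm{div}\,\mathrm{Hess}\,w$ to $\mathrm{Ric}(\nabla w)$ and $\nabla\Delta w$. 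Taking the divergence of $\tfrac{m}{w}\mathrm{Hess}\,w = \mathrm{Ric}-\lambda g$ — or more cleanly, taking the divergence of $\mathrm{Hess}\,w = \tfrac{w}{m}(\mathrm{Ric}-\lambda g)$ and using $\mathrm{div}(w\,T) = w\,\mathrm{div}\,T + T(\nabla w)$ — produces on the left $\mathrm{Ric}(\nabla w)+\nabla\Delta w$ and on the right $\tfrac{w}{2m}\nabla\mathrm{scal} + \tfrac{1}{m}(\mathrm{Ric}-\lambda g)(\nabla w)$. Then substituting $\Delta w = \tfrac{w}{m}(\mathrm{scal}-n\lambda)$ from the traced equation, so that $\nabla\Delta w$ is expressed in terms of $\nabla\mathrm{scal}$, $\nabla w$, and $\mathrm{scal}$, everything rearranges into an identity of the form $(\text{scalar})\nabla w = (\text{scalar})\nabla\mathrm{scal}$. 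Matching this against the definitions $\rho = \tfrac{1}{m-1}((n-1)\lambda-\mathrm{scal})$ and $P = \mathrm{Ric}-\rho g$ — note $\nabla\rho = -\tfrac{1}{m-1}\nabla\mathrm{scal}$ — should collapse the whole thing to $\tfrac{w}{2}\nabla\rho = P(\nabla w)$. The bookkeeping here is the only real work; the choice of $\rho$ and $P$ is precisely engineered to make the soliton-analogous form come out, so I expect it to fall into place.

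For the second formula, the plan is to take the divergence again, this time of the first identity written as $P(\nabla w) = \tfrac{w}{2}\nabla\rho$. On the left, $\mathrm{div}(P(\nabla w)) = \langle \mathrm{div}\,P, \nabla w\rangle + \langle P, \mathrm{Hess}\,w\rangle$; here $\mathrm{div}\,P = \mathrm{div}\,\mathrm{Ric} - \nabla\rho = \tfrac{1}{2}\nabla\mathrm{scal} - \nabla\rho$, which is again a multiple of $\nabla\mathrm{scal}$, so $\langle\mathrm{div}\,P,\nabla w\rangle$ is proportional to $\langle\nabla\mathrm{scal},\nabla w\rangle$, and $\langle P,\mathrm{Hess}\,w\rangle = \tfrac{w}{m}\langle P, \mathrm{Ric}-\lambda g\rangle$. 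Since $\mathrm{Ric}-\lambda g = P + (\rho-\lambda)g$ and $\mathrm{tr}\,P = \mathrm{scal}-n\rho$, this inner product expands to $\tfrac{w}{m}(|P|^2 + (\rho-\lambda)\mathrm{tr}\,P)$. On the right, $\mathrm{div}(\tfrac{w}{2}\nabla\rho) = \tfrac{1}{2}\langle\nabla w,\nabla\rho\rangle + \tfrac{w}{2}\Delta\rho$. Assembling and using $\Delta w$ once more, plus the definition $L_{m+1}(\mathrm{scal}) = \Delta\mathrm{scal} + (m+1)w^{-1}\langle\nabla w,\nabla\mathrm{scal}\rangle$, the terms proportional to $\langle\nabla w,\nabla\mathrm{scal}\rangle$ should be exactly absorbed into the weighted Laplacian, leaving $\tfrac{1}{2}L_{m+1}(\mathrm{scal}) = (\lambda-\rho)\mathrm{tr}\,P - |P|^2$. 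One must be careful that $\Delta\rho$ is a multiple of $\Delta\mathrm{scal}$ (constants aside) so the weight $m+1$ comes out correctly rather than some other value.

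The main obstacle is purely the sign-and-coefficient bookkeeping: there are three distinct places where $\Delta w$ and $\nabla\Delta w$ must be replaced using the traced equation $\Delta w = \tfrac{w}{m}(\mathrm{scal}-n\lambda)$, and the factors of $m-1$, $n-1$, and $w$ all have to conspire to produce the advertised weight $a = m+1$ and the clean right-hand side. A secondary point worth noting, though not really an obstacle, is that these are local computations valid on $\mathrm{int}(M)$ where $w>0$, so dividing by $w$ is legitimate; since the paper only states the identities (and since they appear verbatim as (3.12)--(3.13) of \cite{CSW}), I would either reproduce the calculation in the $\rho,P$ notation or simply cite \cite{CSW} and record the translation. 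Given that the statement is explicitly attributed to \cite{CSW}, the honest "proof" is: perform the divergence computations above, or defer to \cite{CSW} after checking that replacing $(m\mathrm{Ric}-\mathrm{scal}\,g)$-type quantities there with $(m-1)(\rho g - \mathrm{Ric})$ and $P$ yields these forms.
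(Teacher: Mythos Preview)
Your proposal is correct and in fact goes further than the paper itself: the paper does not prove this proposition at all, but simply records it as a restatement of formulas (3.12) and (3.13) of \cite{CSW} in the $\rho$, $P$ notation. Your closing remark --- that the honest ``proof'' is either to carry out the two divergence computations you outline or to cite \cite{CSW} and check the translation --- is exactly the situation, and the paper chooses the latter option without further comment.
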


Applying Lemma \ref{IntByParts} this immediately gives us

\begin{corollary}
On a compact $(\lambda,n+m)$-Einstein manifold with $m>1$,
\begin{equation*}
\int_{M}\left( (\lambda -\rho )\mathrm{tr}\left( P\right) -|P|^{2}\right)
\df\mu_{m+1}=0
\end{equation*}
\end{corollary}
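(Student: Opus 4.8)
The plan is to integrate the pointwise identity
$$
\frac{1}{2}L_{m+1}(\mathrm{scal})=(\lambda-\rho)\mathrm{tr}(P)-|P|^{2}
$$
from the preceding proposition against the measure $\df\mu_{m+1}$ and invoke Lemma \ref{IntByParts} with $u=\mathrm{scal}$, $v\equiv 1$, and $a=m+1$, provided the boundary term vanishes. First I would observe that $v\equiv 1$ gives $\nabla v=0$, so the right-hand side of the conclusion of Lemma \ref{IntByParts} is identically zero, and hence $\int_M L_{m+1}(\mathrm{scal})\,\df\mu_{m+1}=0$; combining this with the pointwise formula immediately yields the desired identity. So the entire content of the proof is checking the hypothesis \eqref{boundaryterm} of Lemma \ref{IntByParts}, namely that $w^{m+1}g(\nabla \mathrm{scal},\nabla w)\to 0$ as $x\to\partial M$.

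To verify this, recall from Proposition \ref{propwarpedproduct}'s surrounding discussion and the real-analyticity proposition that $g$ and $w$ are smooth (indeed real analytic) up to and including the boundary, so $\mathrm{scal}$ is smooth up to $\partial M$ and $\nabla\mathrm{scal}$, $\nabla w$ are bounded near $\partial M$. Since $m>1$ we have $w^{m+1}\to 0$ on $\partial M$ (as $w=0$ there and $m+1>0$), and the product of a vanishing factor with a bounded quantity vanishes, so \eqref{boundaryterm} holds. Alternatively, if one wishes to avoid citing the regularity statement, one can use $\frac{w}{2}\nabla\rho=P(\nabla w)$ together with $\mathrm{scal}=(n-1)\lambda-(m-1)\rho$ to write $w\nabla\mathrm{scal}=-2(m-1)P(\nabla w)$, so $w^{m+1}g(\nabla\mathrm{scal},\nabla w)=-2(m-1)w^{m}g(P(\nabla w),\nabla w)$, and near $\partial M$ the Hessian equation forces $P$ (equivalently $\mathrm{Ric}$) to be controlled while $w^m\to 0$ with $m>0$; again the term vanishes.

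The one point requiring genuine care — the main (mild) obstacle — is precisely this boundary analysis: $L_{m+1}$ is only defined on $\mathrm{int}(M)$ and could a priori blow up at $\partial M$, so one must be sure the weighted integration by parts is legitimate. The remark following Lemma \ref{IntByParts} already flags that for $a=m+1>0$ the condition \eqref{boundaryterm} is automatic once $|\nabla\mathrm{scal}|$ does not blow up at $\partial M$, which is guaranteed by smoothness of the metric up to the boundary (Section 2). Once this is in hand the corollary is a one-line consequence, with no further calculation needed.
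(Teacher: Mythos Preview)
Your proof is correct and follows exactly the approach the paper indicates: integrate the pointwise identity $\tfrac{1}{2}L_{m+1}(\mathrm{scal})=(\lambda-\rho)\mathrm{tr}(P)-|P|^2$ and apply Lemma~\ref{IntByParts} with $u=\mathrm{scal}$, $v=1$, $a=m+1$. Your verification of the boundary condition~\eqref{boundaryterm} (which the paper leaves implicit, citing only the remark that $a>0$ makes it automatic for bounded $|\nabla u|$) is sound and in fact more careful than the paper's one-line justification.
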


It is also useful to re-write the formula for the Laplacian of the scalar
curvature as
\begin{eqnarray*}
\frac{1}{2}L_{m+1}(\mathrm{scal}) &=&(\lambda -\rho )\mathrm{tr}(P)-|P|^{2}
\\
&=&\mathrm{tr}(P)\left( (\lambda -\rho )-\frac{\mathrm{tr}(P)}{n}\right)
-\left\vert P-\frac{\mathrm{tr}(P)}{n}g\right\vert ^{2} \\
&=&\mathrm{tr}(P)\left( \lambda -\frac{\mathrm{scal}}{n}\right) -\left\vert
P-\frac{\mathrm{tr}(P)}{n}g\right\vert ^{2}
\end{eqnarray*}

\begin{proposition}
On a non-trivial compact $(\lambda ,n+m)$-Einstein manifold with $m>1$,
\begin{equation*}
\mathrm{tr}(P)\geq 0.
\end{equation*}
Moreover, if $\mathrm{tr}(P)=0$ at an interior point of $M$ then the metric is $\rho $-Einstein.
\end{proposition}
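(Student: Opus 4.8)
The plan is to run a maximum–principle argument on $\mathrm{tr}(P)$ (equivalently on $\mathrm{scal}$), using the formula for $L_{m+1}(\mathrm{scal})$ established just above together with the fact — already proved in this section — that a non-trivial compact $(\lambda,n+m)$-Einstein manifold has $\lambda>0$. Since $\rho$ is defined pointwise by $(m-1)\rho=(n-1)\lambda-\mathrm{scal}$, one has $\mathrm{tr}(P)=\mathrm{scal}-n\rho=\frac{(m+n-1)\mathrm{scal}-n(n-1)\lambda}{m-1}$, so for $m>1$ the function $\mathrm{tr}(P)$ is an increasing affine function of $\mathrm{scal}$; in particular $\nabla\mathrm{tr}(P)=\frac{m+n-1}{m-1}\nabla\mathrm{scal}$, the two functions have the same critical and minimum points, and $L_{m+1}(\mathrm{tr}(P))=\frac{m+n-1}{m-1}L_{m+1}(\mathrm{scal})$. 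Substituting $\lambda-\frac{\mathrm{scal}}{n}=\frac{nm\lambda-(m-1)\mathrm{tr}(P)}{n(m+n-1)}$ into the rewritten formula for $\frac12 L_{m+1}(\mathrm{scal})$ turns it into a clean equation for $u:=\mathrm{tr}(P)$,
\[
L_{m+1}(u)=\frac{2m\lambda}{m-1}\,u-\frac{2}{n}\,u^{2}-\frac{2(m+n-1)}{m-1}\Big|P-\tfrac{u}{n}g\Big|^{2}.
\]

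Next I would evaluate this identity at a point $p\in M$ where $u$ attains its minimum $u_{0}$. If $p\in\mathrm{int}(M)$ then $\nabla u(p)=0$, so $L_{m+1}(u)(p)=\Delta u(p)\ge 0$, and since the last term is $\le 0$ the identity forces $0\le \tfrac{2}{m-1}u_{0}\bigl(m\lambda-\tfrac{m-1}{n}u_{0}\bigr)$; as $\lambda>0$ this right-hand side is $\ge 0$ exactly for $u_{0}\in[0,\tfrac{nm\lambda}{m-1}]$, so $u_{0}\ge 0$ and $\mathrm{tr}(P)\ge 0$ everywhere. For the second statement, suppose $u(q)=0$ at an interior point $q$; then $q$ is a minimum, and rewriting the identity as $\bigl(L_{m+1}-\tfrac{2m\lambda}{m-1}\bigr)u=-\tfrac{2}{n}u^{2}-\tfrac{2(m+n-1)}{m-1}\bigl|P-\tfrac{u}{n}g\bigr|^{2}\le 0$ on $\mathrm{int}(M)$, an operator whose zeroth–order coefficient $-\tfrac{2m\lambda}{m-1}$ is negative, the strong maximum principle on the connected set $\mathrm{int}(M)$ gives $u\equiv 0$; the identity then forces $\bigl|P-\tfrac{u}{n}g\bigr|\equiv 0$, hence $P\equiv 0$, i.e.\ $\mathrm{Ric}=\rho g$.

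The one delicate point is that the minimum of $u$ may be attained only on $\partial M$ (this really happens, e.g.\ on the disc $\mathbb{D}^{n}$ in Table \ref{TablekappaEinstein}), where the drift term $(m+1)w^{-1}\langle\nabla w,\cdot\rangle$ of $L_{m+1}$ blows up. To handle this I would use that $\nabla w$ is the inward normal there (Proposition \ref{regval} gives $\nabla w\ne 0$ on $\partial M$, and $\nabla w\perp\partial M$ since $w|_{\partial M}=0$): because $u=\mathrm{tr}(P)$ is smooth up to $\partial M$ and $L_{m+1}(u)$ equals a function continuous on all of $M$, boundedness of $\Delta u$ forces $w^{-1}\langle\nabla w,\nabla u\rangle$ to stay bounded near $\partial M$, hence $\langle\nabla w,\nabla u\rangle=O(w)$, hence $\partial_{\nu}u=0$ on $\partial M$. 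Together with the fact that $u|_{\partial M}$ has an interior minimum at $p$, this gives $\nabla u(p)=0$ and $\mathrm{Hess}\,u(p)\ge 0$, and letting $x\to p$ along the inward normal geodesic one computes $\lim_{x\to p}L_{m+1}(u)(x)=\Delta u(p)+(m+1)\mathrm{Hess}\,u(\nu,\nu)(p)\ge 0$; the algebra of the interior case then again yields $u_{0}\ge 0$. (Alternatively, Hopf's lemma would force $\partial_{\nu}u(p)>0$, which is incompatible with $w^{-1}\langle\nabla w,\nabla u\rangle$ remaining bounded near $p$.) If $\partial M=\emptyset$ this last point is vacuous and the argument is the standard interior maximum principle; making the boundary step fully rigorous — verifying that the degenerate drift of $L_{m+1}$ does not spoil the maximum principle — is the part I expect to require the most care.
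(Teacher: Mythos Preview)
Your argument is correct and the algebra checks out, but the paper proceeds differently. Rather than a pointwise maximum principle, the authors use an integral argument: they integrate $L_{m+1}(\mathrm{scal})$ against the weighted measure $\df\mu_{m+1}=w^{m+1}\df\mathrm{vol}_g$ over the sublevel set $U_\varepsilon=\{\mathrm{tr}(P)\le -\varepsilon\}$ for a regular value. The divergence theorem gives a boundary term which is automatically zero on $\partial M$ (because of the factor $w^{m+1}$) and has the right sign on the interior level set $\{\mathrm{tr}(P)=-\varepsilon\}$; on the other hand the rewritten formula shows the integrand is strictly negative on the interior of $U_\varepsilon$, a contradiction. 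For the rigidity statement both proofs invoke the strong minimum principle in the same way.

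The practical difference is exactly the issue you flagged: the degenerate drift of $L_{m+1}$ near $\partial M$. Your route handles it by extracting the Neumann condition $\partial_\nu u=0$ from the boundedness of the right-hand side and then computing the limit of $L_{m+1}(u)$ along the inward normal (which does work, using $\mathrm{Hess}\,w=0$ and total geodesicity of $\partial M$). The paper's route avoids this entirely: the weight $w^{m+1}$ in $\df\mu_{m+1}$ absorbs the singularity and makes the boundary contribution vanish with no extra analysis. So the integral approach is shorter and more robust here, while yours is more direct and makes the role of $\lambda>0$ explicit in the quadratic inequality $u_0\bigl(m\lambda-\tfrac{m-1}{n}u_0\bigr)\ge 0$.
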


\begin{proof}
For $\varepsilon > 0$, set
\begin{eqnarray*}
U_{\varepsilon } &=&\{x\in M:\mathrm{tr}(P)\leq -\varepsilon \} \\
&=&\left\{ x\in M:\mathrm{scal}\leq \frac{n(n-1)}{n+m-1}\lambda -\varepsilon
\right\}.
\end{eqnarray*}
Assume for contradiction that $U_{\varepsilon }$ has non-empty interior and that $\frac{n(n-1)}{n+m-1}\lambda -\varepsilon $ is a regular value for $\mathrm{scal}$. We have
\begin{equation*}
\int_{U_{\varepsilon}}L_{m+1}\left( \mathrm{scal}\right)
\df\mu_{m+1}=-\int_{\partial U_{\varepsilon}}g(\nabla \mathrm{scal},\eta
)w^{m+1}\df \mathrm{vol}_{g},
\end{equation*}
where $\eta$ is the unit outward-pointing normal vector field of $\partial U_{\varepsilon}$. For the right hand side there are two cases. On the one hand, at a point where $x\in \partial M$, $w^{m+1} =0$, so the integrand on the right hand side vanishes. On the other hand, when $x\in \mathrm{int}(M)$, we know that $\eta =-\frac{\nabla \mathrm{scal}}{\left\vert \nabla \mathrm{scal}\right\vert }$. In either case, we see that the right hand side is nonnegative, so
\begin{equation*}
\int_{U_{\varepsilon}}L_{m+1}\left( \mathrm{scal}\right) \df\mu_{m+1}\geq 0.
\end{equation*}

However, we also have
\begin{equation*}
\frac{1}{2}L_{m+1}(\mathrm{scal})=\mathrm{tr}(P)\left( \lambda -\frac{\mathrm{scal}}{n}\right) -\left\vert P-\frac{\mathrm{tr}(P)}{n}g\right\vert^{2}
\end{equation*}
The right hand side is nonpositive on $U_{\varepsilon}$ and negative in the interior of $U_{\varepsilon }$. Therefore, this gives us a contradiction. As the set of regular values for $\mathrm{scal}$ are dense we see that $\mathrm{tr}\left( P\right) \geq 0$ on $M.$

For the last statement, we can apply the strong minimum principle to the
interior minimum to get $\mathrm{tr}(P)=0$ everywhere in $M$. Then the
formula for the Laplacian of scalar curvature gives us
\begin{eqnarray*}
|P|^2 = 0
\end{eqnarray*}
everywhere, and so the metric is $\rho$-Einstein.
\end{proof}

\begin{remark}
The inequality $\mathrm{tr}(P)\geq 0$ is equivalent to $\mathrm{scal}\geq \frac{n(n-1)}{ n+m-1}\lambda $. This is always true when $m=1$, since $\mathrm{scal}=(n-1)\lambda $ in this case.
\end{remark}

In view of the equations we are after in the next section, we also note
another formula involving $P$.

\begin{proposition}
On a $(\lambda ,n+m)$-Einstein manifold $(M,g,w)$,
\begin{equation*}
\mathrm{div}(w^{m+1}P)=0.
\end{equation*}
\end{proposition}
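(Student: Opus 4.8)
The plan is to compute the divergence of the symmetric $2$-tensor $w^{m+1}P$ directly, using the definition $P = \Ric - \rho g$ together with the two identities that have just been established: $\frac{w}{2}\nabla\rho = P(\nabla w)$ and the contracted second Bianchi identity $\mathrm{div}\,\Ric = \frac{1}{2}\nabla\scal$. The key is that the factor $w^{m+1}$ is engineered precisely so that the terms cancel, mirroring the way $\mathrm{div}(e^{-f}\Ric_f^\infty)$ works for gradient Ricci solitons.

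**Key steps in order.** First I would expand using the Leibniz rule for divergence of a scalar times a tensor:
\[
\mathrm{div}(w^{m+1}P) = w^{m+1}\,\mathrm{div}(P) + P(\nabla w^{m+1}) = w^{m+1}\,\mathrm{div}(P) + (m+1)w^{m}\,P(\nabla w).
\]
Second, compute $\mathrm{div}(P) = \mathrm{div}(\Ric) - \mathrm{div}(\rho g) = \frac{1}{2}\nabla\scal - \nabla\rho$, where I use the contracted second Bianchi identity and $\mathrm{div}(\rho g) = \nabla\rho$. Third, recall that $\rho = \frac{1}{m-1}((n-1)\lambda - \scal)$, so $\nabla\rho = -\frac{1}{m-1}\nabla\scal$, hence $\nabla\scal = -(m-1)\nabla\rho$ and therefore
\[
\mathrm{div}(P) = -\frac{m-1}{2}\nabla\rho - \nabla\rho = -\frac{m+1}{2}\nabla\rho.
\]
Fourth, substitute back and use $\frac{w}{2}\nabla\rho = P(\nabla w)$, i.e. $w\nabla\rho = 2P(\nabla w)$:
\[
\mathrm{div}(w^{m+1}P) = -\frac{m+1}{2}w^{m+1}\nabla\rho + (m+1)w^{m}P(\nabla w) = -\frac{m+1}{2}w^{m}\cdot 2P(\nabla w) + (m+1)w^{m}P(\nabla w) = 0.
\]
This holds on the interior of $M$ where $w>0$; if $m>1$ the tensor $w^{m+1}P$ extends continuously (indeed smoothly, by the real-analyticity proposition) to $\partial M$, so the identity holds on all of $M$ by continuity.

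**Main obstacle.** There is no real obstacle here — the proof is a short formal manipulation once the two prior identities are in hand. The only point requiring a little care is bookkeeping the relation between $\nabla\scal$ and $\nabla\rho$ via the definition of $\rho$, and making sure the combinatorial factors $\frac{m-1}{2}$, $1$, and $m+1$ combine correctly; this is precisely why the weight $m+1$ (rather than some other power) appears. One should also note the sign convention $\mathrm{div}$ here is the one for which the contracted Bianchi identity reads $\mathrm{div}\,\Ric = \frac{1}{2}\nabla\scal$, consistent with the sign used in the $\frac{1}{2}L_{m+1}(\scal)$ formula above.
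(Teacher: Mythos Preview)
Your proof is correct and follows essentially the same approach as the paper: expand $\mathrm{div}(w^{m+1}P)$ via the Leibniz rule, use the contracted Bianchi identity $\mathrm{div}\,\Ric = \tfrac{1}{2}\nabla\scal$, convert $\nabla\scal$ to $\nabla\rho$ via the definition of $\rho$, and then use $\tfrac{w}{2}\nabla\rho = P(\nabla w)$ to cancel the remaining terms. The only cosmetic difference is that you first simplify $\mathrm{div}(P)$ to $-\tfrac{m+1}{2}\nabla\rho$ before substituting, whereas the paper carries the separate terms a line longer; the computation is the same.
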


\begin{proof}
This is a consequence of the formula for the gradient of the scalar
curvature and the Bianchi identity.
\begin{eqnarray*}
\mathrm{div}(w^{m+1}P) &=&w^{m+1}\mathrm{div}P+P\left( \nabla w^{m+1}\right)
\\
&=&w^{m+1}\mathrm{div}(\mathrm{Ric})-w^{m+1}\nabla \rho +\left( m+1\right)
w^{m}P\left( \nabla w\right) \\
&=&\frac{1}{2}w^{m+1}\nabla \mathrm{scal}-w^{m+1}\nabla \rho +\frac{\left(
m+1\right) }{2}w^{m+1}\nabla \rho \\
&=&-\frac{m-1}{2}w^{m+1}\nabla \rho -w^{m+1}\nabla \rho +\frac{\left(
m+1\right) }{2}w^{m+1}\nabla \rho \\
&=&0.
\end{eqnarray*}
\end{proof}

\begin{remark}
On a gradient Ricci soliton the corresponding formula is $\mathrm{div}(e^{-f}\mathrm{Ric})=0$.
\end{remark}

\medskip

\section{New formulas for $(\protect\lambda, n+m)$-Einstein manifolds with $m>1 $}

Let $1<m<\infty $ and let $(M,g,w)$ be a $(\lambda,n+m)$-Einstein metric. In
the previous section, motivated by similar formulas for gradient Ricci
solitons, we defined a $(0,2)$-tensor $P$ which satisfies the equations
\begin{eqnarray*}
\frac{w}{2}\nabla \rho &=&P(\nabla w) \\
\frac{1}{2}L_{m+1}(\mathrm{scal}) &=&(\lambda -\rho )\mathrm{tr}(P)-|P|^{2}
\\
\mathrm{div}(w^{m+1}P) &=&0.
\end{eqnarray*}

There is another useful equation on Ricci solitons involving the full
curvature tensor(see \cite{XCao}),
\begin{eqnarray*}
\mathrm{div}(e^{-f} R) = 0.
\end{eqnarray*}
In this section we prove a similar formula for $m<\infty$. That is, we
define a new algebraic curvature tensor, $Q$, which satisfies
\begin{equation}  \label{divQ}
\mathrm{div}(w^{m+1}Q)=0
\end{equation}
and also traces to a multiple of $P$. To this end let
\begin{eqnarray*}
Q &=&R+\frac{2}{m}\mathrm{Ric}\odot g-\frac{(\lambda +\rho )}{m}g\odot g \\
&=&R+\frac{2}{m}P\odot g+\frac{(\rho -\lambda )}{m}g\odot g,
\end{eqnarray*}
where, for two symmetric (0,2)-tensors $s$ and $r$, we define the
Kulkarni-Nomizu product $s\odot r$ to be the $(0,4)$-tensor
\begin{equation*}
(s\odot r)(X,Y,Z,W) = \frac{1}{2}\left( r(X,W)s(Y,Z)+r(Y,Z)s(X,W) -r(X,Z)s(Y,W)-r(Y,W)s(X,Z)\right).
\end{equation*}

The formula for $Q$ arises naturally if we consider conformal changes of the
metric. Namely if we re-write the warped metric $g_{E}$ as
\begin{eqnarray*}
g_{E} &=&g_{M}+w^{2}g_{F} \\
&=&w^{2}\left( w^{-2}g_{M}+g_{F}\right)
\end{eqnarray*}
we see that the metric $w^{-2}g_{M}+g_{F}$ is conformally Einstein. Consider
the metric on the base,
\begin{equation*}
\tilde{g}=w^{-2}g.
\end{equation*}
Which is now a metric that blows up at $\partial M$. For this metric, $Q$ is
the leading order term in the formula for the curvature tensor of $\tilde{g}$
in terms of the curvature of $g$. Namely, by applying the formulas in 1.159
in \cite{Besse} along with the equation for $\mu $ one can show that,
\begin{eqnarray*}
R_{\tilde{g}} &=& w^{-2} Q + \frac{\mu}{m-1} g \odot g.
\end{eqnarray*}

Now we turn our attention to the calculations showing that $Q$ possesses the
properties we are after. First we verify that if we trace $Q$ over $M$ we
obtain a multiple of $P$.

\begin{proposition}
Let $E_i$ be an orthonormal basis, then
\begin{eqnarray*}
\sum_{i=1}^n Q(X, E_i, E_i, Y) &=& \frac{m+n-2}{m}P(X,Y) \\
\sum_{i,j=1}^n Q(E_j, E_i, E_i, E_j) &=& \frac{m+n-2}{m(m-1)} \left( (m+n-1)
\mathrm{scal} -(n(n-1))\lambda \right).
\end{eqnarray*}
\end{proposition}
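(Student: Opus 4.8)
The plan is a direct computation: both identities follow from the definition of $Q$ once one knows how the Ricci-type contraction acts on a Kulkarni--Nomizu product. First I would record the elementary trace formula: for symmetric $(0,2)$-tensors $s,r$ and an orthonormal basis $E_i$,
\[
\sum_{i=1}^n (s\odot r)(X,E_i,E_i,Y) = \frac{1}{2}\left(\mathrm{tr}(s)\,r(X,Y)+\mathrm{tr}(r)\,s(X,Y)-(r\cdot s)(X,Y)-(s\cdot r)(X,Y)\right),
\]
where $(r\cdot s)(X,Y):=\sum_i r(X,E_i)s(E_i,Y)$. This is immediate from the definition of $\odot$ given above by contracting the four terms. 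Specializing, and using $g\cdot P=P\cdot g=P$ and $\mathrm{tr}(g)=n$, one gets
\[
\sum_i (g\odot g)(X,E_i,E_i,Y)=(n-1)g(X,Y),\qquad \sum_i (P\odot g)(X,E_i,E_i,Y)=\tfrac12\mathrm{tr}(P)\,g(X,Y)+\tfrac{n-2}{2}P(X,Y).
\]

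Next I would contract the second expression $Q=R+\tfrac2m P\odot g+\tfrac{\rho-\lambda}{m}g\odot g$, using $\sum_i R(X,E_i,E_i,Y)=\mathrm{Ric}(X,Y)$ together with the two identities above. Writing $\mathrm{Ric}=P+\rho g$ and collecting terms, the coefficient of $P(X,Y)$ works out to $1+\tfrac{n-2}{m}=\tfrac{m+n-2}{m}$, as desired, while the coefficient of $g(X,Y)$ is $\rho+\tfrac1m\mathrm{tr}(P)+\tfrac{(n-1)(\rho-\lambda)}{m}$. The key point is that this last expression vanishes: since $\mathrm{tr}(P)=\mathrm{scal}-n\rho$ and $\rho=\tfrac{1}{m-1}((n-1)\lambda-\mathrm{scal})$, equivalently $\mathrm{tr}(P)=(n-1)\lambda-(m+n-1)\rho$, a one-line cancellation reduces the $g$-coefficient to $\rho-\rho=0$. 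This gives the first identity.

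For the second identity I would simply take a further trace of the first: putting $X=Y=E_j$ and summing over $j$ yields $\sum_{i,j}Q(E_j,E_i,E_i,E_j)=\tfrac{m+n-2}{m}\mathrm{tr}(P)$. It then remains to re-express $\mathrm{tr}(P)$ through the scalar curvature. From $\mathrm{tr}(P)=(n-1)\lambda-(m+n-1)\rho$ and again $\rho=\tfrac{1}{m-1}((n-1)\lambda-\mathrm{scal})$ one computes $(m-1)\mathrm{tr}(P)=(m+n-1)\mathrm{scal}-n(n-1)\lambda$, and substituting gives the stated formula.

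There is no real obstacle here; it is pure bookkeeping. The only points requiring care are getting the factor $\tfrac12$ in the paper's normalization of $\odot$ right, and keeping the contraction/sign convention $\sum_i R(X,E_i,E_i,Y)=\mathrm{Ric}(X,Y)$ consistent with the definition of $Q$. The one structurally meaningful feature is that the vanishing of the $g$-component in the first trace is precisely what pins down the normalization of $\rho$, so the definition of $\rho$ is doing genuine work and is not merely cosmetic.
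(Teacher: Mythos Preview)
Your proof is correct and follows essentially the same direct-contraction approach as the paper. The only cosmetic difference is that the paper contracts the form $Q=R+\tfrac{2}{m}\mathrm{Ric}\odot g-\tfrac{\lambda+\rho}{m}g\odot g$ and then factors out $\tfrac{m+n-2}{m}$ from $\mathrm{Ric}-\rho g$, whereas you contract the equivalent form $Q=R+\tfrac{2}{m}P\odot g+\tfrac{\rho-\lambda}{m}g\odot g$ and verify the $g$-coefficient vanishes; these are the same computation reorganized.
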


\begin{proof}
From the definition of $Q$, we have
\begin{eqnarray*}
\sum_{i=1}^{n}Q(X,E_{i},E_{i},Y) &=&\mathrm{Ric}+\frac{1}{m}\sum_{i=1}^{n}(\mathrm{Ric}(X,Y)g(E_{i},E_{i})+\mathrm{Ric}(E_{i},E_{i})g(X,Y) \\
&&-\mathrm{Ric}(X,E_{i})g(Y,E_{i})-\mathrm{Ric}(Y,E_{i})g(X,E_{i})) \\
&&-\frac{\lambda +\rho }{m}\sum_{i=1}^{n}\left(
g(X,Y)g(E_{i},E_{i})-g(X,E_{i})g(Y,E_{i})\right) \\
&=&\mathrm{Ric}(X,Y)+\frac{1}{m}\left( (n-2)\mathrm{Ric}(X,Y)+\mathrm{scal}g(X,Y)\right) -\frac{\lambda +\rho }{m}(n-1)g(X,Y) \\
&=&\frac{m+n-2}{m}\mathrm{Ric}(X,Y) +\frac{1}{m}\left( \mathrm{scal}-(n-1)\lambda -(n-1)\rho \right) g(X,Y) \\
&=&\frac{m+n-2}{m}\left( \mathrm{Ric}(X,Y)-\rho g(X,Y)\right) .
\end{eqnarray*}
Note that
\begin{equation*}
\mathrm{tr}(P)=(n-1)\lambda -(m+n-1)\rho,
\end{equation*}
which gives the second identity.
\end{proof}

The gradient Ricci soliton equation immediately implies
\begin{equation*}
\left( \nabla _{X}\mathrm{Ric}\right) \left( Y,Z\right) -\left( \nabla _{Y}\mathrm{Ric}\right) \left( X,Z\right) =R\left( X,Y,Z,\nabla f\right) .
\end{equation*}
There is a similar but slightly more complicated formula for $\left( \lambda
,m+n\right) $-Einstein manifolds involving $Q$.

\begin{proposition}
\label{propdiffP} Let $(M,g,w)$ be a $(\lambda ,n+m)$-Einstein manifold,
then
\begin{equation*}
\frac{w}{m}\left( (\nabla _{X}P)(Y,Z)-(\nabla _{Y}P)(X,Z)\right)
=-Q(X,Y,Z,\nabla w)-\frac{1}{m}(g\odot g)\left( X,Y,Z,P(\nabla w)\right) .
\end{equation*}
\end{proposition}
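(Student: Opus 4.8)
The plan is to compute the Codazzi-type difference $(\nabla_X P)(Y,Z) - (\nabla_Y P)(X,Z)$ directly from the $(\lambda,n+m)$-Einstein equation and the definition $P = \Ric - \rho g$, and then recognize the resulting expression as the right-hand side by unwinding the definition of $Q$. First I would write
$$(\nabla_X P)(Y,Z) - (\nabla_Y P)(X,Z) = (\nabla_X \Ric)(Y,Z) - (\nabla_Y \Ric)(X,Z) - (d\rho(X) g(Y,Z) - d\rho(Y) g(X,Z)),$$
so the problem splits into controlling the second Bianchi-type term in $\Ric$ and the $d\rho$ term. The $d\rho$ term is immediately handled by the already-established formula $\tfrac{w}{2}\nabla\rho = P(\nabla w)$ from the previous section, which converts $\tfrac{w}{m}(d\rho(X) g(Y,Z) - d\rho(Y) g(X,Z))$ into exactly $\tfrac{1}{m}(g\odot g)(X,Y,Z,P(\nabla w))$ up to the normalization built into the Kulkarni-Nomizu product as defined in the excerpt — I would double-check the factor of $2$ in that definition against the factor $\tfrac12$ in $\tfrac{w}{2}\nabla\rho$.

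The main work is the term $(\nabla_X \Ric)(Y,Z) - (\nabla_Y \Ric)(X,Z)$. The standard approach is to differentiate the fundamental equation $\Hess w = \tfrac{w}{m}(\Ric - \lambda g)$, i.e. $\nabla^2 w = \tfrac{w}{m} P + \tfrac{w}{m}(\rho - \lambda) g$, or equivalently $m\, \nabla^2 w = w\,\Ric - \lambda w\, g$. Taking a covariant derivative and antisymmetrizing in the first two slots, the left-hand side produces a third covariant derivative of $w$; commuting derivatives there introduces a full curvature term $R(X,Y,\cdot,\nabla w)$ via the Ricci identity $\nabla^3_{X,Y} w - \nabla^3_{Y,X} w = -R(X,Y)\nabla w$ acting on the appropriate slot. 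On the right-hand side, $\nabla_X(w\,\Ric)(Y,Z) - \nabla_Y(w\,\Ric)(X,Z) = w\big((\nabla_X\Ric)(Y,Z) - (\nabla_Y\Ric)(X,Z)\big) + dw(X)\Ric(Y,Z) - dw(Y)\Ric(X,Z)$, and the $\lambda w\, g$ term contributes $\lambda(dw(X)g(Y,Z) - dw(Y)g(X,Z))$. Collecting everything, one solves for $w\big((\nabla_X\Ric)(Y,Z) - (\nabla_Y\Ric)(X,Z)\big)$ in terms of $R(X,Y,Z,\nabla w)$, terms of the form $\Ric\odot g$ contracted against $\nabla w$, and terms $g\odot g$ contracted against $\nabla w$. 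The final step is purely algebraic: substitute $\Ric = P + \rho g$, regroup the $\Ric\odot g$ and $g\odot g$ pieces to match the coefficients $\tfrac{2}{m}$ and $\tfrac{\rho - \lambda}{m}$ appearing in $Q = R + \tfrac{2}{m}P\odot g + \tfrac{\rho-\lambda}{m} g\odot g$, and verify the leftover $g\odot g$ term is exactly $\tfrac{1}{m}(g\odot g)(X,Y,Z,P(\nabla w))$ after using $\tfrac{w}{2}\nabla\rho = P(\nabla w)$ once more.

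The main obstacle I expect is bookkeeping: there are three separate "$s\odot r$"-type contributions (from the curvature commutator, from differentiating the $w\,\Ric$ product, and from the $\lambda w\, g$ term), and they must be reorganized precisely into the Kulkarni-Nomizu combination defining $Q$ plus the single residual $P(\nabla w)$ term. The sign conventions for $R(X,Y,Z,W)$ and for the Ricci identity, together with the factor-$\tfrac12$ normalization of $s\odot r$ used here, are the places where errors are most likely; it is worth checking the whole identity by tracing over $Y,Z$ and confirming consistency with the formula $\tfrac{w}{2}\nabla\rho = P(\nabla w)$ and with $\sum_i Q(X,E_i,E_i,Y) = \tfrac{m+n-2}{m}P(X,Y)$ from the preceding proposition. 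No completeness or compactness hypotheses are needed — this is a pointwise identity valid wherever $w > 0$.
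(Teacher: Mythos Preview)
Your proposal is correct and follows essentially the same route as the paper's proof: both differentiate the fundamental equation $\Hess w = \tfrac{w}{m}(\Ric - \lambda g)$, antisymmetrize to produce the curvature term $R(X,Y,\nabla w,Z)$ via the Ricci identity, and then regroup the resulting $\Ric$, $g$, and $\nabla\rho$ terms into the Kulkarni--Nomizu pieces defining $Q$ together with the residual $\tfrac{1}{m}(g\odot g)(X,Y,Z,P(\nabla w))$. The only cosmetic difference is that the paper substitutes $\Ric - \lambda g = P + (\rho - \lambda)g$ at the outset and carries everything through in one pass, whereas you first isolate the $d\rho$ contribution before treating the $\Ric$ part; the underlying algebra is identical.
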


\begin{proof}
From the $\left( \lambda ,m+n\right) $-Einstein equation, it follows that
\begin{eqnarray*}
R\left( X,Y,\nabla w,Z\right) &=&\left( \nabla _{X}\left( \frac{w}{m}\left(
\mathrm{Ric}-\lambda g\right) \right) \right) \left( Y,Z\right) -\left(
\nabla _{Y}\left( \frac{w}{m}\left( \mathrm{Ric}-\lambda g\right) \right)
\right) \left( X,Z\right) \\
&=&\frac{w}{m}\left( (\nabla _{X}P)(Y,Z)-(\nabla _{Y}P)(X,Z)\right) \\
&&+\frac{1}{m}g\left( X,\nabla w\right) P\left( Y,Z\right) -\frac{1}{m}g\left( Y,\nabla w\right) P\left( X,Z\right) \\
&&-\frac{1}{m}g\left( X,\nabla \left( w\left( \lambda -\rho \right) \right)
\right) g\left( Y,Z\right) +\frac{1}{m}g\left( Y,\nabla \left( w\left(
\lambda -\rho \right) \right) \right) g\left( X,Z\right) \\
&=&\frac{w}{m}\left( (\nabla _{X}P)(Y,Z)-(\nabla _{Y}P)(X,Z)\right) -\frac{\lambda -\rho }{m}(g\odot g)(X,Y,Z,\nabla w) \\
&&+\frac{1}{m}g\left( X,\nabla w\right) P\left( Y,Z\right) -\frac{1}{m}g\left( Y,\nabla w\right) P\left( X,Z\right) \\
&&+\frac{1}{m}g\left( X,w\nabla \rho \right) g\left( Y,Z\right) -\frac{1}{m}g\left( Y,w\nabla \rho \right) g\left( X,Z\right) \\
&=&\frac{w}{m}\left( (\nabla _{X}P)(Y,Z)-(\nabla _{Y}P)(X,Z)\right) -\frac{\lambda -\rho }{m}(g\odot g)(X,Y,Z,\nabla w) \\
&&+\frac{2}{m}\left( P\odot g\right) (X,Y,Z,\nabla w) +\frac{1}{m}(g\odot
g)(X,Y,Z,P\left( \nabla w\right) ).
\end{eqnarray*}
Rearranging the terms then proves the identity.
\end{proof}

Finally we prove the identity involving the divergence of $Q$.

\begin{proposition}
Let $(M,g,w)$ be a $(\lambda ,n+m)$-Einstein manifold, then we have
\begin{equation*}
\mathrm{div}(w^{m+1}Q)=0.
\end{equation*}
\end{proposition}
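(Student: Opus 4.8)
The plan is to compute $\mathrm{div}(w^{m+1}Q)$ directly using the Leibniz rule and then substitute the structural facts established earlier, namely the second contracted Bianchi identity for $R$, Proposition~\ref{propdiffP}, and the divergence-free property $\mathrm{div}(w^{m+1}P)=0$. Writing $\mathrm{div}(w^{m+1}Q) = w^{m+1}\mathrm{div}(Q) + (m+1)w^m \iota_{\nabla w} Q$ (contracting the gradient into the last slot of $Q$), the goal is to show these two contributions cancel. Since $Q = R + \frac{2}{m}P\odot g + \frac{\rho-\lambda}{m} g\odot g$, I would split the calculation into the $R$-part, the $P\odot g$-part, and the $g\odot g$-part.

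First I would recall that for the full curvature tensor the second Bianchi identity contracts to $(\mathrm{div} R)(X,Y,Z) = (\nabla_X \Ric)(Y,Z) - (\nabla_Y \Ric)(X,Z)$, with the appropriate sign convention matching the one used implicitly in Proposition~\ref{propdiffP}. Then I would compute $\mathrm{div}$ of a Kulkarni--Nomizu product $s\odot g$: a standard identity gives $(\mathrm{div}(s\odot g))(X,Y,Z)$ in terms of $(\nabla_X s)(Y,Z) - (\nabla_Y s)(X,Z)$ together with terms involving $\mathrm{div}(s)$, $d(\Tr s)$, and $s$ contracted with $g$. Applying this with $s = P$ and with $s = \rho g$ (for the last term, noting $\rho g \odot g$ and that $\nabla \rho$ is controlled by $P(\nabla w)$ via $\frac{w}{2}\nabla\rho = P(\nabla w)$), I would assemble $w^{m+1}\mathrm{div}(Q)$. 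The key input is Proposition~\ref{propdiffP}, which rewrites exactly the antisymmetrized covariant derivative combination $\frac{w}{m}\big((\nabla_X P)(Y,Z) - (\nabla_Y P)(X,Z)\big)$ that appears, in terms of $Q(X,Y,Z,\nabla w)$ plus a $(g\odot g)(X,Y,Z,P(\nabla w))$ correction. Substituting this should convert the derivative terms into precisely $-(m+1)w^m Q(X,Y,Z,\nabla w)$ up to the algebraic $P(\nabla w)$ terms.

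The remaining task is then bookkeeping: the leftover undifferentiated terms — those coming from $\mathrm{div}(w^{m+1})\cdot(\text{stuff})$, from $\mathrm{div}(w^{m+1}P)=0$ applied inside the $P\odot g$ piece, from $d\Tr(P) = d((n-1)\lambda - (m+n-1)\rho) = -(m+n-1)d\rho$, and from the $P(\nabla w)$ and $\nabla\rho$ substitutions via $\frac{w}{2}\nabla\rho = P(\nabla w)$ — must all cancel. Here $\mathrm{div}(w^{m+1}P)=0$ is used crucially to eliminate the term where the divergence falls on the $P$ factor of $P\odot g$, and the formula $\frac{w}{2}\nabla\rho = P(\nabla w)$ ties together the $\nabla\rho$ terms from the $g\odot g$ piece with the $P(\nabla w)$ correction coming out of Proposition~\ref{propdiffP}. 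I expect the main obstacle to be precisely this final algebraic reconciliation: tracking signs and combinatorial factors in the divergence-of-Kulkarni--Nomizu-product identities and verifying that the coefficients $\frac{2}{m}$, $\frac{\rho-\lambda}{m}$, and $m+1$ conspire to produce an exact cancellation. It is essentially a determined computation once the three ingredients are in place, but the sign and factor discipline is where errors would creep in, so I would organize it by writing $\mathrm{div}(w^{m+1}Q)(X,Y,Z)$ as a single expression and grouping terms by type (curvature-derivative, $P(\nabla w)$, $\nabla\rho$, and pure-$g$) before checking each group vanishes.
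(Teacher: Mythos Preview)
Your plan is correct and matches the paper's proof essentially line for line: the paper also splits $Q$ into its $R$, $P\odot g$, and $(\rho-\lambda)g\odot g$ pieces, computes the divergence of each (using $\mathrm{div}(w^{m+1}P)=0$ in the form $w\,\mathrm{div}P=-(m+1)P(\nabla w,\cdot)$ for the middle piece and $\frac{w}{2}\nabla\rho=P(\nabla w)$ for the last), and then invokes Proposition~\ref{propdiffP} together with the second Bianchi identity to convert the $\mathrm{div}R$ term into $-mQ(X,Y,Z,\nabla w)+(g\odot g)(X,Y,Z,P(\nabla w))$, after which the algebra closes to $w\,\mathrm{div}Q=-(m+1)Q(\cdot,\cdot,\cdot,\nabla w)$. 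One small simplification: in the divergence of $s\odot g$ (contracted in the last slot) no $d(\Tr s)$ term actually appears, so your bookkeeping will be a bit lighter than you anticipate.
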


\begin{proof}
It is convenient for the proof to contract against the fourth variable in $Q$
keeping $X,Y,Z$ in their natural places for the curvature tensor. The
statement then is equivalent to
\begin{equation*}
w\mathrm{div}Q\left( X,Y,Z\right) =-\left( m+1\right) Q\left( X,Y,Z,\nabla
w\right).
\end{equation*}

We start with
\begin{eqnarray*}
w\frac{2}{m}(\mathrm{div}(P\odot g))(X,Y,Z) &=&\frac{w}{m}(\mathrm{div} P)(X))g(Y,Z)-\frac{w}{m}(\mathrm{div}P)(Y)g(X,Z) \\
& & +\frac{w}{m}(\nabla_{X}P)(Y,Z)-\frac{w}{m}(\nabla _{Y}P)(X,Z) \\
&=&-\frac{m+1}{m}P\left( X,\nabla w\right) g\left( Y,Z\right) +\frac{m+1}{m} P\left( Y,\nabla w\right) g\left( X,Z\right) \\
&&-\left( Q(X,Y,Z,\nabla w)+\frac{1}{m}(g\odot g)\left( X,Y,Z,P(\nabla
w)\right) \right) \\
&=&-Q(X,Y,Z,\nabla w)-\frac{m+2}{m}(g\odot g)\left( X,Y,Z,P(\nabla w)\right)
\end{eqnarray*}
and
\begin{eqnarray*}
\frac{w}{m}(\mathrm{div}(\left( \rho -\lambda \right) g\odot g))(X,Y,Z) &=& \frac{w}{m}(g\odot g)(X,Y,Z,\nabla \rho ) \\
&=&\frac{2}{m}(g\odot g)(X,Y,Z,P\left( \nabla w\right) ),
\end{eqnarray*}
which gives us
\begin{equation*}
w(\mathrm{div}Q)(X,Y,Z)=w(\mathrm{div}R)(X,Y,Z)-Q(X,Y,Z,\nabla w)-(g\odot
g)\left( X,Y,Z,P(\nabla w)\right).
\end{equation*}
From Proposition \ref{propdiffP} we have
\begin{eqnarray*}
w(\mathrm{div}R)(X,Y,Z) &=&w(\nabla _{X}\mathrm{Ric})(Y,Z)-w(\nabla _{Y} \mathrm{Ric})(X,Z) \\
&=&w(\nabla _{X}P)(Y,Z)-w(\nabla _{Y}P)(X,Z) +wg\left( X,\nabla \rho \right) g\left( Y,Z\right) -wg\left( Y,\nabla \rho \right) g\left( X,Z\right) \\
&=&w(\nabla _{X}P)(Y,Z)-w(\nabla _{Y}P)(X,Z)+w(g\odot g)\left( X,Y,Z,\nabla
\rho \right) \\
&=&w(\nabla _{X}P)(Y,Z)-w(\nabla _{Y}P)(X,Z)+2(g\odot g)\left( X,Y,Z,P\left(
\nabla w\right) \right) \\
&=&-mQ(X,Y,Z,\nabla w)-(g\odot g)\left( X,Y,Z,P(\nabla w)\right) +2(g\odot g)\left( X,Y,Z,P\left( \nabla w\right) \right) \\
&=&-mQ(X,Y,Z,\nabla w)+(g\odot g)\left( X,Y,Z,P(\nabla w)\right)
\end{eqnarray*}
and hence the result follows.
\end{proof}

\medskip

\section{Proof of Theorem \protect\ref{thmharmonic-Weyl}}

In this section we apply the calculations in the previous section to prove
Theorem \ref{thmharmonic-Weyl}. First we recall some definitions.

\begin{definition}
Let $n\geq 3$ and let $(M^n,g)$ be a Riemannian manifold. The \emph{Schouten
tensor} is the $(0,2)$-tensor
\begin{eqnarray*}
S = \mathrm{Ric} - \frac{\mathrm{scal}}{2(n-1)} g.
\end{eqnarray*}
We say $(M^n, g) $ has \emph{harmonic Weyl tensor} if $S$ is a Codazzi
tensor, i.e.,
\begin{eqnarray*}
(\nabla_X S)(Y,Z) = (\nabla_Y S)(X,Z) \qquad \mbox{for any } X, Y, Z.
\end{eqnarray*}
\end{definition}

\begin{remark}
In dimension three, harmonic Weyl tensor is equivalent to $(M^{3},g)$ being
locally conformally flat. If $n>3$, the Weyl tensor is defined via the
formula
\begin{equation*}
R=W+\frac{2}{n-2}\mathrm{Ric}\odot g-\frac{\mathrm{scal}}{(n-1)(n-2)}g\odot
g,
\end{equation*}
and, as the language suggests, $\mathrm{div}(W) = 0$ if and only if $M$ has
harmonic Weyl tensor. Recall that when $n=3$, $W=0$.
\end{remark}

\begin{remark}
Another equivalent formulation is $M$ has harmonic Weyl tensor if and only
if
\begin{eqnarray*}
\mathrm{div} R(X,Y,Z) &=& \frac{1}{2(n-1)} (g \odot g)(X,Y,Z, \nabla \mathrm{scal}).
\end{eqnarray*}
\end{remark}

Now we see how harmonic Weyl tensor affects the formulas between $P$ and $Q$.

\begin{proposition}
If $(M,g,w)$ is a $(\lambda ,n+m)$-Einstein manifold and has harmonic Weyl
tensor, then
\begin{eqnarray}
Q(X,Y,Z,\nabla w) &=&\frac{m+n-2}{m(n-1)}\left( P(\nabla w,X)g(Y,Z)-P(\nabla
w,Y)g(X,Z)\right)  \notag \\
&=&\frac{m+n-2}{m(n-1)}(g\odot g)\left( X,Y,Z,P\left( \nabla w\right) \right)
\label{divWP}
\end{eqnarray}
\end{proposition}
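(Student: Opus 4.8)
The plan is to combine three facts already established for $(\lambda,n+m)$-Einstein manifolds: the formula in Proposition \ref{propdiffP} relating $(\nabla_X P)(Y,Z)-(\nabla_Y P)(X,Z)$ to $Q(X,Y,Z,\nabla w)$ and $(g\odot g)(X,Y,Z,P(\nabla w))$; the formula $R = W + \frac{2}{n-2}\mathrm{Ric}\odot g - \frac{\mathrm{scal}}{(n-1)(n-2)}g\odot g$ decomposing the curvature into its Weyl and Ricci parts; and the hypothesis that $S$ is Codazzi, equivalently that the radial Weyl term $W(\cdot,\cdot,\cdot,\nabla w)$ can be read off from the Codazzi defect of the Schouten tensor. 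The strategy is to express $Q(X,Y,Z,\nabla w)$ in terms of $W(X,Y,Z,\nabla w)$ plus $(g\odot g)$-type terms coming from the Ricci (equivalently $P$) part, and then show the Weyl contribution vanishes and the remaining $(g\odot g)$ terms collapse to the stated multiple of $(g\odot g)(X,Y,Z,P(\nabla w))$.

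First I would write $Q = W + (\text{Ricci/scalar }g\odot g\text{ terms})$: since $Q = R + \frac{2}{m}\mathrm{Ric}\odot g - \frac{\lambda+\rho}{m}g\odot g$ and $R = W + \frac{2}{n-2}\mathrm{Ric}\odot g - \frac{\mathrm{scal}}{(n-1)(n-2)}g\odot g$, we get $Q = W + \left(\frac{2}{m}+\frac{2}{n-2}\right)\mathrm{Ric}\odot g + (\text{scalar})\, g\odot g$. Contracting against $\nabla w$ in the fourth slot, the $\mathrm{Ric}\odot g$ term produces $\mathrm{Ric}(\nabla w,\cdot)\,g(\cdot,\cdot)$-type expressions; rewriting $\mathrm{Ric} = P + \rho g$ turns these into $(g\odot g)(X,Y,Z,P(\nabla w))$ plus further pure $g\odot g$ terms. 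So modulo the Weyl piece, $Q(X,Y,Z,\nabla w)$ is automatically of the form $c_1 (g\odot g)(X,Y,Z,P(\nabla w)) + c_2(x)(g\odot g)(X,Y,Z,\nabla w)$ for explicit constants $c_1$ and a function $c_2$.

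Next I would dispose of the two unwanted pieces. The Weyl term $W(X,Y,Z,\nabla w)$ vanishes by hypothesis (this is exactly the assumption $W(\nabla w,\cdot,\cdot,\nabla w)=0$ together with harmonic Weyl tensor — more precisely, harmonic Weyl tensor lets one use the contracted Bianchi-type identity for $W$, and combined with the algebraic symmetries one gets $W(X,Y,Z,\nabla w)=0$; alternatively one invokes the equivalent "$\mathrm{div}R$ equals a $g\odot g$ term" reformulation stated in the remark above). To eliminate the $c_2(x)(g\odot g)(X,Y,Z,\nabla w)$ term, I would feed the result back into Proposition \ref{propdiffP}: that proposition expresses the Codazzi defect of $P$ in terms of $Q(X,Y,Z,\nabla w)$, while harmonic Weyl tensor gives an independent expression for the Codazzi defect of $P$ (since $P$ and $S$ differ by a multiple of $\mathrm{scal}\cdot g$, their Codazzi defects differ by a $g\odot g\,(\cdot,\nabla\mathrm{scal})$ term, and $\nabla\mathrm{scal}$ is a multiple of $P(\nabla w)$ by the gradient formula $\frac{w}{2}\nabla\rho = P(\nabla w)$). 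Matching the two expressions forces the $\nabla w$-radial part to have the stated coefficient and pins down $c_2$. Finally I would check $c_1 = \frac{m+n-2}{m(n-1)}$ by a direct trace consistency check: contracting (\ref{divWP}) over $X,Z$ must reproduce $\sum_i Q(X,E_i,E_i,Y) = \frac{m+n-2}{m}P(X,Y)$ from the earlier proposition, and $(g\odot g)(X,Y,Z,P(\nabla w))$ contracted the same way gives $(n-1)P(\nabla w,\cdot)$, which fixes the constant. The main obstacle is bookkeeping the several $g\odot g$ terms with different scalar coefficients and confirming they cancel except for the single surviving one; the conceptual input — harmonic Weyl kills the Weyl part and the gradient-of-$\rho$ formula converts $\nabla\mathrm{scal}$ into $P(\nabla w)$ — is straightforward, but the coefficient arithmetic needs care.
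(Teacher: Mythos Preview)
There is a genuine gap in your first main step. You want to kill the term $W(X,Y,Z,\nabla w)$ for \emph{arbitrary} $X,Y,Z$, but the hypotheses do not give this. The proposition assumes only harmonic Weyl tensor; the radial condition $W(\nabla w,\cdot,\cdot,\nabla w)=0$ is a \emph{separate} hypothesis introduced only later, in Lemma~\ref{lemmaEigen}, and is not part of the present statement. Even granting both, you would only control $W$ when one of the outer slots equals $\nabla w$, not for general $X,Y,Z$. Harmonic Weyl means $\mathrm{div}\,W=0$, a first-order differential condition; it imposes no pointwise vanishing of $W$ in the $\nabla w$ direction. So the decomposition $Q=W+(\text{Ricci and scalar terms})$ leaves an uncontrolled Weyl piece, and your scheme of reading off constants $c_1,c_2$ from what remains cannot proceed.

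The paper's argument is much shorter and bypasses the Weyl decomposition entirely. In the proof of $\mathrm{div}(w^{m+1}Q)=0$ the identity
\[
w\,(\mathrm{div}R)(X,Y,Z)\;=\;-m\,Q(X,Y,Z,\nabla w)\;+\;(g\odot g)\bigl(X,Y,Z,P(\nabla w)\bigr)
\]
was already established. Harmonic Weyl tensor is then used in exactly the form you mention in your parenthetical ``alternatively'', namely $\mathrm{div}R(X,Y,Z)=\frac{1}{2(n-1)}(g\odot g)(X,Y,Z,\nabla\mathrm{scal})$. Since $\mathrm{scal}=(n-1)\lambda-(m-1)\rho$ and $\frac{w}{2}\nabla\rho=P(\nabla w)$, this gives $w\,\mathrm{div}R=-\frac{m-1}{n-1}(g\odot g)(\cdot,\cdot,\cdot,P(\nabla w))$, and equating the two expressions for $w\,\mathrm{div}R$ immediately yields the coefficient $\frac{m+n-2}{m(n-1)}$. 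Equivalently, one can compute the Codazzi defect of $P$ directly from that of $S$ (as you suggest in your ``feed back'' step) and plug into Proposition~\ref{propdiffP}; that is the same computation. Your ``alternatively'' is therefore not a second route to killing $W(X,Y,Z,\nabla w)$ --- it \emph{is} the entire argument, and the $Q=W+\cdots$ decomposition should simply be dropped.
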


\begin{proof}
From Proposition \ref{propdiffP}, we have
\begin{equation*}
w(\mathrm{div}R)(X,Y,Z)=-mQ(X,Y,Z,\nabla w)+(g\odot g)\left( X,Y,Z,P(\nabla
w)\right).
\end{equation*}
On the other hand we have
\begin{eqnarray*}
w(\mathrm{div}R)(X,Y,Z) &=&w\frac{1}{2(n-1)}(g\odot g)(X,Y,Z,\nabla \mathrm{scal}) \\
&=&-w\frac{m-1}{2\left( n-1\right) }(g\odot g)(X,Y,Z,\nabla \rho ) \\
&=&-\frac{m-1}{n-1}(g\odot g)(X,Y,Z,P\left( \nabla w\right) ).
\end{eqnarray*}
These two equations combine to give the desired identity.
\end{proof}

This gives us the following corollary, a similar lemma for Ricci solitons is
proven in \cite{FLGR}.

\begin{corollary}
If $(M,g,w)$ is a $(\lambda ,n+m)$-Einstein manifold and has harmonic Weyl
tensor then, at a point where $\nabla w\neq 0$, $\nabla w$ is an eigenvector
for $P$. Moreover, if $X,Y,Z\perp \nabla w$ then
\begin{eqnarray}  \label{divWQ}
Q(X,Y,Z,\nabla w) &=&0 \\
Q(\nabla w,Y,Z,\nabla w) &=&\frac{m+n-2}{m(n-1)}P(\nabla w,\nabla w)g(Y,Z).
\end{eqnarray}
\end{corollary}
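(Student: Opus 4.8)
The plan is to extract both conclusions purely algebraically from the identity (\ref{divWP}) of the preceding proposition, namely
\[
Q(X,Y,Z,\nabla w)=\frac{m+n-2}{m(n-1)}\left(P(\nabla w,X)g(Y,Z)-P(\nabla w,Y)g(X,Z)\right),
\]
together with the fact that $Q$ was constructed as an algebraic curvature tensor, so in particular it is antisymmetric in its last two arguments. The harmonic Weyl hypothesis has already done all of its work in producing (\ref{divWP}); from here it is a short symmetry manipulation.

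First I would prove the eigenvector statement. Setting $Z=\nabla w$ in the displayed identity, the left-hand side $Q(X,Y,\nabla w,\nabla w)$ vanishes by the skew-symmetry of $Q$ in its third and fourth slots, so for all $X,Y$ we get $P(\nabla w,X)\,g(Y,\nabla w)=P(\nabla w,Y)\,g(X,\nabla w)$. Choosing $Y=\nabla w$ and dividing by $|\nabla w|^{2}\neq 0$ (which holds at the point under consideration) yields $P(\nabla w,X)=\dfrac{P(\nabla w,\nabla w)}{|\nabla w|^{2}}\,g(\nabla w,X)$ for every $X$; equivalently $P(\nabla w)=\alpha\,\nabla w$ with $\alpha=P(\nabla w,\nabla w)/|\nabla w|^{2}$, so $\nabla w$ is an eigenvector of $P$.

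For the two remaining formulas I would substitute $P(\nabla w)=\alpha\,\nabla w$ back into the displayed identity, so that $P(\nabla w,V)=\alpha\,g(\nabla w,V)$ for every $V$. If $X,Y\perp\nabla w$ then both $P(\nabla w,X)$ and $P(\nabla w,Y)$ vanish, hence $Q(X,Y,Z,\nabla w)=0$, which is the first identity (this in fact only uses $X,Y\perp\nabla w$). For the second, take $X=\nabla w$ with $Y,Z\perp\nabla w$: the right-hand side becomes $\frac{m+n-2}{m(n-1)}\left(P(\nabla w,\nabla w)g(Y,Z)-P(\nabla w,Y)g(\nabla w,Z)\right)$, and the second summand vanishes since $Y\perp\nabla w$, leaving $Q(\nabla w,Y,Z,\nabla w)=\frac{m+n-2}{m(n-1)}P(\nabla w,\nabla w)g(Y,Z)$.

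I do not expect a genuine obstacle here. The only step that requires a moment's thought is recognizing that feeding $Z=\nabla w$ into (\ref{divWP}) and invoking the skew-symmetry of the curvature tensor $Q$ is exactly what forces $\nabla w$ to be an eigendirection of $P$; after that, everything is bookkeeping with the Kulkarni-Nomizu product $g\odot g$ and the orthogonality assumptions.
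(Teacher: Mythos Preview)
Your proof is correct and follows essentially the same approach as the paper: set $Z=\nabla w$ in (\ref{divWP}), use the skew-symmetry of $Q$ in its last two slots to kill the left-hand side, deduce that $\nabla w$ is an eigenvector of $P$, and then read off the two remaining identities by plugging the eigenvector property back into (\ref{divWP}). Your write-up is in fact a bit more explicit than the paper's (which leaves the skew-symmetry step implicit), and your observation that the first identity only needs $X,Y\perp\nabla w$ is a correct minor sharpening.
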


\begin{proof}
To see that $\nabla w$ is an eigenvector for $P$ set $Z=\nabla w$ in (\ref{divWP}) to obtain
\begin{equation*}
P(\nabla w,X)g(\nabla w,Y)-P(\nabla w,Y)g(X,\nabla w)=0 \quad \mbox{for any }
X,Y.
\end{equation*}
Now we know that $\nabla w$ is an eigenvector and $P(X, \nabla w) = 0$ when $X \perp \nabla w$. Combining this again with (\ref{divWP}) gives the other two formulas.
\end{proof}

\begin{remark}
\label{remHess} $\nabla w$ is an eigenfield for $P$ if and only if $\nabla w$ is an eigenfield for $\mathrm{Hess}w$. This implies that $w$ is rectifiable, i.e., $|\nabla w|^{2}$ is constant on the connected components of the level sets of $w$, since, if $X\perp \nabla w$, then
\begin{equation*}
D_{X}|\nabla w|^{2}=2\mathrm{Hess}w(\nabla w,X)=0.
\end{equation*}
In particular, the connected components of the regular levels sets for $w$ form a Riemannian foliation of an open subset of $M.$
\end{remark}

Following the soliton proof in \cite{CaoChen}, we consider the Weyl tensor
in order to get control on the other eigenvalues of $P$. In the next
proposition we record the decomposition of $Q$ in terms of the Weyl tensor.

\begin{proposition}
If $(M,g,w)$ is a $(\lambda ,n+m)$-Einstein manifold and has harmonic Weyl
tensor with $m>1$, then
\begin{equation}  \label{Qdecomp}
Q=W+\frac{2(n+m-2)}{m(n-2)}(P\odot g)-\frac{n+m-2}{m(n-1)(n-2)}\mathrm{tr}\left(P\right)(g\odot g).
\end{equation}
\end{proposition}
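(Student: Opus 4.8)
The plan is to reduce the statement to a routine manipulation of Kulkarni--Nomizu products, using only the definition of $Q$, the standard Weyl decomposition, and the two scalar identities already recorded above. First I would start from the definition
\[
Q=R+\frac{2}{m}\mathrm{Ric}\odot g-\frac{\lambda +\rho }{m}g\odot g
\]
and substitute the decomposition of the curvature tensor in terms of the Weyl tensor,
\[
R=W+\frac{2}{n-2}\mathrm{Ric}\odot g-\frac{\mathrm{scal}}{(n-1)(n-2)}g\odot g,
\]
which is valid for every $n\geq 3$ (with $W=0$ when $n=3$). Combining the two $\mathrm{Ric}\odot g$ terms produces the coefficient $\frac{1}{n-2}+\frac{1}{m}=\frac{m+n-2}{m(n-2)}$, so that
\[
Q=W+\frac{2(m+n-2)}{m(n-2)}\,\mathrm{Ric}\odot g-\left(\frac{\mathrm{scal}}{(n-1)(n-2)}+\frac{\lambda +\rho }{m}\right)g\odot g.
\]

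Next I would replace $\mathrm{Ric}=P+\rho g$, which turns $\mathrm{Ric}\odot g$ into $P\odot g+\rho \,(g\odot g)$ and hence produces the desired $P\odot g$ term together with one more $g\odot g$ contribution. It then remains to check that the total coefficient of $g\odot g$, namely
\[
\frac{2(m+n-2)\rho }{m(n-2)}-\frac{\mathrm{scal}}{(n-1)(n-2)}-\frac{\lambda +\rho }{m},
\]
equals $-\frac{n+m-2}{m(n-1)(n-2)}\mathrm{tr}(P)$. For this I would clear the common denominator $m(n-1)(n-2)$ and substitute the identities $\mathrm{scal}=(n-1)\lambda -(m-1)\rho$ (equivalently $(m-1)\rho =(n-1)\lambda -\mathrm{scal}$, which is how $\rho$ was defined) and $\mathrm{tr}(P)=(n-1)\lambda -(m+n-1)\rho$. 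Collecting the coefficients of $\lambda $ and of $\rho $ separately, the $\lambda $-coefficient comes out to $-(n-1)(m+n-2)$ and the $\rho $-coefficient to $(m+n-1)(m+n-2)$; since $-(m+n-2)\mathrm{tr}(P)=-(m+n-2)\big((n-1)\lambda -(m+n-1)\rho \big)$ has exactly these coefficients, the identity follows.

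This computation has no genuine obstacle; the only place one can slip is the last step, where the $\rho $-coefficient is a quadratic expression in $m+n$ that must be recognized as $(m+n-1)(m+n-2)$, which I would double-check by writing it as $(m+n)^{2}-3(m+n)+2$. I would also note that the harmonic-Weyl hypothesis plays no role in this purely algebraic identity; it is carried along only because the proposition is used in that setting. What is genuinely needed is $m>1$, which is required for $\rho $ and hence $P$ to be defined, together with $n\geq 3$, implicit in the appearance of $W$.
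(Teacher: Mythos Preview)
Your proof is correct and follows essentially the same route as the paper: substitute the Weyl decomposition of $R$ into the definition of $Q$, collect the $P\odot g$ terms, and then verify the scalar coefficient of $g\odot g$ using the identities $\mathrm{scal}=(n-1)\lambda-(m-1)\rho$ and $\mathrm{tr}(P)=(n-1)\lambda-(m+n-1)\rho$. The only cosmetic difference is that the paper starts from the form $Q=R+\frac{2}{m}P\odot g+\frac{\rho-\lambda}{m}g\odot g$ and writes the Weyl decomposition already in terms of $P$, whereas you keep $\mathrm{Ric}$ until the end; your observation that the harmonic-Weyl hypothesis is not actually used in this identity is also correct.
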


\begin{proof}
We have
\begin{eqnarray*}
Q &=&R+\frac{2}{m}P\odot g+\frac{\rho -\lambda }{m}g\odot g \\
R &=&W+\frac{2}{n-2}P\odot g+\left( \frac{2\rho }{n-2}-\frac{\mathrm{scal}}{(n-1)(n-2)}\right) g\odot g
\end{eqnarray*}
Putting these together and using that
\begin{eqnarray*}
\mathrm{scal} &=&(n-1)\lambda -(m-1)\rho \\
\mathrm{tr}(P) &=&-(m+n-1)\rho +(n-1)\lambda
\end{eqnarray*}
gives us
\begin{eqnarray*}
Q &=&W+\frac{2(m+n-2)}{m(n-2)}(P\odot g) \\
& & +\left( \frac{((n+2m-2)(n-1)+m(m-1))\rho }{m(n-1)(n-2)}-\frac{(n+m-2)\lambda }{m(n-2)}\right) g\odot g \\
&=&W+\frac{2(m+n-2)}{m(n-2)}(P\odot g) +\frac{m+n-2}{m(n-1)(n-2)}\left((m+n-1)\rho -(n-1)\lambda \right) g\odot g \\
&=&W+\frac{2(m+n-2)}{m(n-2)}(P\odot g)-\frac{m+n-2}{m(n-1)(n-2)}\mathrm{tr}(P)g\odot g.
\end{eqnarray*}
\end{proof}

\begin{lemma}
\label{lemmaEigen} If $M$ is a $(\lambda ,n+m)$-Einstein manifold with
harmonic Weyl tensor and $W\left( \nabla w,Y,Z,\nabla w\right) =0$, then at
a point $p$ where $\nabla w\neq 0$, $P$ (or $\mathrm{Ricci}$) has at most
two eigenvalues, and if it has two eigenvalues then one has multiplicity 1
with eigenvector $\nabla w$, and the other with multiplicity $n-1$.
\end{lemma}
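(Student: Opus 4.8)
The plan is to exploit the hypothesis $W(\nabla w,Y,Z,\nabla w)=0$ together with the decomposition (\ref{Qdecomp}) of $Q$ in terms of the Weyl tensor, in order to show that $P$ acts as a scalar on the hyperplane $(\nabla w)^{\perp}$ at $p$. Since the corollary preceding Lemma~\ref{lemmaEigen} already tells us that $\nabla w$ is an eigenvector of $P$ and that $(\nabla w)^{\perp}$ is $P$-invariant, this immediately determines the whole spectrum of $P$ at $p$.

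First I would record, from that corollary, the identity
\begin{equation*}
Q(\nabla w,Y,Z,\nabla w)=\frac{m+n-2}{m(n-1)}\,P(\nabla w,\nabla w)\,g(Y,Z)\qquad\text{for }Y,Z\perp\nabla w.
\end{equation*}
Then I would evaluate the right-hand side of (\ref{Qdecomp}) on the same four arguments. Since $g(\nabla w,Y)=g(\nabla w,Z)=0$, all the mixed terms in the two Kulkarni--Nomizu products drop out and one is left with
\begin{align*}
(P\odot g)(\nabla w,Y,Z,\nabla w)&=\tfrac12\bigl(|\nabla w|^{2}P(Y,Z)+P(\nabla w,\nabla w)\,g(Y,Z)\bigr),\\
(g\odot g)(\nabla w,Y,Z,\nabla w)&=|\nabla w|^{2}\,g(Y,Z),
\end{align*}
while the Weyl term vanishes by hypothesis. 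Equating the two expressions for $Q(\nabla w,Y,Z,\nabla w)$ and cancelling the common factor $(m+n-2)/m$, every term becomes a multiple of $g(Y,Z)$ except $\tfrac{1}{n-2}|\nabla w|^{2}P(Y,Z)$; solving for $P(Y,Z)$ gives
\begin{equation*}
P(Y,Z)=c\,g(Y,Z)\quad\text{for all }Y,Z\perp\nabla w,\qquad c=\frac{1}{n-1}\left(\mathrm{tr}(P)-\frac{P(\nabla w,\nabla w)}{|\nabla w|^{2}}\right),
\end{equation*}
a constant depending only on $p$.

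Consequently $P$ restricted to $(\nabla w)^{\perp}$ is $c$ times the identity, so at $p$ the tensor $P$ has the eigenvalue $c$ with multiplicity $n-1$ and the eigenvalue $\sigma:=P(\nabla w,\nabla w)/|\nabla w|^{2}$ with eigenvector $\nabla w$. If $\sigma=c$ then $P=c\,g$ at $p$ and there is just one eigenvalue; if $\sigma\neq c$ there are exactly two, $\sigma$ with multiplicity $1$ and eigenspace the line spanned by $\nabla w$, and $c$ with multiplicity $n-1$. Since $\mathrm{Ric}=P+\rho g$, the same statement holds for $\mathrm{Ric}$. I do not anticipate a real obstacle: essentially everything is the bookkeeping of contracting (\ref{Qdecomp}) against $\nabla w$ in its first and last slots, and the only delicate point is verifying that the mixed tangential/radial components of $P\odot g$ and $g\odot g$ collapse as stated. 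The substantive observation is simply that the vanishing radial Weyl curvature is exactly the obstruction preventing $P|_{(\nabla w)^{\perp}}$ from being a multiple of the identity.
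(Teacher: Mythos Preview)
Your proposal is correct and follows essentially the same approach as the paper: both arguments evaluate the decomposition (\ref{Qdecomp}) on $(\nabla w,Y,Z,\nabla w)$ with $Y,Z\perp\nabla w$, use the vanishing of the radial Weyl term, and equate with the identity $Q(\nabla w,Y,Z,\nabla w)=\frac{m+n-2}{m(n-1)}P(\nabla w,\nabla w)g(Y,Z)$ from the preceding corollary to conclude that $P$ is a scalar multiple of $g$ on $(\nabla w)^{\perp}$. Your explicit identification of the eigenvalue $c=\frac{1}{n-1}\bigl(\mathrm{tr}(P)-P(\nabla w,\nabla w)/|\nabla w|^{2}\bigr)$ matches the paper's final relation $(n-1)|\nabla w|^{2}P(Y,Z)=\bigl(\mathrm{tr}(P)|\nabla w|^{2}-P(\nabla w,\nabla w)\bigr)g(Y,Z)$.
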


\begin{proof}
We already know that $\nabla w$ is an eigenvector for $P$. Let $Y,Z \perp
\nabla w$. Since $W(\nabla w,Y,Z,\nabla w)=0$ by (\ref{Qdecomp}) we have
\begin{eqnarray*}
Q(\nabla w,Y,Z,\nabla w) &=&\frac{2(n+m-2)}{m(n-2)}(P\odot g)(\nabla
w,Y,Z,\nabla w) \\
&&\qquad -\frac{n+m-2}{m(n-1)(n-2)}\mathrm{tr}(P)(g\odot g)(\nabla
w,Y,Z,\nabla w) \\
&=&\frac{(n+m-2)}{m(n-2)}\left( P(\nabla w,\nabla w)g(Y,Z)+P(Y,Z)|\nabla
w|^{2}\right) \\
&&\qquad -\frac{n+m-2}{m(n-1)(n-2)}\mathrm{tr}(P)|\nabla w|^{2}g(Y,Z).
\end{eqnarray*}
On the other hand by the identity (\ref{divWQ}) we also have
\begin{equation*}
Q(\nabla w,Y,Z,\nabla w)=\frac{m+n-2}{m(n-1)}P(\nabla w,\nabla w)g(Y,Z).
\end{equation*}
Equating these equations gives us
\begin{equation*}
(n-1)P(Y,Z)|\nabla w|^{2}=\left( \mathrm{tr}(P)|\nabla w|^{2}-P(\nabla
w,\nabla w)\right) g(Y,Z)
\end{equation*}
which implies that $Y$ and $Z$ are eigenvectors for $P$ with the same
eigenvalue.
\end{proof}

\smallskip

Now we turn our attention to finishing the proof of the theorem. We have shown that the Schouten tensor, $S$, has at most two eigenvalues when $dw\neq 0$. Let $\sigma _{1}$ and $\sigma _{2}$ be the eigenvalue functions of $S$ and define
\begin{equation*}
O=\{x\in M:dw\neq 0,\sigma _{1}(x)\neq \sigma _{2}(x)\}.
\end{equation*}
First we prove a local result about the metric around points in $O$.

\begin{theorem}
\label{ThmLocal} Let $m>1$ and let $(M,g,w)$ be a $(\lambda, n+m)$-Einstein
metric such that $(M,g)$ has harmonic Weyl tensor and $W(\nabla w,\cdot
,\cdot ,\nabla w)=0$ in an open set containing $p \in O$. Then
\begin{eqnarray*}
g &=& \mathrm{d}t^2 + \psi^2(t)g_N \\
w&=& w(t).
\end{eqnarray*}
around $p$. Where $g_N$ is an Einstein metric. If the metric is locally
conformally flat in a neighborhood of $p$, then $N$ must be a space of
constant curvtaure.
\end{theorem}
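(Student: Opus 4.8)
The plan is to produce the warped product coordinates directly from the foliation of a neighborhood of $p$ by connected components of the regular level sets of $w$, using Lemma \ref{lemmaEigen} to see that these leaves are totally umbilic. Fix a connected open neighborhood $U\subseteq O$ of $p$. On $U$ we have $\nabla w\neq 0$, and by Lemma \ref{lemmaEigen} together with Remark \ref{remHess} the tensor $\mathrm{Ric}$ (equivalently $P$, equivalently $\mathrm{Hess}\,w$) has $\nabla w$ as an eigenvector and is a multiple of $g$ on $(\nabla w)^{\perp}$, while $|\nabla w|$ is constant on the connected components of the level sets of $w$. Since $|\nabla w|$ is then a function $\beta$ of the value of $w$ alone, the function $t$ determined by $dt=\beta(w)^{-1}dw$ satisfies $|\nabla t|\equiv 1$ and has the same level sets as $w$; hence, after shrinking $U$, we may write $g=dt^{2}+g_{t}$, where $g_{t}$ is the induced metric on the leaf $N_{t}=\{t=\mathrm{const}\}$ through $p$, and $w=w(t)$.

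The next step is to show that the leaves $N_{t}$ are totally umbilic with umbilicity factor depending on $t$ only. Writing $\nu=\nabla w/|\nabla w|=\partial_{t}$, for $X,Y$ tangent to $N_{t}$ the term involving $X(|\nabla w|^{-1})$ in $\nabla_{X}\nu$ vanishes because $|\nabla w|$ is leafwise constant, so the second fundamental form is $\mathrm{II}(X,Y)=|\nabla w|^{-1}\mathrm{Hess}\,w(X,Y)$. By Lemma \ref{lemmaEigen}, $\mathrm{Hess}\,w|_{TN_{t}}=\tfrac{w}{m}(\mu_{2}-\lambda)\,g|_{TN_{t}}$, where $\mu_{2}$ is the $(n-1)$-fold eigenvalue of $\mathrm{Ric}$, and hence $\mathrm{II}=h\,g|_{TN_{t}}$ with $h=w(\mu_{2}-\lambda)/(m|\nabla w|)$. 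That $h=h(t)$ follows by collecting earlier facts: $w$ and $|\nabla w|$ are leafwise constant; $\mathrm{Ric}(\nu,\nu)=\tfrac{m}{w}w''(t)+\lambda$ depends only on $t$ because $\mathrm{Hess}\,w(\partial_{t},\partial_{t})=w''(t)$; and $\rho$ is leafwise constant because $\tfrac{w}{2}\nabla\rho=P(\nabla w)$ is parallel to $\nabla w$, so $\mathrm{scal}$ and therefore $\mu_{2}=(\mathrm{scal}-\mathrm{Ric}(\nu,\nu))/(n-1)$ are leafwise constant. The first variation of the induced metric along the unit-speed normal flow then reads $\partial_{t}g_{t}=2\,\mathrm{II}_{t}=2h(t)\,g_{t}$, which integrates to $g_{t}=\psi^{2}(t)\,g_{t_{0}}$ with $\psi(t)=\exp\big(\int_{t_{0}}^{t}h\big)$ and $\psi(t_{0})=1$. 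Setting $g_{N}:=g_{t_{0}}$ gives $g=dt^{2}+\psi^{2}(t)g_{N}$ and $w=w(t)$.

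It remains to see that $(N,g_{N})$ is Einstein. For the warped product $dt^{2}+\psi^{2}(t)g_{N}$ the standard curvature identities (see \cite{Besse}) give, for $X,Y$ tangent to $N$, $\mathrm{Ric}_{g}(X,Y)=\mathrm{Ric}_{g_{N}}(X,Y)-\big(\psi\psi''+(n-2)(\psi')^{2}\big)g_{N}(X,Y)$; on the other hand $\mathrm{Ric}_{g}|_{TN_{t}}=\mu_{2}(t)\,g|_{TN_{t}}=\mu_{2}(t)\psi^{2}g_{N}$. Subtracting, $\mathrm{Ric}_{g_{N}}=f(t)\,g_{N}$ for a scalar function $f$, and since the left side is independent of $t$ the function $f$ must be a constant; thus $(N,g_{N})$ is Einstein. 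Finally, if $g$ is locally conformally flat near $p$, then in particular $\mathrm{div}\,W=0$ and $W(\nabla w,\cdot,\cdot,\nabla w)=0$, so the structure above applies; writing $dt^{2}+\psi^{2}g_{N}=\psi^{2}(ds^{2}+g_{N})$ with $ds=dt/\psi$ exhibits $g$ as conformal to the Riemannian product of an interval with $(N,g_{N})$, and such a product is conformally flat precisely when $g_{N}$ has constant sectional curvature. Hence $(N,g_{N})$ is a space form.

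The main obstacle I expect is the bookkeeping in the second paragraph: showing that the umbilicity factor is a function of $t$ alone requires simultaneously invoking the two-eigenvalue structure from Lemma \ref{lemmaEigen}, the rectifiability of $w$ from Remark \ref{remHess}, and the gradient identity $\tfrac{w}{2}\nabla\rho=P(\nabla w)$, and one must be careful to arrange that all of these hold uniformly on a single neighborhood $U\subseteq O$ rather than only at the point $p$. Once this is in place, the reparametrization, the ODE for $\psi$, and the warped product curvature computation are all routine.
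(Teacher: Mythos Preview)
Your proof is correct and follows essentially the same approach as the paper: use Lemma~\ref{lemmaEigen} and Remark~\ref{remHess} to get the two-eigenvalue structure and rectifiability, show the eigenvalues of $\mathrm{Hess}\,w$ are leafwise constant, integrate $\mathcal{L}_{\partial_t}g_t=2h(t)g_t$ to obtain the warped product, and then read off that $g_N$ is Einstein (constant curvature in the conformally flat case). The only notable difference is in how you establish that the tangential eigenvalue $\mu_2$ is constant along the leaves: the paper invokes the general Codazzi-tensor fact \cite[16.11(iii)]{Besse} applied to the Schouten tensor, whereas you compute it directly as $\mu_2=(\mathrm{scal}-\mathrm{Ric}(\nu,\nu))/(n-1)$ and use $\tfrac{w}{2}\nabla\rho=P(\nabla w)\parallel\nabla w$ together with $\mathrm{Hess}\,w(\partial_t,\partial_t)=w''(t)$; your route is slightly more self-contained but otherwise equivalent.
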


\begin{proof}
To fix notation let $\sigma_1$ be the eigenvalue of $S$ with eigenvector $ \nabla w$ and let $\sigma_2$ be the eigenvalue with eigenspace the the orthogonal complement of $\nabla w$. Since the dimension of eigenspace of $\sigma_2$ is bigger than one, 16.11(iii) in \cite{Besse} shows that $\sigma_2 $ is locally constant on the level sets of $w$ in $O$.

Using the $(\lambda, n+m)$-Einstein equation we see that $\mathrm{Hess}w$ also has at most two eigenvalues, call them $\mu_1$ and $\mu_2$ where the eigenspaces for $\mu_i$ correspond to those for $\sigma_i$ and $\mu_i$ and $\sigma_i$ are related by the formula
\begin{equation*}
\mu_i = \frac{w}{m} \left( \sigma_i + \frac{\mathrm{scal}}{2(n-1)} - \lambda
\right) \quad i=1, 2.
\end{equation*}

Now Remark \ref{remHess} shows that $\mu _{1}$ is locally constant on the
level sets of $w$ since
\begin{equation*}
\frac{1}{2}D_{\nabla w}|\nabla w|^{2} = \mathrm{Hess}w(\nabla w,\nabla w)
\end{equation*}
Moreover if $X \perp \nabla w$ then
\begin{equation*}
D_{X}\rho =\frac{2}{w}P(\nabla w,X)=0.
\end{equation*}
So $\mathrm{scal}$ is also locally constant on the level sets of $w$. This
implies that $\sigma _{1}$ is locally constant on the level sets of $w$.
Again using Remark \ref{remHess} we now have that $|\nabla w|^2$, $\mu_1$
and $\mu_2$ are all locally constant on the level sets of $w$.

Now we can write the metric in a neighborhood of $p$ as
\begin{equation*}
g=\frac{1}{|\nabla w|^{2}}\mathrm{d}w\otimes \mathrm{d}w+g_{w}
\end{equation*}
where $g_{w}$ is the metric on the level set. And we can write $\mathrm{Hess} w$ as
\begin{equation*}
\mathrm{Hess}w=\frac{\mu _{1}}{|\nabla w|^{2}}\mathrm{d}w\otimes \mathrm{d}w+\mu _{2}g_{w}
\end{equation*}
Where $\mu _{1}$ and $\mu _{2}$ are locally functions of $w$. In particular $\mathcal{L}_{\nabla w}g_{w}=\mu _{2}g_{w}$ and we see that the metric can be written as
\begin{equation*}
g=\frac{1}{|\nabla w|^{2}}\mathrm{d}w\otimes \mathrm{d}w+\psi ^{2}g_{w_{0}}
\end{equation*}
where
\begin{equation*}
\psi \left( w\right) =\exp \left( \int_{w_{0}}^{w}\mu _{2}\left( s\right)
\df s\right)
\end{equation*}

Now any metric of this form whose Ricci tensor has at most two eigenvalues
must have $g_{w_{0}}$ Einstein. Moreover, a metric in this form is
conformally flat if and only if $g_{w_{0}}$ has constant curvature.
\end{proof}

We can now obtain the global result by patching warped product pieces
together along geodesics.

\begin{theorem}
Let $m>1$ and suppose that $(M,g)$ is complete, simply connected and has harmonic Weyl tensor and $W(\nabla w,\cdot ,\cdot ,\nabla w)=0$, then $(M,g,w)$ is a non-trivial $(\lambda, n+m)$-Einstein metric if and only if it is of the form
\begin{eqnarray*}
g &=& \mathrm{d}t^2 + \psi^2(t)g_L \\
w &=& w(t),
\end{eqnarray*}
where $g_L$ is an Einstein metric. Moreover, if $\lambda\geq0$ then $(L,g_{L})$ has non-negative Ricci curvature, and if it is Ricci flat, then $\psi$ is a constant, i.e., $(M,g)$ is a Riemannian product.
\end{theorem}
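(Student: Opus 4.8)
The ``if'' direction is immediate --- metrics of the stated form already lie in the class under consideration --- so the real content is the forward implication, which I would obtain by globalizing Theorem~\ref{ThmLocal}. Suppose $(M,g,w)$ is non-trivial. Since $w$ is non-constant, $M_0=\{x:\nabla w(x)\neq 0\}$ is a non-empty open set, and on $M_0$ Lemma~\ref{lemmaEigen} gives that the Schouten tensor $S$ has at most two eigenvalues: $\sigma_1$, with eigenvector $\nabla w$, and $\sigma_2$, of multiplicity $n-1$. Set $O=\{x\in M_0:\sigma_1(x)\neq\sigma_2(x)\}$.

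First I would dispose of the degenerate case. If $\sigma_1\equiv\sigma_2$ on some connected component of $M_0$, then $S$, hence $\mathrm{Ric}$, is pointwise a multiple of $g$ there; by Schur's lemma $g$ is Einstein on that open set, and since $g$ is real analytic in harmonic coordinates it is Einstein on all of $M$. If the Einstein constant $\rho$ is different from $\lambda$, then Proposition~\ref{propkappaEinstein} identifies $(M,g,w)$ with an entry of Table~\ref{TablekappaEinstein}, each of which has the asserted shape with $g_L$ Einstein, and the Ricci-sign statements can be read off the table directly. If $\rho=\lambda$, then $\mathrm{Hess}\,w\equiv0$, so $\nabla w$ is a nonzero parallel field; completeness and simple connectedness give a de~Rham splitting $M=\mathbb{R}\times L$ with $g=\mathrm{d}t^2+g_L$, $w$ affine in $t$, and $g_L$ Einstein, while $\mathrm{Ric}(\partial_t,\partial_t)=0$ forces $\lambda=0$, $g_L$ Ricci-flat and $\psi\equiv1$ constant. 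Hence we may assume $O$ is non-empty, and by analyticity $O$ is then open and dense in $M_0$.

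The heart of the proof is to patch together the local warped-product descriptions of Theorem~\ref{ThmLocal}. On $M_0$ the unit field $\nu=\nabla w/|\nabla w|$ has geodesic integral curves: by Remark~\ref{remHess}, $\nabla w$ is an eigenfield of $\mathrm{Hess}\,w$, so $\nabla_\nu\nu$ is proportional to $\nu$, and being orthogonal to $\nu$ it vanishes. The proof of Theorem~\ref{ThmLocal} shows that $|\nabla w|^2$, the two eigenvalues of $\mathrm{Hess}\,w$, and $\mathrm{scal}$ are functions of $w$ alone, and that locally $g=\frac{1}{|\nabla w|^{2}}\mathrm{d}w\otimes\mathrm{d}w+\psi^{2}g_{w_{0}}$ with $g_{w_0}$ Einstein; moreover $\mathcal{L}_{\nu}g$ acts as a scalar function of $w$ on the level-set directions, so the level sets of $w$ are mutually homothetic. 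Using completeness, the $\nu$-geodesics extend for all time and sweep out $M$; using simple connectedness one shows that the regular level sets are connected and that the local splittings glue with no monodromy, so that globally $g=\mathrm{d}t^2+\psi^2(t)g_L$ and $w=w(t)$ on an interval of $t$-values, with $g_L$ Einstein, and at an endpoint where $\psi\to0$ smoothness of $g$ forces $g_L$ to be a round sphere, still Einstein. This globalization --- exhausting $M$ by the geodesic flow of $\nu$ and controlling it at interior critical points of $w$ and at $\partial M$ by means of simple connectedness --- is the step I expect to be the main obstacle.

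It remains to prove the Ricci-sign claims from the reduced equations. With $g=\mathrm{d}t^2+\psi^2(t)g_L$, $w=w(t)$ and $\mathrm{Ric}_{g_L}=\kappa g_L$, the $(\lambda,n+m)$-Einstein equation in directions tangent to $L$ becomes
\[
\kappa=\psi\psi''+(n-2)(\psi')^2+\lambda\psi^2+m\,\frac{\psi\psi'w'}{w},
\]
equivalently $\big(\psi^{n-2}\psi'w^m\big)'=\psi^{n-3}w^m(\kappa-\lambda\psi^2)$. If $\lambda>0$ then $M$ is compact (the converse to Theorem~\ref{compact}, proved above), so either $\psi$ attains a positive interior minimum, where $\psi'=0$ and $\psi''\geq0$ and hence $\kappa\geq\lambda\psi^2>0$, or $\psi$ vanishes at an endpoint, where $g_L$ is a round sphere and $\kappa>0$. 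If $\lambda=0$ one argues from the same reduced equations together with the above first integral and the conserved quantity $\mu=k w^2+(m-1)|\nabla w|^2$ of Proposition~\ref{propmu}, which is positive by Corollary~\ref{TableUTriangular}: since the scalar curvature of $g=\mathrm{d}t^2+\psi^2g_L$ contains the term $(n-1)\kappa/\psi^2$, the sign $\kappa<0$ is incompatible with $\mu$ remaining a positive constant as $w$ or $\psi$ degenerates at an end, so $\kappa\geq0$. Finally, if $\kappa=0$ then $\psi^{n-2}\psi'w^m$ is constant, and checking with the formula for $\mu$ forces this constant to vanish, so $\psi'\equiv0$ and $g$ is a Riemannian product.
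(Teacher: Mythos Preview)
Your globalization sketch follows the paper's strategy in spirit --- reduce to $O$ dense by analyticity and Proposition~\ref{propkappaEinstein}, then extend the local warped-product structure of Theorem~\ref{ThmLocal} along geodesics normal to a fixed level set --- and you correctly flag the patching as the delicate step. The paper carries this out by an open--closed argument on the set $A_+$ of distances to $L$ over which the warped form persists, which is more careful than your appeal to ``$\nu$-geodesics sweeping out $M$'' (note $\nu$ is undefined at critical points of $w$), but the underlying idea is the same.

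The substantive divergence, and the genuine gap, is in the Ricci-sign argument. Your first integral $\big(\psi^{n-2}\psi'w^m\big)'=\psi^{n-3}w^m(\kappa-\lambda\psi^2)$ is correct, and your $\lambda>0$ minimum argument is fine. But for $\lambda=0$ with $\kappa<0$, the sentence ``$\kappa<0$ is incompatible with $\mu$ remaining a positive constant as $w$ or $\psi$ degenerates at an end'' is an intuition, not a proof: nothing you have written rules out a complete noncompact solution with $\kappa<0$ and both $\psi$ and $w$ growing. Similarly, in the $\kappa=0$ case your first integral gives $\psi^{n-2}\psi'w^m\equiv C$; if $\partial M\neq\emptyset$ then indeed $C=0$ since $w\to0$ there, but when $\partial M=\emptyset$ you have not explained why ``checking with the formula for $\mu$'' forces $C=0$.

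The paper handles this very differently, via a trick you are missing: since the reduced ODEs depend on $L$ only through the Einstein constant $\kappa$, one may \emph{replace} $L$ by any other Einstein manifold with the same $\kappa$ without disturbing $\psi$ or $w$. For $\kappa<0$ one replaces $L$ by hyperbolic space; the resulting $(\lambda,n+m)$-Einstein manifold would then have exponential weighted-volume growth, contradicting the Bakry--Qian polynomial bound. For $\kappa=0$ one replaces $L$ by $\mathbb{R}^{n-1}$; compactness forces $\lambda=0$, and then either the Fang--Li--Zhang splitting theorem (no boundary) or a phase-plane analysis of the ODE system near $t=0$ (boundary case) shows $\psi$ is constant. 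This replacement idea is the key input your direct ODE approach lacks.
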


\begin{proof}
It is a direct calculation to see that any metric of the form $g = \mathrm{d} t^2 + \psi^2(t)g_L$ where $g_L$ is Einstein has harmonic Weyl tensor and satisfies $W(\nabla w,\cdot ,\cdot ,\nabla w)=0$ (see 16.26(i) in \cite{Besse}).

If $dw = 0$ in an open set of $M$, then $g$ is $\lambda$-Einstein in an open set and then trivial everywhere by analyticity. Similarly, if $\sigma_1 = \sigma_2$ in an open set, then by Schur's lemma $g$ would be $\rho$-Einstein. In this case, we already know by Proposition \ref{propkappaEinstein} that any metric which is both $\rho$-Einstein and non-trivially $(\lambda, n+m)$-Einstein satisfies the conclusion of the theorem. Therefore we can assume that the set $O$ is dense in $M$.

Choose an arbitrary point $p\in O$ and let $L$ be the connected component of the level set of $w$ that contains $p$. By Remark \ref{remHess} we know that $|\nabla w|^{2}\neq 0$ is constant on $L$ and therefore $L$ is a smooth hypersurface. Moreover, note that as $\left\vert \nabla w\right\vert ^{2}$, $\sigma _{1},$ and $\sigma _{2}$ are constant on $L,$ it follows that all accumulations points for $L$ lie in $O$ and hence also in $L.$ Thus $L$ is a closed subset of $M$ and in particular properly embedded.

As $M$ is simply connected, $L$ is two-sided. Let $t$ be the signed distance to $L$. We will work on the positive side of $L$, the other side will work in exactly the same way. Set
\begin{equation*}
A_{+}=\{a\in \mathbb{R}^{+}:g=\mathrm{d}t^{2}+\psi ^{2}(t)g_{L}\text{ and }w=w(t)\text{ on }t^{-1}([0,a])\}
\end{equation*}

First we would like to show that $A_{+}\neq \emptyset$. To this end let $q$ be a point in the  connected component of $O$ that contains $L$ with $d\left( L,q\right) =\varepsilon$. Let $L_{\varepsilon }$ be the connected component of a level set of $w$ that contains $q$.  We know that $L_{\varepsilon }$ is a properly embedded hypersurface and a connected component of $t^{-1}\left( \varepsilon \right) .$ Similarily it follows that $L$ is a connected component of the set of points that have distance $\varepsilon $ to $L_{\varepsilon }.$ Now suppose that $x\in t^{-1}\left(\varepsilon \right) .$ Then there is a minimal geodesic $\gamma _{1}:\left[0,\varepsilon \right] \rightarrow M$ with $\gamma _{1}\left( 0\right) \in L$ and $\gamma _{1}\left( \varepsilon \right) =x$. However we also know that $d\left( \gamma _{1}\left( 0\right) ,L_{\varepsilon }\right) =\varepsilon $ so there must also be a minimal geodesic $\gamma _{2}:\left[ 0,\varepsilon \right] \rightarrow M$ with $\gamma _{2}\left( 0\right) =\gamma _{1}\left( 0\right) $ and $\gamma _{2}\left( \varepsilon \right) \in L_{\varepsilon }$. Note that $L_{\varepsilon }$ and $t^{-1}\left( \varepsilon \right) $ are both on the same side of $L$ so it must follow that $\dot{\gamma}_{1}\left( 0\right) =\dot{\gamma}_{2}\left( 0\right) .$ Consequently $x\in L_{\varepsilon }$. This shows that $t$ is smooth on $t^{-1}\left( 0,\varepsilon \right) .$

If we define $g_{t}=g|_{L_{t}},$ then it follows as in Theorem \ref{ThmLocal}
that $\mathcal{L}_{\nabla t}g_{t}=\mu _{2}g_{t}$ and $g=\mathrm{d}t^{2}+\psi
^{2}\left( t\right) g_{t}$ for $t\in \left( 0,\varepsilon \right) .$ This
shows that $A_{+}\neq \emptyset$.

We now need to show that $A_{+} = \mathbb{R}^{+}$. First note that if $a \in A_{+}$ and $\psi(a) = 0$ then every normal geodesic on the positive side of $L$ must intersect when $t=a$ and therefore none of the geodesics can continue minimizing the distance to $L$ past $a$. By completeness, this
implies that $t^{-1}\left( [0,a] \right) = t^{-1} \left( [0, \infty) \right)$ and so we are done. Similarly if $a \in A_{+}$ and a point $(a, l) \in \partial M$, then since the set $\{a\} \times L$ is a level set for $w$, $\{a \} \times L$ must be a component of $\partial M$. Again we have $t^{-1}\left( [0,a] \right) = t^{-1} \left( [0, \infty) \right)$, so this case is finished.

Therefore, we can assume that $\psi(a) \neq 0$ and $(\{a\} \times L) \cap
\partial M = \emptyset$ for all $a \in A_{+}$. $A_+$ is non-empty and is
clearly closed. To finish we will show that $A_+$ is also open, and
therefore must be all of $\mathbb{R}^+$. To see this let $a \in A_+$ with
\begin{eqnarray*}
g &=& \mathrm{d}t^2 + \psi^2(t) g_L \quad t \in[0,a] \\
\psi(a) &\neq& 0 \\
w &=&w(t)
\end{eqnarray*}

Let $\Sigma = t^{-1} (a) \subset M$ and then $\Sigma$ is a smooth connected hypersurface which is equidistant to $L$. Therefore, if $\bar{t}$ is the signed distance to $\Sigma$, then $t = a + \bar{t}$. We can see by continuity that the second fundamental form and normal curvature to $\Sigma$ is constant, so $\bar{t}$ is smooth in a uniform tubular neighborhood of $\Sigma$ and so we have that $t$ is smooth on $t^{-1} ( [ 0, a + \varepsilon]) $. Applying Remark \ref{remHess} also shows that $w=w(t)$ for $t \in [0, a + \varepsilon]$.

Now we can choose $x \in O \cap t^{-1} \left( [a, a+\varepsilon] \right)$
and use the same argument as above to show that $g$ can be written as a
warped product along $t$ for an open dense subset of $[a, a+{\varepsilon}]$.
By smoothness of the metric and $t$ this implies that we have a warped
product along all of $t^{-1}([0,a + \varepsilon])$ and therefore $a+
\varepsilon \in A_+$.

To see that $g_L$ has positive Ricci curvature when $\lambda \geq 0$, we use that we know $g_{L}$ is Einstein. Let's assume the Einstein constant is $\kappa$. In case $\psi$ vanishes somewhere $L$ has to be a round sphere in order for $M$ to be a manifold, thus $\kappa>0$ in this case. Next we consider the situation where $\psi$ never vanishes. In this case we can switch the manifold $L$ without changing any of the equations as long as the new manifold has the same Einstein constant.

Suppose $\kappa<0$ and switch $L$ to be a hyperbolic space of Einstein constant $\kappa$. Thus we obtain a $(0,n+m)$-Einstein metric of the form
\begin{equation*}
\mathrm{d}t^{2}+\psi^{2}g_{H}
\end{equation*}
with $w$ as the same warping function. On this metric we consider the weighted volume form $w^{m}\psi^{n-1}\mathrm{d}t\wedge\mathrm{d}\mathrm{vol}_{H}$ where $\mathrm{d}\mathrm{vol}_{H}$ is the hyperbolic volume form on $H$. On one hand we know from \cite{BakryQian} that volume growth
with respect to this volume form is a power function of degree $\leq n+m$. On the other hand this is clearly not possible since the volume growth on $H$ is exponential. Specifically, if consider the weighted volume of the set $B(p,R)\cap\left([a,b]\times H\right)$ for a fixed interval $[a,b]$, then it is approximately the same as a fixed small constant times the volume of a ball in $H$.

Next assume $\kappa=0$. We can then replace $L$ with $\mathbb{R}^{n-1}$. This means that $\lambda=0$ as $M$ is compact when $\lambda>0$. If the manifold $M$ has no boundary, then the splitting theorems of $m$-Bakry Emery tensor \cite{FangLiZhang} tell us that the metric splits, i.e., $\psi$ is a constant function. In the following we assume that $t=0$ is the boundary. So we have $w(0)=0$ and $w^{\prime}(0)\ne0$. Since $g=\mathrm{d}t^{2}+\psi^{2}g_{L}$, the second fundamental form of the $t$-level hypersurface is given by $\frac{\psi^{\prime}}{\psi}g_{L}$, and $\psi^{\prime}(0)=0$. The $(0,n+m)$-Einstein equation is equivalent to the following
\begin{eqnarray*}
m\frac{w^{\prime\prime}}{w} & = & -(n-1)\frac{\psi^{\prime\prime}}{\psi}\\
m\frac{w^{\prime}}{w}\frac{\psi^{\prime}}{\psi} & = & -\frac{\psi^{\prime\prime}}{\psi}-(n-2)\left(\frac{\psi^{\prime}}{\psi}\right)^{2}
\end{eqnarray*}
If we define $x=\psi^{\prime}/\psi$ and $y=w^{\prime}/w$ then we obtain a system
\begin{eqnarray*}
x^{\prime} & = & -(n-1)x^{2}-mxy\\
y^{\prime} & = & -y^{2}+\frac{n-1}{m}(mxy+(n-2)x^{2})
\end{eqnarray*}
Note that $x=0$ and $y=a/t$ is a solution for all $a$. Thus no solution can cross the $y$-axis. Also note that the set $y>0$ is invariant as $y^{\prime}>0$ when $y=0$. This means that both the
first and second quadrants are also invariant. The goal is to show that the positive $y$-axis is the only solution such that $x(t)\rightarrow 0$ and $y(t)\rightarrow\infty$ as $t\searrow0$. Note that $x^{\prime}$ has the opposite sign of $x$ as long as $(n-1)x+my>0$. Thus any solution with $x(t_{0})\thickapprox0$ and $y(t_{0})>0$ will move away from the $y$-axis as $t\searrow0$, i.e., flowing backwards in time. This means that it cannot approach the $y$-axis as $t\searrow0$.

When $m$ is an integer we can prove this is in a more uniform fashion. We obtain a $\lambda$-Einstein metric
\begin{equation*}
g_{E}=\mathrm{d}t^{2}+\psi^{2}(t)g_{L}+w^{2}(t)g_{F}.
\end{equation*}
Thus the metric
\begin{equation*}
\bar{g}=\mathrm{d}t^{2}+w^{2}(t)g_{F}
\end{equation*}
is $(\lambda,(m+1)+(n-1))$-Einstein, where $\kappa$ is the Einstein constant of $g_{L}$. Then applying Corollary \ref{TableUTriangular} to $\bar{g}$, $g_{L}$ must have positive Ricci curvature if $\psi$ is not a constant.
\end{proof}

\begin{remark}
\label{remsplitting} It is also easy to see that there are some further
restrictions on which warped products are possible. When $\lambda >0$, the
manifold must be compact and when $\lambda \leq 0$ the metric can not have
compact quotients, this implies that there is no example which is a warped
product over a circle. Moreover, if one has a warped product on $(-\infty,
\infty) \times L$ then the metric clearly contains a \emph{line}. The
splitting theorem for the $m$-Bakry Emery tensor \cite{FangLiZhang} then
implies that any space of this form must be a trivial product when $\lambda
= 0$.
\end{remark}

\medskip

\appendix

\section{Warped Product Einstein spaces over surfaces}

The classification of $(\lambda, 2+m) $-Einstein spaces is discussed in \cite{Besse}. In this appendix, we add some of the details to the analysis of the equations that can be found there.

In dimension two it is shown in \cite{Besse} that the equation is equivalent to
\begin{eqnarray}  \label{eqnw2ndderivative}
2 ww^{\prime\prime}+ (m-1) (w^{\prime})^2 + \lambda w^2 = \mu
\end{eqnarray}

If $m=1$, then we have $\mu=0$ and the equation (\ref{eqnw2ndderivative}) is
integrated to
\begin{equation*}
(w^{\prime})^2 + \frac{\lambda}{2} w^2 =C
\end{equation*}
where $C$ is a constant. It is easy to see that $(M^2, g)$ has constant
curvature.

If $m>1$, then we multiply the equation (\ref{eqnw2ndderivative}) by the
integrating factor $w^{\prime}w^{m-2}$ and we obtain
\begin{eqnarray}  \label{eqnw1stderivative}
(w^{\prime})^2 = \frac{\mu}{m-1} - \frac{\lambda}{m+1} w^2 + C w^{1-m},
\end{eqnarray}
where $C$ is a constant.

When $C=0$, we obtain the various constant curvature spaces. If $\partial M \neq \emptyset$ then $C = 0$, because otherwise the right hand side of (\ref{eqnw1stderivative}) blows up as $w \rightarrow 0$. Therefore, we only get the constant curvature spaces when $\partial M \neq \emptyset$. If $\partial M = \emptyset$ and $M$ is compact then Theorem 1.2 in \cite{CSW} shows that $(M^2, g)$ is a trivial $(\lambda, m+2)$-Einstein manifold.

Next we assume that $C \ne 0$ and the manifold $M$ is non-compact without
boundary, i.e., $M = \mathbb{R}^2$.

From the equation (\ref{eqnw1stderivative}), we may assume that $w^{\prime}$
is non-negative and, if it vanishes at some point, say $t=0$, then $(t,u)$
is polar coordinates and $w^{\prime\prime}(0) = 1$. In this case, solving
the equation (\ref{eqnw2ndderivative}) at $t=0$ we have
\begin{equation}  \label{eqnwquartic}
\lambda \left(w(0)\right)^2 + 2 w(0) = \mu.
\end{equation}
If $\lambda = 0$, then we have $\mu >0 $ and $w(0) = \frac{1}{2}\mu$. By considering a multiple of $w$ if necessary, we may assume that $\mu = m-1$. The equation (\ref{eqnw1stderivative}) tells us that $C = -\left(\frac{m-1}{2}\right)^{m-1}$ and furthermore it is equivalent to the following system
\begin{equation*}
\left\{
\begin{array}{rcl}
v & = & w^{\prime} \\
v^2 & = & 1 + C w^{1-m}
\end{array}
\right.
\end{equation*}
The constant solution $w(t)=\frac{1}{2} (m-1)$ is a stationary point on the $vw$-phase plane. There is a unique trajectory with $v\geq 0$ and $w> 0$ that gives a non-trivial solution, see Example 9.118(a) in \cite{Besse}.

Next we assume that $\lambda < 0$. For a number $a > 0$ we consider $\tilde{w}(s) = lw(t)$ where $s = k t$, $k^2 = -\frac{\lambda}{m+1}$ and $l = \frac{a}{w(0)}$. Then the equation (\ref{eqnw2ndderivative}) becomes
\begin{equation*}
2 k^2 l^2 \tilde{w} \tilde{w}^{\prime\prime}+ k^2 l^2 (m-1)\left(\tilde{w}^{\prime}\right)^2 + l^2\lambda\tilde{w}^2 = \mu
\end{equation*}
i.e.,
\begin{equation}  \label{eqnwtilde2nd}
2 \tilde{w}\tilde{w}^{\prime\prime}+ (m-1)\left(\tilde{w}^{\prime}\right)^2
- (m+1)\tilde{w}^2 = \tilde{\mu},
\end{equation}
where $\tilde{\mu} = \frac{\mu}{k^2 l^2}$.

Let $\tilde{w}^{\prime\prime}(0) = \frac{1}{b}$ and, then the above equation at $s=0$ shows that
\begin{equation*}
\frac{1}{b} = \frac{m+1}{2} a + \frac{\tilde{\mu}}{2 a}.
\end{equation*}
If $\tilde{\mu}\geq 0$, then any positive $a$ gives a positive $b$. If $\tilde{\mu}< 0$, then the positivity of $b$ implies that
\begin{equation*}
\frac{m+1}{2}a + \frac{\tilde{\mu}}{2a} > 0
\end{equation*}
i.e.,
\begin{equation*}
a > \sqrt{\frac{-\tilde{\mu}}{m+1}}.
\end{equation*}
We integrate the equation (\ref{eqnwtilde2nd}) once and it gives
\begin{equation}  \label{eqnwtilde1st}
\left( \tilde{w}^{\prime}\right)^2 = \frac{\tilde{\mu}}{m-1} + \tilde{w}^2 +
C \tilde{w}^{1-m}
\end{equation}
where the constant $C$ is determined by $\tilde{w}^{\prime}(0) = 0$ and $\tilde{w}(0) = a$, and we have
\begin{equation*}
C = -\left(a^{m+1} + \frac{\tilde{\mu}}{m-1}a^{m-1}\right).
\end{equation*}
For fixed values of $a$ and $\tilde{\mu}$ there is a unique trajectory on the $\tilde{w}^{\prime}\tilde{w}$-plane with $\tilde{w}^{\prime}\geq 0$, $\tilde{w}> 0$ and $\tilde{w}(0) = a$ that gives us a unique $(-(m+1), 2+m)$-Einstein metric on $\mathbb{R}^2$, see Example 9.118(d) in \cite{Besse}.
\smallskip

Now we assume that $w^{\prime}$ is positive everywhere and then $(t,u)$ is Cartesian coordinates. By scaling $t$ and $w$ we may assume that $\mu = -(m-1), 0$, or $m-1$ and $\lambda = -(m+1)$ or $0$ in the equation (\ref{eqnw1stderivative}). Let $(a, b)$ be the range of $w$ with $w^{\prime}>0$
where $b > a > 0$ and $b$ may equal to $\infty$. The metric $g$ is complete if and only if the following two integrals diverge for a $w_0 \in (a,b)$
\begin{eqnarray*}
\int_a^{w_0} \frac{\mathrm{d} w}{\sqrt{\frac{\mu}{m-1}- \frac{\lambda}{m+1} w^2 + C w^{1-m}}} & = & \infty \\
\int_{w_0}^b \frac{\mathrm{d} w}{\sqrt{\frac{\mu}{m-1}- \frac{\lambda}{m+1} w^2 + C w^{1-m}}} & = & \infty.
\end{eqnarray*}

We consider the case when $\mu = 0$ and $\lambda = -(m+1)$. Then we have $(w^{\prime})^2 = w^2 + C w^{1-m}$. If $C > 0$, then the range of $w$ is $(0,\infty)$ and the integral from $0$ to any $w_0 >0$ converges. If $C< 0$, then the range of $w$ is $(a, \infty)$ with $a=(-C)^{\frac{1}{m+1}}$.
However the integral from $a$ to $w_0$ for any $w_0 > a$ converges. So the completeness of the metric implies that $C = 0$ and then $w = e^{t}$, see Example 9.118(b) in \cite{Besse}. Note that the warped product metric on $\mathbb{R}^2 \times F$ has Ricci curvature $-(m+1)$. The other cases follow similarly and they give the Example 9.118(c) in \cite{Besse}.

\medskip


\end{document}